\documentclass[nointlimits,11pt,oneside]{amsart}
\usepackage{amssymb,cases,enumitem, verbatim}
\usepackage{xcolor}
\usepackage{hyperref}
\usepackage[T1]{fontenc}
\usepackage[utf8]{inputenc}

\hypersetup{
	colorlinks=true,
	linkcolor=blue,
	citecolor=blue,
	filecolor=green,
	urlcolor=cyan,
	bookmarks=true
}

\makeatletter
\renewcommand*{\eqref}[1]{%
	\hyperref[{#1}]{\textup{\tagform@{\ref*{#1}}}}%
}
\makeatother

\setlist[enumerate,1]{label={\textup{(\roman*)}}}
\setlist[enumerate,2]{label={\textup{(\alph*)}}}

\usepackage[%
	a4paper,
	total={16cm,23cm},
	left=3cm, top=3cm,
	marginparsep=2pt]
{geometry}

\theoremstyle{plain}
\newtheorem{theorem}{Theorem}[section]

\newtheorem{corollary}[theorem]{Corollary}
\newtheorem{proposition}[theorem]{Proposition}

\theoremstyle{definition}
\newtheorem{remark}[theorem]{Remark}

\newtheorem{definition}[theorem]{Definition}
\newtheorem{example}[theorem]{Example}

\numberwithin{equation}{section}

\DeclareMathOperator*{\esssup}{ess\,sup}

\DeclareMathOperator*{\loc}{loc}

\DeclareMathOperator*{\supp}{supp}
\DeclareMathOperator*{\dist}{dist}

\newcommand{\norm}[1]{\left\lVert{#1}\right\rVert}
\newcommand{\abs}[1]{\left\lvert {#1}\right\rvert}

\newcommand{\N}{\mathbb{N}}
\newcommand{\R}{\mathcal{R}}
\newcommand{\M}{\mathcal{M}}

\begin{document}

\title{Amalgam approach to compactness in (quasi-)Banach function spaces}
\author{Dalimil Pe\v{s}a}
\address{Dalimil Pe\v{s}a, Department of Automation and Mathematics, Faculty of Electrical Engineering and Informatics, University of Pardubice, Studentská 95, 532 10 Pardubice, Czech Republic}
\email{dalimil.pesa@upce.cz}
\urladdr{0000-0001-6638-0913}

\author{Ivan Kotalík}
\address{Ivan Kotalík, Department of Mathematical Analysis, Faculty of Mathematics and
Physics, Charles University, Sokolovsk\'a~83,
186~75 Praha~8, Czech Republic}
\email{ivan.kotalik690@student.cuni.cz}
\urladdr{0009-0009-0128-2111}

\subjclass[2020]{46E30, 46A16}
\keywords{compactness, amalgam space, Banach function space, quasi-Banach function space, almost-compact embedding}

%\thanks{This research was supported by the grant...}

\begin{abstract}
We present amalgam-type characterisations of compactness in quasi-Banach function spaces. We further treat, in the same spirit, several related topics, namely convergence, (uniform) absolute continuity of the norm, and almost-compact embeddings. We also treat the case of rearrangement-invariant spaces and show that most of our conditions can be formulated in terms of the non-increasing rearrangement and the representation space.

\end{abstract}

\date{\today}

\maketitle

%I now know what this does.
\makeatletter
   \providecommand\@dotsep{2}
\makeatother
%\listoftodos\relax

\section{Introduction}

This paper is dedicated to examining compactness in quasi-Banach function spaces and several related topics from an amalgam point of view. The crux of the amalgam approach is to examine the ``local'' behaviour of functions separately from their ``global'' behaviour. Intuitively speaking, ``local'' behaviour is the behaviour on sets that are in some sense small, i.e.~the blow-ups of functions, while ``global'' behaviour is the behaviour ``near infinity'', i.e.~the decay of functions. The usefulness of this way of thinking in various settings has now been evident for a century, starting with the pivotal works of Wiener (\cite{Wiener26}, \cite{Wiener32}, \cite{Wiener33}) which were then followed by many others. For examples outside the setting of (quasi-)Banach function spaces see e.g.~the papers \cite{BusbySmith81}, \cite{Cooper60}, \cite{EdwardsHewitt77}, \cite{FeichtingerGrochenig92}, \cite{JakimovskiRusell80}, \cite{Szeptycki67} and also the surveys \cite{Feichtinger90}, \cite{Feichtinger92}, and \cite{Holland75}; for examples inside the said setting see e.g.~\cite{AlbericoCianchi2018}, \cite{Bathory18}, \cite{BennettRudnick80}, \cite{MusilovaNekvinda25}, \cite{OpicPick99}, and \cite{Vybiral07}. There are also several abstract frameworks available that allow for rigorous application of the amalgam ideas without resorting to ad~hoc constructions. We wish to mention primarily the now-classical Wiener amalgam spaces introduced in their full generality in \cite{Feichtinger83} and the more recent Wiener--Luxemburg amalgam spaces that were designed from the ground up to be compatible with the class of (rearrangement-invariant) quasi-Banach function spaces (i.e.~the setting of this paper) and that were introduced in \cite{Pesa22} (for the rearrangement-invariant case) and \cite{KotalikAmalgams} (for the general case).

As for the concept of compactness in general, we believe its importance and usefulness to be well-known, so let us restrict ourselves to discussing it in the context of quasi-Banach function spaces. In this setting, it can be shown that (roughly speaking) a subset $A$ of the absolutely continuous part of a space $X$ is precompact if and only if the following two conditions are satisfied:
\begin{itemize}[leftmargin=5mm]
    \item $A$ is compact in $\mu_{\loc}$, the topology of convergence in measure on sets of finite measure,
    \item $A$ has uniformly absolutely continuous (quasi)norm.
\end{itemize}
We refer to Section~\ref{SectionQBFS} for the relevant definitions and to Theorem~\ref{Thm:OldCharComp} for the precise statement. This result has its roots in \cite{LuxemburgZaanen62} where it was obtained for Banach function spaces, the full version for quasi-Banach function spaces has been obtained quite recently in \cite[Theorem~3.17]{CaetanoGogatishvili16}. This characterisation has found numerous applications, e.g.~\cite{Pustylnik06}, \cite{KermanPick08}, \cite{RafeiroSamko08}, and \cite{Slavikova15}. The purpose of this paper is to provide an amalgam version of the second condition, i.e.~to replace uniform absolute continuity of the (quasi)norm by separate conditions that provide uniform control of blow-ups and decay, respectively. Let us now present a simplified version of our main characterisation.

\begin{theorem} \label{Thm:CharCompLite}
    Let $(\mathcal{R}, \mu)$ be a $\sigma$-finite measure space, $\norm{\cdot}_X$ be a quasi-Banach function norm over this measure space, and let $X$ be the corresponding quasi-Banach function space. Let $A \subseteq X$ consist of functions having absolutely continuous quasinorm. Then the following statements are equivalent:
    \begin{enumerate}
        \item $A$ is precompact in $(X, \lVert \cdot \rVert_X)$.
        \item  The following three conditions are all satisfied:
        \begin{enumerate}\label{Thm:CharCompLite_ii}
            \item $A$ is precompact with respect to $\mu_{\loc}$, the topology of convergence in measure on sets of finite measure;
            \item \label{Thm:CharCompLite_iib} $$\lim_{n \to \infty} \sup_{\substack{E\subseteq \mathcal{R} \\ \mu(E)\leq n^{-1}}} \sup_{f \in A} \norm{f \chi_{E}}_X = 0;$$
            \item \label{Thm:CharCompLite_iic} $$\lim_{n \to \infty} \inf_{\substack{E\subseteq \mathcal{R}  \\ \mu(E)\leq n}} \sup_{f \in A} \norm{f \chi_{\mathcal{R} \setminus E}}_X = 0.$$
        \end{enumerate}
    \end{enumerate} 
\end{theorem}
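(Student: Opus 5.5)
The plan is to reduce everything to the classical characterisation, Theorem~\ref{Thm:OldCharComp}. That result says a set $A$ of functions with absolutely continuous quasinorm is precompact in $X$ if and only if it is precompact in $\mu_{\loc}$ and has uniformly absolutely continuous quasinorm. In this paper's setting, uniform absolute continuity means that for every sequence of measurable sets $E_k \downarrow \emptyset$ (up to null sets) one has $\sup_{f\in A}\norm{f\chi_{E_k}}_X \to 0$. Condition (a) appears verbatim in both characterisations. So the whole task is to show that, for a set $A$ of functions with absolutely continuous quasinorm, uniform absolute continuity of the quasinorm is equivalent to (b) together with (c). Throughout I will use the quasi-triangle inequality $\norm{f+g}_X\le C(\norm{f}_X+\norm{g}_X)$ and the lattice property.

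\emph{Uniform absolute continuity implies (b) and (c).} For (c), fix an exhausting sequence $\mathcal{R}_n\uparrow\mathcal{R}$ with $\mu(\mathcal{R}_n)\le n$. Such a sequence exists by $\sigma$-finiteness after reindexing and passing to subsets; the infimum in (c) is monotone in $n$, so reindexing is harmless. Then $\mathcal{R}\setminus\mathcal{R}_n\downarrow\emptyset$, and uniform absolute continuity gives $\sup_{f\in A}\norm{f\chi_{\mathcal{R}\setminus\mathcal{R}_n}}_X\to0$, which is (c).

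For (b) I argue by contradiction. Suppose there are $\varepsilon>0$, sets $E_k$ with $\mu(E_k)\le 2^{-k}$, and functions $f_k\in A$ with $\norm{f_k\chi_{E_k}}_X\ge\varepsilon$. Put $F_j=\bigcup_{k\ge j}E_k$. Then $F_j\downarrow F$ with $\mu(F)=0$, so $F_j\setminus F\downarrow\emptyset$. The lattice property gives $\norm{f_j\chi_{F_j\setminus F}}_X\ge\norm{f_j\chi_{E_j}}_X\ge\varepsilon$, since removing a null set does not change the norm. This contradicts uniform absolute continuity.

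\emph{(b) and (c) imply uniform absolute continuity.} This is the main step. Let $E_k\downarrow\emptyset$ and $\varepsilon>0$. By (c), choose $G$ of finite measure with $\sup_{f\in A}\norm{f\chi_{\mathcal{R}\setminus G}}_X<\varepsilon$. Since $\mu(G)<\infty$ and $E_k\cap G\downarrow\emptyset$, continuity of the measure gives $\mu(E_k\cap G)\to0$. Hence, by (b), $\sup_{f\in A}\norm{f\chi_{E_k\cap G}}_X<\varepsilon$ for all large $k$. Combining the two pieces with the quasi-triangle inequality,
\[
\norm{f\chi_{E_k}}_X\le C\bigl(\norm{f\chi_{E_k\cap G}}_X+\norm{f\chi_{E_k\setminus G}}_X\bigr)\le 2C\varepsilon
\]
uniformly in $f\in A$, which is uniform absolute continuity.

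The main obstacle I anticipate is matching the exact form of uniform absolute continuity used in Theorem~\ref{Thm:OldCharComp}. If it is phrased via sequences $E_k\to\emptyset$ $\mu$-a.e.\ rather than decreasing sequences, I would handle it the same way, after passing to $\sup_{j\ge k}$ tails. I would also check that $\mu_{\loc}$-precompactness has the same meaning in both statements, but that is a matter of definitions rather than a new argument.
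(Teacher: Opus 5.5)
Your proof is correct, but it is not the route the paper takes for this theorem. It is the alternative the paper mentions just before Theorem~\ref{Thm:CharConv} and then deliberately sets aside: Theorem~\ref{Thm:OldCharComp} combined with the equivalence ``uniform absolute continuity $\iff$ (b) and (c)''. Your three steps for that equivalence are essentially the paper's own proof of Theorem~\ref{Thm:uac_char}. The only change is that your $2^{-k}$ tail-union argument replaces the paper's extraction of an a.e.-convergent subsequence, and your reduction from a.e.\ convergence to decreasing sequences is sound.

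The paper instead proves the statement, as the (i)$\iff$(ii) part of Theorem~\ref{Thm:CharComp}, without using Theorem~\ref{Thm:OldCharComp}. It first characterises norm convergence of a single sequence in $X_a$ (Theorem~\ref{Thm:CharConv}):
\begin{itemize}
\item It takes the $\mu_{\loc}$-limit $f$ and uses the Fatou lemma to transfer the local and global bounds to $f$.
\item It estimates $\norm{f-f_k}_X$ by splitting $\mathcal{R}$ into three pieces: $\mathcal{R}\setminus E_0$; a small exceptional set $E_k\subseteq E_0$ where $\lvert f-f_k\rvert$ is large; and $E_0\setminus E_k$.
\end{itemize}
Sufficiency then follows by extracting $\mu_{\loc}$-convergent subsequences. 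Necessity follows by turning a failure of (b), or of the stronger condition (c'), into a sequence with no norm-convergent subsequence.

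What each route buys:
\begin{itemize}
\item Your route is shorter and cleanly separates the measure-theoretic bookkeeping from the analysis. However, all of the genuine compactness content is outsourced to the cited result of \cite{CaetanoGogatishvili16}.
\item The paper's route is self-contained and produces the sequential criterion of Theorem~\ref{Thm:CharConv} as a by-product. Combined with Theorem~\ref{Thm:uac_char}, it even gives an independent proof of Theorem~\ref{Thm:OldCharComp}.
\end{itemize}
The stronger global condition (c'), which the paper cites as its motivation for the direct proof, also comes for free in your approach. You simply apply uniform absolute continuity to the sets $\mathcal{R}\setminus\mathcal{R}_n$ for an arbitrary exhausting sequence $\mathcal{R}_n$.
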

The local condition in \ref{Thm:CharCompLite_ii}\ref{Thm:CharCompLite_iib} is as expected, given that the very same condition has been used before to characterise uniform absolute continuity of the quasinorm on spaces of finite measure, e.g.~in  \cite{RafeiroSamko08}. On the other hand, the global condition in \ref{Thm:CharCompLite_ii}\ref{Thm:CharCompLite_iic} is, to the best of our knowledge, new. However, we want to point out that this condition is closely related and directly inspired by the construction used in \cite{MihulaPandy25} to define the relation $\substack{\stackrel{*}{\hookrightarrow} \\ \infty}$ (although there is a fundamental difference between \ref{Thm:CharCompLite_iic} and the construction in said paper).

As it tuns out, Theorem~\ref{Thm:CharCompLite} is just one of a series of related results that appear quite naturally when working with the amalgam-type conditions. Similarly to the classical approach, a characterisation of compactness is closely related to the analogous characterisation of convergence, but thanks to the amalgam approach we can go deeper. Namely, we observe that the global condition \ref{Thm:CharCompLite_iic} may be replaced in the characterisation by the a~priori stronger condition
\begin{enumerate}[label=(c')]
    \item for an arbitrary sequence $\mathcal{R}_n$ of subsets of the underlying measure space $\mathcal{R}$ such that $\mu(\mathcal{R}_n) < \infty$ and $\mathcal{R}_n \nearrow \mathcal{R}$, we have that $$\lim_{n \to \infty} \sup_{f_k \in A} \lVert f_k \chi_{\mathcal{R} \setminus \mathcal{R}_n} \rVert_X = 0.$$
\end{enumerate}
We then consider the three conditions separately and show that each of them individually implies convergence in $(X, \lVert \cdot \rVert_X)$ for sequences that satisfy appropriate a~priori assumptions that are independent of $X$ and are similar in spirit to the convergence in $\mu_{\loc}$. With some caveats, those implications can also be reversed, meaning that the individual conditions are somewhat characterised by this transference of convergence. 

Another venue of inquiry is to consider the condition
\begin{enumerate}[label=(c'')]
    \item \label{c''} $$\lim_{n \to \infty} \sup_{f \in A} \inf_{\substack{E\subseteq \mathcal{R}  \\ \mu(E)\leq n}} \norm{f \chi_{\mathcal{R} \setminus E}}_X = 0.$$
\end{enumerate}
This order of the supremum and infimum is now the same as the one originally used in \cite{MihulaPandy25} and the resulting condition is substantially weaker than \ref{Thm:CharCompLite_iic} as now the excluded set may depend on $f$, not just on $n$. Interestingly enough, this condition can still be used to characterise convergence in $(X, \lVert \cdot \rVert_X)$, provided some reasonable a~priori assumptions on $X$ and the underlying measure space $(\mathcal{R}, \mu)$, but one has to work with the ``full'' convergence in measure, rather than with convergence in $\mu_{\loc}$. What makes this alternative approach especially interesting is the fact that the condition \eqref{c''} is truly amalgam in nature, meaning that it does not arise from an amalgam-type characterisation of a classical condition. This means that this method produces results and insights that would not be easily available without the amalgam approach.

Besides that, we also provide a range of related results that are similar in spirit and work with the above presented amalgam-type conditions or their variations. Namely, we characterise absolute continuity of the quasinorm as well as its uniform version and almost-compact embeddings. Finally, we also translate our work to the setting of rearrangement-invariant spaces, showing that most of our conditions (except, notably, \ref{Thm:CharCompLite_iic} as presented above) can be reformulated in the terms of the non-increasing rearrangement and the representation space. This may be interpreted as a representation-type result for our conditions.

The paper is structured as follows. We first provide the necessary theoretical background in Section~\ref{SecPreliminaries}. We then provide the characterisation of absolute continuity of the quasinorm in Section~\ref{SecACqN}. The core of the paper is Section~\ref{SecAmalgamType} where we state and prove our main results. To be more precise: Section~\ref{SecAmalgamTypeMain} contains the characterisations of convergence and compactness in the spirit of Theorem~\ref{Thm:CharCompLite} and the examination of the individual amalgam-type conditions. Section~\ref{SecAmalgamTypeAlternative} covers the alternative approach to the matter using the condition \ref{c''} and ``full'' convergence in measure. Section~\ref{SecAmalgamTypeUACqN} then presents the characterisations of uniformly absolute continuity of the quasinorm and that of almost-compact embeddings. Finally, Section~\ref{SectionRI} contains our treatment of rearrangement-invariant spaces.

\section{Preliminaries} \label{SecPreliminaries}

We start by introducing some elementary notation. By $(\mathcal{R},\mu)$ or $(\mathcal{S},\nu)$ we denote an arbitrary (totally) $\sigma$-finite measure space. For a $\mu$-measurable set $E\subseteq \mathcal{R}$, we denote its characteristic function as $\chi_E$. By $\M(\mathcal{R},\mu)$ we denote the set of all extended complex-valued $\mu$-measurable functions defined on $\mathcal{R}$. In this set, we identify functions that are equal $\mu$-almost everywhere. Furthermore, we denote the set of all non-negative functions in $\M(\mathcal{R},\mu)$ as $\M_+(\mathcal{R},\mu)$ and the set of functions in $\M(\mathcal{R},\mu)$ that are finite $\mu$-almost everywhere as $\M_0(\mathcal{R},\mu)$. We often shorten $\mu$-almost everywhere to $\mu$-a.e. When the measure space or measure are clear from the context (or irrelevant), we may just write $\mathcal{M}$, $\mathcal{M}_0$, $\mathcal{M}_+$ or a.e. In the particular case of the $n$-dimensional Lebesgue measure, we denote it as $\lambda^n$ or $\lambda$ in the one-dimensional case. We will also use the notation $m$ for the classical counting measure on $\mathbb{N}$; we also always consider $0 \in \mathbb{N}$.

For two topological linear spaces $X,Y \subseteq \mathcal{M}$, we say that $X$ is continuously embedded into $Y$, denoted as $X\hookrightarrow Y$, if we have $X \subseteq Y$ and the identity mapping $I\colon X \to Y$ is continuous.

To be completely precise, we define for $f \in \mathcal{M}$ (i.e.~an equivalence class of functions) the set $\supp f$ as the set
\begin{equation*}
    \supp f := \left \{ x \in \mathcal{R}; \; \left \lvert \overline{f} \right \rvert > 0 \right \},
\end{equation*}
where $\overline{f}$ is an arbitrary representative of $f$. Our arguments will at no point depend in any way on the choice of this representative. Notably, we always have $f = 0$ $\mu$-a.e.~on $\mathcal{R} \setminus \supp f$.

\subsection{Banach function norms and quasinorms}\label{SectionQBFS}

\begin{definition}
    Let $\norm{\cdot}\colon\M(\mathcal{R},\mu)\to [0,\infty]$ be a functional satisfying $\norm{\abs{f}}=\norm{f}$ for all $f\in\M$. We say that $\norm{\cdot}$ is a \emph{Banach function norm} if it satisfies the following for all $a\in \mathbb{C}$, $f,f_n,g\in\M_+$ and $E\subseteq \mathcal{R}$ such that $f_n\nearrow f$ a.e.~and $\mu(E)<\infty$:
    \begin{enumerate}[label=(P\arabic*)]
        \item \label{P1} it is a norm, i.e.~it satisfies
            \begin{enumerate}[label=(\alph*)]
                \item $\norm{af}=\abs{a}\norm{f}$,
                \item $\norm{f}=0$ if and only if $f=0$ a.e.,
                \item $\norm{f+g}\leq \norm{f}+ \norm{g}$,
            \end{enumerate}
        \item \label{P2} it has the lattice property, i.e.~if $f\leq g$ a.e., then $\norm{f}\leq\norm{g}$,
        \item \label{P3} it has the (strong) Fatou property, i.e.~$\norm{f_n}\nearrow\norm{f}$,
        \item \label{P4} it is non-trivial, i.e.~$\norm{\chi_E}<\infty$,
        \item \label{P5} its elements are locally integrable, i.e.~for the set $E$ there exists a constant $C_E>0$ such that $\int_E f \, d\mu \leq C_E \norm{f}$.
    \end{enumerate}
\end{definition}

Naturally, we use Banach function norms to define spaces.

\begin{definition}
    Let $\norm{\cdot}_X$ be a Banach function norm. We define the corresponding \emph{Banach function space} as the collection 
    \begin{equation*}
        X:=\left\{f\in\M(\mathcal{R},\mu); \;  \norm{f}_X < \infty \right\}.
    \end{equation*}
\end{definition}

As the axioms of Banach function norms (and spaces) may prove to be too restrictive, we introduce the much broader class of quasi-Banach function norms (and spaces).

\begin{definition}
    Let $\norm{\cdot}\colon\M(\mathcal{R},\mu)\to [0,\infty]$ be a functional satisfying $\norm{\abs{f}}=\norm{f}$ for all $f\in\M$. We say that $\norm{\cdot}$ is a \emph{quasi-Banach function norm} if it satisfies the axioms \ref{P2}, \ref{P3}, and \ref{P4} of Banach function norms and a weaker version of \ref{P1}, namely we have for all $f, g \in \mathcal{M}_+$ and all $a \in \mathbb{C}$
    \begin{enumerate}[label=(Q\arabic*)]
        \item \label{Q1} $\norm{\cdot}$ is a quasinorm, i.e.~it satisfies
            \begin{enumerate}[label=(\alph*)]
                \item $\norm{af}=\abs{a}\norm{f}$,
                \item $\norm{f}=0$ if and only if $f=0$ a.e.,
                \item \label{Q1c}  there exists a constant $C\geq1$ such that $\norm{f+g}\leq C(\norm{f}+ \norm{g})$.
            \end{enumerate}
    \end{enumerate}
\end{definition}

The smallest possible constant in \ref{Q1}\ref{Q1c} is usually called the modulus of concavity. 

\begin{definition}
    Let $\norm{\cdot}_X$ be a quasi-Banach function norm. We define the corresponding \emph{quasi-Banach function space} as the collection 
    \begin{equation*}
        X:=\left\{f\in\M(\mathcal{R},\mu); \;  \norm{f}_X < \infty \right\}.
    \end{equation*}
\end{definition}

For a detailed treatment of Banach function spaces we refer the reader to \cite[Chapters~1 and 2]{BennettSharpley88}; for an overview of quasi-Banach function spaces we recommend \cite{LoristNieraeth23, MusilovaNekvinda25, NekvindaPesa24}, and the references therein. A somewhat different approaches to the same principal idea are covered in \cite{KreinPetunin82} and \cite{Zaanen67}. Here we focus exclusively on the properties that are directly related to our work.

It will be useful to have a simple notation for a quasi-Banach function space equipped with the corresponding quasinormed topology.

\begin{definition}
    Let $\norm{\cdot}_X$ be a quasi-Banach function norm and let $X$ be the corresponding quasi-Banach function space. By $(X, \lVert \cdot \rVert_X)$ we denote the space $X$ equipped with the quasinormed topology induced by $\lVert \cdot \rVert_X$.
\end{definition}

Our characterisations of convergence and compactness will of course often work with  the space $\mathcal{M}_0$ equipped with the topology of convergence in measure on sets of finite measure.

\begin{definition}
    We shall denote by $(\mathcal{M}_0, \mu_{\loc})$ the space $\mathcal{M}_0(\mathcal{R}, \mu)$ of all (equivalence classes of) complex valued $\mu$-a.e.~finite functions equipped with the topology of convergence in measure on sets of finite measure.
\end{definition}

It is worth noting that both $(X, \lVert \cdot \rVert_X)$ and $(\mathcal{M}_0, \mu_{\loc})$ are completely metrisable topological vector spaces that in general fail to be locally-convex (see the books and papers listed above and the references therein).

The usefulness of $(\mathcal{M}_0, \mu_{\loc})$ originates in the following embedding. This result is classical for the normed case, see e.g.~\cite[Chapter~1, Theorem~1.4]{BennettSharpley88}; its extension to the case of quasi-Banach function spaces was provided in~\cite{NekvindaPesa24}.

\begin{theorem} \label{Thm:EmbToMeasure}
    Let $\norm{\cdot}_X$ be a quasi-Banach function norm and let $X$ be the corresponding quasi-Banach function space. Then 
    \begin{equation*}
        (X, \lVert \cdot \rVert_X) \hookrightarrow (\mathcal{M}_0, \mu_{\loc}).
    \end{equation*}
\end{theorem}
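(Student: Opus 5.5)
Since $(X,\lVert\cdot\rVert_X)$ and $(\mathcal{M}_0,\mu_{\loc})$ are both metrisable topological vector spaces and the identity is linear, it suffices to show that $\lVert f_n\rVert_X\to 0$ implies $f_n\to 0$ in $\mu_{\loc}$. By the definition of $\mu_{\loc}$, this means: for every $E\subseteq\mathcal{R}$ with $\mu(E)<\infty$ and every $\varepsilon>0$,
\begin{equation*}
    \mu\bigl(\{x\in E;\ |f_n(x)|>\varepsilon\}\bigr)\to 0 .
\end{equation*}
Before that, I will check that $X\subseteq\mathcal{M}_0$, i.e.\ every $f\in X$ is finite a.e.

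\emph{Finiteness a.e.} If $|f|=\infty$ on a set $F$ of positive measure, then by $\sigma$-finiteness $F$ contains a subset $G$ with $0<\mu(G)<\infty$. By \ref{P2} and homogeneity, $\lVert f\rVert_X\ge t\lVert\chi_G\rVert_X$ for every $t>0$. Since $\lVert\chi_G\rVert_X>0$ by \ref{Q1}(b), this gives $\lVert f\rVert_X=\infty$, a contradiction.

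\emph{Key lemma.} The plan is to replace the missing axiom \ref{P5} by the following statement. For every $E$ with $\mu(E)<\infty$ and every $\delta>0$ there is $\eta>0$ such that every measurable $G\subseteq E$ with $\mu(G)\ge\delta$ satisfies $\lVert\chi_G\rVert_X\ge\eta$. Granting this, convergence follows directly. Put $G_n=\{x\in E;\ |f_n|>\varepsilon\}$. By \ref{P2}, $\varepsilon\lVert\chi_{G_n}\rVert_X\le\lVert f_n\rVert_X\to0$. Hence eventually $\lVert\chi_{G_n}\rVert_X<\eta$, which forces $\mu(G_n)<\delta$.

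\emph{Proof of the lemma (the main obstacle).} I argue by contradiction. Suppose there are $G_k\subseteq E$ with $\mu(G_k)\ge\delta$ and $\lVert\chi_{G_k}\rVert_X\le 2^{-k}$, passing to a subsequence if necessary.
\begin{enumerate}
    \item Without a triangle inequality, sums cannot be estimated naively. I would use the Aoki--Rolewicz theorem: there are $r\in(0,1]$ and an equivalent quasinorm $\lVert\cdot\rVert$ with $\lVert f+g\rVert^r\le\lVert f\rVert^r+\lVert g\rVert^r$. Taking a sufficiently fast subsequence, I get $\sum_k\lVert\chi_{G_k}\rVert^r<\infty$.
    \item Consider $H_m=\bigcup_{k\ge m}G_k$. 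Finite unions are controlled by $r$-subadditivity. The Fatou property \ref{P3}, applied to increasing finite unions, gives $\lVert\chi_{H_m}\rVert\lesssim\bigl(\sum_{k\ge m}\lVert\chi_{G_k}\rVert^r\bigr)^{1/r}\to0$. I will need to check that the equivalent quasinorm inherits enough of the Fatou property; alternatively, one can apply \ref{P3} to the original quasinorm, with a constant depending only on $C$.
    \item The sets $H_m$ decrease, all lie in $E$, and satisfy $\mu(H_m)\ge\delta$. Since $\mu(E)<\infty$, their intersection $H$ has $\mu(H)\ge\delta>0$. By \ref{P2}, $\lVert\chi_H\rVert_X\le\lVert\chi_{H_m}\rVert_X\to0$, so $\lVert\chi_H\rVert_X=0$. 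This contradicts \ref{Q1}(b).
\end{enumerate}

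The delicate point is step (ii): controlling countable unions of characteristic functions using only the quasi-triangle inequality together with the Fatou property. This is where I expect the real work to lie.
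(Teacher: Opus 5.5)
The paper gives no proof of this statement. It cites \cite{BennettSharpley88} for the normed case, where \ref{P5} gives $\int_E\lvert f_n\rvert\,d\mu\le C_E\lVert f_n\rVert_X$ and Chebyshev's inequality finishes at once. For the quasi-Banach case, where \ref{P5} is not assumed, it defers to \cite{NekvindaPesa24}.

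Your argument is a correct, self-contained proof of the general case, and it takes a genuinely different route from the \ref{P5} one. Your key lemma is exactly what survives of \ref{P5} without local integrability. It is a non-quantitative, localised form of admissibility: a lower bound on $\lVert\chi_G\rVert_X$ for $G\subseteq E$ with $\mu(E)<\infty$ and $\mu(G)\ge\delta$. Unlike global admissibility, it holds for every quasi-Banach function norm, thanks to \ref{P3} and $\mu(E)<\infty$. Your deduction of convergence from the lemma is the same estimate the paper uses in the proof of Theorem~\ref{Thm:measureConvChar}. This also explains why only $\mu_{\loc}$, and not full convergence in measure, comes for free. The \ref{P5} route is one line and quantitative; yours trades that for full generality.

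In step (ii) there is no gap, but you do not need the Aoki--Rolewicz renorming. If you use it, you must check its lattice and Fatou properties, which do hold up to the equivalence constant. It is simpler to choose $G_k$ with $\lVert\chi_{G_k}\rVert_X\le(2C)^{-k}$, which the negation of the lemma permits, where $C$ is the modulus of concavity. Iterating \ref{Q1}\ref{Q1c} on the original quasinorm and using \ref{P2} then gives
\begin{equation*}
  \Bigl\lVert\chi_{\bigcup_{k=m}^{N}G_k}\Bigr\rVert_X\le\Bigl\lVert\sum_{k=m}^{N}\chi_{G_k}\Bigr\rVert_X\le\sum_{j=0}^{N-m}C^{j+1}(2C)^{-(m+j)}\le (2C)^{1-m}.
\end{equation*}
Then \ref{P3} transfers this bound to $\chi_{H_m}$, and your step (iii) concludes.

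The same tail-series estimate, applied directly to $\lvert f_{n_k}\rvert$ with $\lVert f_{n_k}\rVert_X\le(2C)^{-k}$, gives a Riesz--Fischer-type alternative. It shows $\sum_k\lvert f_{n_k}\rvert\in X$, hence the sum is finite a.e., so $f_{n_k}\to0$ a.e. The subsequence principle then yields convergence in measure on $E$. That version skips the detour through characteristic functions and produces an a.e.-convergent subsequence as a by-product. Yours instead isolates a structural fact about $\lVert\chi_G\rVert_X$.
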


As we know, the quasinorms of characteristic functions of sets having the same measure may vary when working with general quasi-Banach function spaces. Therefore, we cannot simply define a fundamental function as in rearrangement-invariant spaces (see e.g.~\cite[Chapter~2]{BennettSharpley88} for the classical case and all the relevant definitions). However, the concept of extremal fundamental functions that has been introduced recently in \cite{MihulaPandy25} overcomes this limitation.

\begin{definition} \label{Def:FundFunc}
    Let $\norm{\cdot}_X$ be a quasi-Banach function norm and let $X$ be the corresponding quasi-Banach function space. We define the \emph{minimal fundamental function} of $\norm{\cdot}_X$ as 
     \begin{align*}
        \varphi_X^{\min}(t) &:= \inf_{\substack{E \subseteq \mathcal{R} \\ \mu(E)=t}} \norm{\chi_E}_X,  &\text{for } t \text{ in the range of } \mu,
     \end{align*}
     and the \emph{maximal fundamental function} of $\norm{\cdot}_X$ as 
     \begin{align*}
        \varphi_X^{\max}(t) &:= \sup_{\substack{E \subseteq \mathcal{R} \\\mu(E)=t}} \norm{\chi_E}_X, &\text{for } t \text{ in the range of } \mu.
     \end{align*}

     If it holds for some $t \in [0, \infty)$ that $\varphi_X^{\max}(t) = \varphi_X^{\min}(t)$ then we denote this common value as $\varphi_X(t)$ and call it the \emph{fundamental function} of $\lVert \cdot \rVert_X$.
\end{definition}

\begin{remark} \label{Rem:FundMono}
    If the measure space $(\mathcal{R}, \mu)$ is either non-atomic or completely atomic with all atoms having the same measure, then it is rather easy to see that the minimal and maximal fundamental functions are non-decreasing.
\end{remark}

An interesting result is that on non-atomic measure spaces there are uniform bounds on $\varphi_X^{\min}(t)$ and $\varphi_X^{\max}(t)$. This was first shown in~\cite[Lemma 4.4]{Slavikova12} for Banach function spaces, but the original proof, mutatis mutandis, still works. The generalisation to the currently presented form comes from \cite[Proposition 3.2]{MihulaPandy25}; we note that the result is trivial if $\mu(\mathcal{R}) < \infty$ as then $\varphi_X^{\max} \leq \lVert \chi_{\mathcal{R}} \rVert_X < \infty$.

\begin{theorem}\label{lenka4.4}
    Let $(\mathcal{R}, \mu)$ be non-atomic, let $\norm{\cdot}_X$ be a quasi-Banach function norm, and let $X$ be the corresponding quasi-Banach function space. Let $t\in(0,\mu(\mathcal{R}))$. Then $\varphi_X^{\max}(t) < \infty$. Furthermore, if $\norm{\cdot}_X$ satisfies \ref{P5}, then $\varphi_X^{\min}(t) > 0$.
\end{theorem}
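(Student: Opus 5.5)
The plan is to argue by contradiction, using non-atomicity to split sets and the Fatou property \ref{P3} to pass to limits.

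\textbf{Finiteness of $\varphi_X^{\max}(t)$.} If $\mu(\mathcal{R})<\infty$, then $\varphi_X^{\max}(t)\leq\norm{\chi_{\mathcal{R}}}_X<\infty$ by \ref{P2} and \ref{P4}, so assume $\mu(\mathcal{R})=\infty$. Suppose $\varphi_X^{\max}(t)=\infty$ for some $t>0$. Then there are sets $E_k$ with $\mu(E_k)=t$ and $\norm{\chi_{E_k}}_X\geq 4^k$ (a sufficiently large power of the modulus of concavity).

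First we localise the blow-up. Fix $\sigma$-finite exhausting sets $\mathcal{R}_n\nearrow\mathcal{R}$ of finite measure. By \ref{P3}, $\norm{\chi_{E\cap\mathcal{R}_n}}_X\nearrow\norm{\chi_E}_X$, so the large norm of $\chi_{E_k}$ is already seen inside some $\mathcal{R}_n$. Now split each $E_k$, using non-atomicity, into finitely many pieces of small measure; by the quasi-triangle inequality at least one piece carries a comparable share of the norm.

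The key construction is a set $F=\bigcup_j F_j$ with $\mu(F)\leq t$, where the $F_j$ are disjoint, $\mu(F_j)\leq 2^{-j}t$, and $\norm{\chi_{F_j}}_X\geq j$. Then \ref{P2} gives $\norm{\chi_F}_X=\infty$. Enlarging $F$ to a set of measure exactly $t$, again by non-atomicity, keeps the norm infinite. Since $t$ is finite, it remains to contradict \ref{P4}.

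The way to do this is to arrange all the $F_j$ inside one set $G$ of finite measure; then $\norm{\chi_G}_X\geq\norm{\chi_F}_X=\infty$ contradicts \ref{P4}. This is the main obstacle: sets of measure $t$ with huge norm may escape to infinity. We handle it by a dichotomy.

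\emph{(a)} For some $n$, $\sup\{\norm{\chi_{E}}_X : E\subseteq\mathcal{R}_n,\ \mu(E)\leq t\}=\infty$. This contradicts \ref{P4} directly, since each such $\chi_E$ is bounded by $\chi_{\mathcal{R}_n}$ and so $\norm{\chi_E}_X\leq\norm{\chi_{\mathcal{R}_n}}_X<\infty$. So case (a) is impossible, and we always have
\begin{equation*}
M_n:=\sup\{\norm{\chi_{E}}_X : E\subseteq \mathcal{R}_n,\ \mu(E)\leq t\}\leq\norm{\chi_{\mathcal{R}_n}}_X<\infty .
\end{equation*}

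\emph{(b)} Since $M_n<\infty$ for every $n$, a set of measure $t$ with norm much larger than $M_n$ must have a part outside $\mathcal{R}_n$ carrying most of the norm. Here we apply the quasi-triangle inequality to $\chi_E=\chi_{E\cap\mathcal{R}_n}+\chi_{E\setminus\mathcal{R}_n}$. Iterating, we choose $n_1<n_2<\dots$ and sets $H_j\subseteq\mathcal{R}_{n_{j+1}}\setminus\mathcal{R}_{n_j}$ with large norms, using Fatou to cut each $H_j$ off at a finite level. The $H_j$ are then disjoint, but their measures are each around $t$, not summable.

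To make the measures summable, use non-atomicity to shrink each $H_j$ to measure $2^{-j}t$ while keeping norm at least $j$. Split $H_j$ into $2^j$ pieces of equal measure; by the quasi-triangle inequality some piece has norm at least $\norm{\chi_{H_j}}_X/(C^{j}2^j)$, where $C$ is the modulus of concavity. Hence we need $\norm{\chi_{H_j}}_X\geq j(2C)^j$, which is available because the norms were unbounded.

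Then $F=\bigcup_j F_j$ has $\mu(F)\leq t$ and $\norm{\chi_F}_X=\infty$. We take any $E\supseteq F$ with $\mu(E)=t$. To contradict this we must use more than \ref{P4}, which only concerns finite-measure sets: here $E$ has finite measure and $\norm{\chi_E}_X=\infty$, which violates \ref{P4} for $E$ itself. So this is the contradiction, and case (b) is impossible as well.

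\textbf{Positivity of $\varphi_X^{\min}(t)$ under \ref{P5}.} Suppose $\varphi_X^{\min}(t)=0$. Take $E_k$ with $\mu(E_k)=t$ and $\norm{\chi_{E_k}}_X\leq 4^{-k}$. By \ref{P5} this only controls the mass of $E_k$ inside a fixed finite-measure set, so the $E_k$ must escape to infinity.

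Pass to a subsequence with $E_k$ essentially disjoint from $\mathcal{R}_{n_k}$ and contained in $\mathcal{R}_{n_{k+1}}$ up to a small measure; by the lattice property we may simply discard the remainder. Put $f=\sum_k 2^k\chi_{E_k}$. Fatou combined with the quasi-triangle inequality gives $\norm{f}_X<\infty$ once the decay $4^{-k}$ is chosen faster than $C^k2^k$. Applying \ref{P5} on a suitable finite-measure set then gives a contradiction.

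The delicate point is again that \ref{P5} is local. So instead I would take $f=\sum_k\chi_{E_k}$ with the $E_k$ disjoint, and compare it with a fixed set of measure $t$ placed inside $\bigcup_k E_k$. The expected main obstacle is exactly this global-to-local transfer; following \cite[Lemma 4.4]{Slavikova12}, I would use non-atomicity to rearrange pieces of $E_k$ into a single finite-measure set on which \ref{P5} applies.
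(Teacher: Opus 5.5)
The genuine gap is in the second part: you never actually derive the contradiction with \ref{P5}. Your first sketch stops at ``a suitable finite-measure set''. The second sketch (comparing $f=\sum_k\chi_{E_k}$ with a set of measure $t$ inside $\bigcup_kE_k$) is too vague to check, and its natural reading only yields $t\le C_G\norm{f}_X$, which contradicts nothing. The obstacle you name, that \ref{P5} is ``local'', is a misreading. The axiom holds, with some constant $C_G$, for \emph{every} set $G$ of finite measure, however spread out, and this is exactly the missing idea. By non-atomicity, choose $G_k\subseteq E_k$ with $\mu(G_k)=2^{-k}t$ and put $G=\bigcup_kG_k$, so that $\mu(G)\le t<\infty$. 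Applying \ref{P5} on this single set to $\chi_{G_k}$, and using \ref{P2}, gives $2^{-k}t=\int_G\chi_{G_k}\,d\mu\le C_G\norm{\chi_{G_k}}_X\le C_G\norm{\chi_{E_k}}_X\le C_G4^{-k}$ for every $k$, which is impossible. No subsequence, disjointness, Fatou property or auxiliary function is needed. Your $f=\sum_k2^k\chi_{E_k}$ would also work with this $G$, since then $\int_Gf\,d\mu\ge\sum_k2^k\mu(G_k)=\infty$. However, you never identify $G$ and instead defer to \cite{Slavikova12}. That is also where the paper sends the reader (together with \cite{MihulaPandy25}); it gives no proof of its own.

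The first part is correct in substance. Choose $E$ with $\mu(E)=t$ and $\norm{\chi_E}_X\ge j(2C)^j$, split it into $2^j$ pieces of measure $2^{-j}t$, and keep a piece $F_j$ with $\norm{\chi_{F_j}}_X\ge j$. Then $F=\bigcup_jF_j$ has $\mu(F)\le t$ and $\norm{\chi_F}_X=\infty$, contradicting \ref{P4} for $F$ itself. The same misreading inflates this part, though. The ``main obstacle'' of sets escaping to infinity, the dichotomy (a)/(b), the shells $\mathcal{R}_{n_{j+1}}\setminus\mathcal{R}_{n_j}$ and the Fatou cut-offs are all unnecessary, because \ref{P4} applies to any set of finite measure and $F$ is one. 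The sentence saying one ``must use more than \ref{P4}'' contradicts your own conclusion and should be removed.
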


Of course, the positivity of $\varphi_X^{\max}(t)$ for $t > 0$ is immediate from the axioms (namely \ref{Q1}). As it turns out, it will be sometimes necessary to restrict ourselves to spaces where $\varphi_X^{\min}(t)$ is also positive for $t>0$.

\begin{definition}
    Let $\norm{\cdot}_X$ be a quasi-Banach function norm and let $X$ be the corresponding quasi-Banach function space. 
    We say that $\norm{\cdot}_X$ is \emph{admissible} if $\varphi_X^{\min}(t)>0$ for all $t>0$ in the range of $\mu$.
\end{definition}

Another property of quasi-Banach function spaces that is deeply related to the topic at hand is the absolute continuity of the quasinorm, which is studied in~\cite[Chapter 1.3]{BennettSharpley88}. We will also need its uniform version. 

\begin{definition}
     Let $\norm{\cdot}_X$ be a quasi-Banach function norm and let $X$ be the corresponding quasi-Banach function space. A function $f\in X$ is said to have \emph{absolutely continuous quasinorm} in $X$ if for every sequence $E_n \subseteq \mathcal{R}$ satisfying $\chi_{E_n}\to 0$ a.e.~it holds that
     \begin{equation*}
        \lim_{n \to \infty} \norm{f\chi_{E_n}}_X = 0.
     \end{equation*}
     We denote the subspace of all functions with absolutely continuous quasinorm in $X$ as $X_a$. If it is that $X=X_a$, we say that the space $X$ has \emph{absolutely continuous quasinorm}. 

     We further say that a set $A \subseteq X$ has \emph{uniformly absolutely continuous quasinorm} in $X$ if for every sequence $E_n \subseteq \mathcal{R}$ satisfying $\chi_{E_n}\to 0$ a.e.~it holds that
     \begin{equation*}
        \lim_{n \to \infty} \sup_{f \in A} \norm{f\chi_{E_n}}_X = 0.
     \end{equation*}
\end{definition}

The following proposition is easily verified, in a manner virtually identical to that of \cite[Chapter~1, Proof of Theorem~3.8]{BennettSharpley88}.

\begin{proposition} \label{PropXaOrdId}
	The set $X_a$ is an order ideal in $X$, i.e.~it is a closed (with respect to the quasinorm $\lVert \cdot \rVert_X$) linear subspace satisfying that whenever $f \in X_a$ and $\lvert g \rvert \leq \lvert f \rvert$ $\mu$-a.e.~then also $g \in X_a$.
\end{proposition}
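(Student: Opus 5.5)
We have to verify four things: $X_a$ is a linear subspace, it is solid (an order ideal in the lattice sense), and it is closed in $(X, \lVert \cdot \rVert_X)$. We follow the scheme of \cite[Chapter~1, Theorem~3.8]{BennettSharpley88}, replacing the triangle inequality by \ref{Q1}\ref{Q1c} throughout.

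\emph{Solidity.} Let $f \in X_a$ and $\lvert g \rvert \leq \lvert f \rvert$ $\mu$-a.e. The lattice property \ref{P2} gives $\lVert g \rVert_X \leq \lVert f \rVert_X < \infty$, so $g \in X$. For any sequence $E_n$ with $\chi_{E_n} \to 0$ a.e., we have $\lvert g \chi_{E_n} \rvert \leq \lvert f \chi_{E_n} \rvert$, and therefore \ref{P2} yields $\lVert g\chi_{E_n}\rVert_X \leq \lVert f\chi_{E_n}\rVert_X \to 0$.

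\emph{Linearity.} Homogeneity follows at once from \ref{Q1}(a). For $f, g \in X_a$ we have $\lvert (f+g)\chi_{E_n}\rvert \leq \lvert f\rvert\chi_{E_n} + \lvert g\rvert \chi_{E_n}$. Combining \ref{P2}, the identity $\lVert h \rVert = \lVert \lvert h \rvert \rVert$ and \ref{Q1}\ref{Q1c}, we get
\[
\lVert (f+g)\chi_{E_n}\rVert_X \leq C\bigl(\lVert f\chi_{E_n}\rVert_X + \lVert g\chi_{E_n}\rVert_X\bigr) \to 0 .
\]
The same estimate with $E_n = \mathcal{R}$ shows $f+g \in X$.

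\emph{Closedness.} This is the only step needing a little care, since the quasi-triangle constant must not be iterated. Let $f_k \in X_a$ with $\lVert f_k - f\rVert_X \to 0$, where $f \in X$, and let $\chi_{E_n} \to 0$ a.e. For each $k$ and $n$, the lattice property and \ref{Q1}\ref{Q1c} give
\[
\lVert f\chi_{E_n}\rVert_X \leq C\bigl(\lVert (f-f_k)\chi_{E_n}\rVert_X + \lVert f_k\chi_{E_n}\rVert_X\bigr) \leq C\lVert f-f_k\rVert_X + C\lVert f_k\chi_{E_n}\rVert_X .
\]
Given $\varepsilon > 0$, first fix $k$ with $C\lVert f-f_k\rVert_X < \varepsilon$. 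Then let $n \to \infty$, using $f_k \in X_a$. This gives $\limsup_n \lVert f\chi_{E_n}\rVert_X \leq \varepsilon$, so $f \in X_a$.

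Only one application of the quasi-triangle inequality is used in each step, so the modulus of concavity causes no difficulty. No earlier result beyond the axioms \ref{P2} and \ref{Q1} is needed.
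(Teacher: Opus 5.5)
Your proof is correct and is the adaptation the paper intends: the paper gives no separate argument, saying only that the proposition follows ``in a manner virtually identical'' to \cite[Chapter~1, Proof of Theorem~3.8]{BennettSharpley88}. Your verification is that scheme with the triangle inequality replaced by a single use of \ref{Q1}\ref{Q1c} per step (solidity from \ref{P2}, linearity and closedness from the quasi-triangle inequality), and checking closedness along sequences suffices because the quasinormed topology is metrisable.
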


The relationship to compactness is captured by the result obtained in \cite[Theorem~3.17]{CaetanoGogatishvili16} (with the case of Banach function spaces being known as early as \cite{LuxemburgZaanen62}).

\begin{theorem} \label{Thm:OldCharComp}
    Let $\norm{\cdot}_X$ be a quasi-Banach function norm and let $X$ be the corresponding quasi-Banach function space. Let $A \subseteq X_a$. Then $A$ is precompact in $(X, \lVert \cdot \rVert_X)$ if and only if it is precompact in $(\mathcal{M}_0, \mu_{\loc} )$ and has uniformly absolutely continuous quasinorm.
\end{theorem}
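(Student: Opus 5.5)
The plan is to prove both directions directly, using sequential characterisations. In the metrisable spaces $(X,\lVert\cdot\rVert_X)$ and $(\mathcal{M}_0,\mu_{\loc})$, a set is precompact if and only if every sequence in it has a Cauchy subsequence (both spaces are complete). I will use the quasi-triangle inequality with modulus of concavity $C$, the lattice property \ref{P2}, and the embedding of Theorem~\ref{Thm:EmbToMeasure}.

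\emph{Necessity.} Let $A$ be precompact in $(X,\lVert\cdot\rVert_X)$. By Theorem~\ref{Thm:EmbToMeasure} the identity map is continuous, and it is linear. Hence every $\lVert\cdot\rVert_X$-Cauchy subsequence is also $\mu_{\loc}$-Cauchy, so $A$ is precompact in $(\mathcal{M}_0,\mu_{\loc})$. For uniform absolute continuity, fix $\varepsilon>0$ and take a finite $\varepsilon$-net $f_1,\dots,f_k\in A$. Let $E_n$ satisfy $\chi_{E_n}\to 0$ a.e. For $f\in A$ choose $i$ with $\lVert f-f_i\rVert_X<\varepsilon$. The lattice property then gives
\[
\lVert f\chi_{E_n}\rVert_X\le C\bigl(\varepsilon+\max_{1\le i\le k}\lVert f_i\chi_{E_n}\rVert_X\bigr).
\]
Since each $f_i\in A\subseteq X_a$, the maximum tends to $0$ as $n\to\infty$. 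Hence $\limsup_n\sup_{f\in A}\lVert f\chi_{E_n}\rVert_X\le C\varepsilon$, and $\varepsilon$ is arbitrary.

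\emph{Sufficiency.} Let $(f_k)\subseteq A$. By $\mu_{\loc}$-precompactness we pass to a subsequence that is Cauchy in measure on every set of finite measure. Fix $\mathcal{R}_n\nearrow\mathcal{R}$ with $\mu(\mathcal{R}_n)<\infty$, using $\sigma$-finiteness. Via the Riesz subsequence theorem on each $\mathcal{R}_n$ and a diagonal argument, we pass to a further subsequence that converges $\mu$-a.e.\ on $\mathcal{R}$. We then show this subsequence is $\lVert\cdot\rVert_X$-Cauchy. Fix $\varepsilon>0$ and proceed in three steps.
\begin{enumerate}
\item The sets $\mathcal{R}\setminus\mathcal{R}_n$ satisfy $\chi\to0$ a.e., so uniform absolute continuity yields $N$ with $\sup_{f\in A}\lVert f\chi_{\mathcal{R}\setminus\mathcal{R}_N}\rVert_X<\varepsilon$.
\item On the finite-measure set $\mathcal{R}_N$ we apply Egorov's theorem. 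This gives a decreasing sequence $F_m\subseteq\mathcal{R}_N$ with $\mu(F_m)<2^{-m}$ such that $(f_k)$ converges uniformly on $\mathcal{R}_N\setminus F_m$ for every $m$. Since $\chi_{F_m}\to0$ a.e., uniform absolute continuity again gives some $m$ with $\sup_{f\in A}\lVert f\chi_{F_m}\rVert_X<\varepsilon$.
\item On $G=\mathcal{R}_N\setminus F_m$, uniform convergence gives $|f_k-f_l|\chi_G\le\delta\chi_{\mathcal{R}_N}$ for all $k,l$ large. By \ref{P2} and \ref{P4} this yields $\lVert(f_k-f_l)\chi_G\rVert_X\le\delta\lVert\chi_{\mathcal{R}_N}\rVert_X<\varepsilon$ for a suitable $\delta$.
\end{enumerate}
Now split $f_k-f_l$ over $G$, $F_m$ and $\mathcal{R}\setminus\mathcal{R}_N$. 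Applying the quasi-triangle inequality twice, and once more to each difference on the bad sets, gives $\lVert f_k-f_l\rVert_X\le C\varepsilon+2C^3\varepsilon+2C^3\varepsilon$ for $k,l$ large. So the subsequence is Cauchy, and $A$ is precompact because $X$ is complete.

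The main obstacle is the Egorov step. Uniform absolute continuity is formulated only for sequences of sets with $\chi_{E_n}\to0$ a.e., not for sets of small measure. So the exceptional sets must be chosen decreasing, with summable measures, so that they genuinely satisfy this hypothesis. The passage from $\mu_{\loc}$-Cauchy to a.e.\ convergence on all of $\mathcal{R}$ also requires the diagonal argument noted above.
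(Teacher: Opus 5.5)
Your proof is correct and complete in both directions. In particular, you rightly insist that the Egorov exceptional sets be nested, so that $\chi_{F_m}\to 0$ a.e.\ and uniform absolute continuity, which is phrased via a.e.\ convergence of characteristic functions, actually applies.

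The paper does not prove this statement itself; it quotes it from \cite[Theorem~3.17]{CaetanoGogatishvili16}. It can, however, be recovered from the paper's own results, which are proved without appealing to it. One combines the equivalence (i)$\Leftrightarrow$(ii) of Theorem~\ref{Thm:CharComp} with Theorem~\ref{Thm:uac_char}.

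That route differs from yours mainly in the sufficiency part.
\begin{itemize}
\item Theorem~\ref{Thm:uac_char} first splits uniform absolute continuity into the local condition $\sigma_X A(n^{-1})\to 0$ and the global condition $\delta_X A(n)\to 0$. The local condition controls \emph{every} set of small measure.
\item The proof of Theorem~\ref{Thm:CharConv} then uses convergence in measure on one fixed set $E_0$ of finite measure. The set where $\lvert f-f_k\rvert>\varepsilon\lVert\chi_{E_0}\rVert_X^{-1}$ only needs to have small measure. So neither the Riesz--diagonal extraction of an a.e.\ convergent subsequence nor Egorov's theorem is needed. Your nested Egorov sets essentially re-derive by hand what the implication ``uniformly absolutely continuous $\Rightarrow$ $\sigma_X A(n^{-1})\to 0$'' provides.
\item The paper also compares $f_k$ with the $\mu_{\loc}$-limit $f$, using Fatou's lemma to transfer the bounds to $f$ and to get $f\in X$. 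You instead show the subsequence is Cauchy and invoke completeness of $X$.
\end{itemize}

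For necessity, the paper argues by contradiction through Theorem~\ref{Thm:CharConv}, controlling finitely many functions in $X_a$ together with the limit. This is the sequential analogue of your finite $\varepsilon$-net argument.

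Your argument is the more elementary and self-contained classical one. The paper's detour through $\sigma_X A$ and $\delta_X A$ buys finer amalgam information: separate local and global conditions, the strengthened global condition in part (iii) of Theorem~\ref{Thm:CharComp}, and one-sided convergence results such as Theorems~\ref{Thm:ConvLoc} and~\ref{Thm:ConvGlob}.
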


\subsection{Non-increasing rearrangement}

In Section~\ref{SectionRI}, we will treat the case of rearrangement-invariant spaces, whence we now provide the necessary background. As our focus is the general case, we will keep the exposition to a minimum. To the reader interested in a more in-depth exposition, we recommend the books~\cite{BennettSharpley88} or \cite{KreinPetunin82} and, for the case of quasinorms, the papers~\cite{MusilovaNekvinda25} and~\cite{NekvindaPesa24} and the references therein.

\begin{definition}
    Let $f\in \M(\mathcal{R}, \mu)$. We define the \emph{distribution function} of $f$ as the function
    \begin{equation*}
        f_*(s):= \mu(\{x\in \mathcal{R}; \; \abs{f(x)}>s\}), \ s\in[0,\infty).
    \end{equation*}
    We further define the \emph{non-increasing rearrangement} of $f$ as the function
    \begin{equation*}
        f^*(t):= \inf\{s\in [0,\infty); \; f_*(s)\leq t\}, \ t\in[0,\infty).
    \end{equation*}
\end{definition}

The basic properties of these objects can be found in~\cite[Chapter 2, Proposition 1.3]{BennettSharpley88} and~\cite[Chapter 2, Proposition 1.7]{BennettSharpley88}. We consider these properties well-known enough to omit them and use them without explicit reference.
\begin{definition}
    We say that two functions $f\in\M(\mathcal{R}, \mu)$ and $g\in \M(S,\nu)$ are \emph{equimeasurable} if $f_*=g_*$.
\end{definition}

\begin{definition}
    Let $\norm{\cdot}_X$ be a quasi-Banach function norm and $X$ its corresponding quasi-Banach function space. If for every $f,g\in\M(\mathcal{R}, \mu)$ we have
    \begin{enumerate}[label=(P\arabic*)] \setcounter{enumi}{5}
        \item \label{P6} $\norm{f}_X=\norm{g}_X$ whenever $f_*=g_*$,
    \end{enumerate}
    we say that $\norm{\cdot}_X$ and $X$ are \emph{rearrangement-invariant}, abbreviated to r.i.
\end{definition}

As it turns out, r.i.~spaces work well only if we assume that the underlying measure space has some nice properties. For our purposes, the following definition will be sufficient:

\begin{definition}
    We say that a $\sigma$-finite measure space $(\mathcal{R}, \mu)$ is resonant if it is either non-atomic or completely atomic with atoms having equal measure.
\end{definition}

We note that usually a different definition is provided and the formulation we used is then proved to be a characterisation, see e.g.~\cite[Chapter~2, Section~2]{BennettSharpley88}.

The property \ref{P6} has deep consequences, one of which is the following representation theorem. Essentially, we can replace an r.i.~quasinorm over any resonant measure space with a quasinorm over $([0, \infty), \lambda)$. The original result for Banach function norms is the classical Luxemburg representation theorem; see \cite{Luxemburg67} for the original result and \cite[Chapter 2, Theorem 4.10]{BennettSharpley88} for a modern presentation. The version for quasi-Banach function norms originated in~\cite[Theorem 3.1]{MusilovaNekvinda25} with a slight modification to the below-presented form provided in \cite[Section~3]{Pesa25}. 

\begin{theorem}
    Let $(\mathcal{R}, \mu)$ be a resonant measure space and let $\norm{\cdot}_X$ be an r.i.~quasi-Banach function norm on $\M(\mathcal{R}, \mu)$. Then there exists an r.i.~quasi-Banach function norm $\norm{\cdot}_{\overline{X}}$ on $\M([0,\infty), \lambda)$ such that for all $f\in\M(\mathcal{R}, \mu)$ it holds $\norm{f}_X=\norm{f^*}_{\overline{X}}$.
    
    Furthermore, $\norm{\cdot}_{\overline{X}}$ has the property (P5) whenever $\norm{\cdot}_X$ does. Finally, if $(\mathcal{R}, \mu)$ is non-atomic and of infinite measure, then $\norm{\cdot}_{\overline{X}}$ is uniquely determined.
\end{theorem}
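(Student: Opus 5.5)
The plan is to construct $\norm{\cdot}_{\overline{X}}$ explicitly by transporting functions between $(\mathcal{R},\mu)$ and $([0,\infty),\lambda)$ through a measure-preserving map. I would treat separately the non-atomic case and the completely atomic case with atoms of common measure $c$. In each case the axioms of a quasi-Banach function norm and \ref{P6} are then checked for the constructed functional.

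\emph{Non-atomic case.} First construct a measure-preserving map $\sigma\colon \mathcal{R}\to[0,\mu(\mathcal{R}))$. Using non-atomicity and $\sigma$-finiteness, build an increasing family of measurable sets $(E_t)_{0\le t<\mu(\mathcal{R})}$ with $\mu(E_t)=t$, via a Sierpiński-type argument on dyadic rationals followed by taking unions and intersections. Then put $\sigma(x)=\inf\{t;\, x\in E_t\}$. For any $g\in\M([0,\infty))$, the function $g^*\circ\sigma$ is equimeasurable with $g^*\chi_{[0,\mu(\mathcal{R}))}$. Define
\[
\norm{g}_{\overline{X}} := \norm{g^*\circ\sigma}_X + \norm{g^*\chi_{[\mu(\mathcal{R}),\infty)}}_{L^\infty},
\]
where the second term is simply absent if $\mu(\mathcal{R})=\infty$. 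For $f\in\M(\mathcal{R},\mu)$, the function $f^*$ vanishes on $[\mu(\mathcal{R}),\infty)$ and $f^*\circ\sigma$ is equimeasurable with $f$. Hence \ref{P6} gives $\norm{f^*}_{\overline{X}}=\norm{f}_X$. Rearrangement invariance of $\overline{X}$ holds by construction. The lattice property follows from monotonicity of $g\mapsto g^*$. The Fatou property follows from $g_n\nearrow g \Rightarrow g_n^*\nearrow g^*$ together with \ref{P3} for $X$ and for $L^\infty$. Non-triviality \ref{P4} reduces to $\norm{\chi_F}_X<\infty$ for some $F\subseteq\mathcal{R}$ with $\mu(F)=\min(t,\mu(\mathcal{R}))$; this holds by \ref{P4} for finite $\mu(F)$, or by Theorem~\ref{lenka4.4}.

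The main obstacle is the quasi-triangle inequality, since $(g+h)^*\le g^*(\cdot/2)+h^*(\cdot/2)$ requires a dilation estimate. Using $\sigma$, I would show that the dilation $g^*\mapsto g^*(\cdot/2)$ is bounded on $X$ with constant $2C$. To do so, split $\mathcal{R}$ into two halves via $\sigma$, namely the preimages of the even and odd "dyadic pieces", or more simply two disjoint sets each carrying a copy of $g^*\circ\sigma$ compressed by a factor $2$. Each piece is equimeasurable with a function dominated by $g^*\circ\sigma$, so \ref{Q1} and \ref{P6} give the bound. The $L^\infty$ tail term is handled trivially.

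\emph{Atomic case.} Here $\mathcal{R}\cong\N$ with the counting measure scaled by $c$. Define $\norm{g}_{\overline{X}}:=\norm{(g^*(cn))_{n\in\N}}_X$. If $f$ is a function on $\mathcal{R}$, then $f^*$ is a step function whose value on $[cn,c(n+1))$ equals $f^*(cn)$, so $\norm{f^*}_{\overline{X}}=\norm{f}_X$. For the quasi-triangle inequality, use $(g+h)^*(cn)\le g^*(c\lfloor n/2\rfloor)+h^*(c\lfloor n/2\rfloor)$. Then bound the doubling operator $(a_0,a_1,\dots)\mapsto(a_0,a_0,a_1,a_1,\dots)$ on $X$ by $2C$, by writing its image as the sum of two sequences supported on the even and odd indices, each equimeasurable with $a$. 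The remaining axioms are checked as in the non-atomic case.

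\emph{Remaining claims.} For \ref{P5}, on a set of finite measure $t$ we have $\int_0^t g^*\,d\lambda \le \int_F (g^*\circ\sigma)\,d\mu$ plus a tail part, where $F$ is a set of measure $t$. Since the tail is controlled by the $L^\infty$ term, local integrability is inherited from \ref{P5} for $X$. For uniqueness when $\mu$ is non-atomic and $\mu(\mathcal{R})=\infty$, observe that every $g^*$ on $[0,\infty)$ equals $f^*$ for $f=g^*\circ\sigma$. Therefore any r.i.\ $\norm{\cdot}_{\overline{X}}$ satisfying the representation identity must agree with $\norm{g}_{\overline{X}}=\norm{g^*}_{\overline{X}}=\norm{g^*\circ\sigma}_X$.
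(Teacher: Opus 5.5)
The paper does not prove this statement; it quotes it from \cite[Theorem~3.1]{MusilovaNekvinda25}, as modified in \cite[Section~3]{Pesa25}. Your direct construction, transporting functions along a measure-preserving map $\sigma$, is a sound self-contained route. In the non-atomic case with $\mu(\mathcal{R})=\infty$, your own uniqueness argument shows it must coincide with the cited functional. Two points need repair.

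First, the dilation step. Your first version, read literally, does not work. It cuts the function $x\mapsto g^*(\sigma(x)/2)$ along $\sigma^{-1}(P)$ and $\sigma^{-1}(Q)$, where $P$, $Q$ are disjoint with $P\cup Q=[0,\infty)$. Both pieces would need distribution functions at most $g_*$, and these two distribution functions add up to $2g_*$. This forces $\lambda(P\cap[0,x))=x/2$ for every $x$ in the range of $2g_*$. For strictly decreasing $g^*$ that means $P$ has density $1/2$ everywhere, contradicting Lebesgue's density theorem.

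Your second version is the right one once made precise:
\begin{itemize}
\item Split $\mathcal{R}$ into disjoint sets $\mathcal{R}_1,\mathcal{R}_2$ with $\mu(\mathcal{R}_i)=\mu(\mathcal{R})/2$.
\item Take measure-preserving maps $\sigma_i\colon\mathcal{R}_i\to[0,\mu(\mathcal{R})/2)$ and put $u=\sum_i (g^*\circ\sigma_i)\chi_{\mathcal{R}_i}$.
\item Then $\mu(\{u>r\})=\min\{2g_*(r),\mu(\mathcal{R})\}$, so $u$ is equimeasurable with $g^*(\cdot/2)\chi_{[0,\mu(\mathcal{R}))}$.
\item Each summand is equimeasurable with $g^*\chi_{[0,\mu(\mathcal{R})/2)}$, which gives the constant $2C$ at once.
\end{itemize}
This is the exact continuous analogue of your even/odd doubling in the atomic case. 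When $\mu(\mathcal{R})<\infty$, the $L^\infty$ tail term is not quite trivial either. You need $g^*(\mu(\mathcal{R})/2)\leq \norm{g^*\circ\sigma}_X/\varphi_X(\mu(\mathcal{R})/2)$, which follows from the lattice property.

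Second, a completely atomic resonant space may have only finitely many atoms, say $N$. Then $\mathcal{R}\not\cong\N$. You must use $(g^*(cn))_{n<N}$ together with a tail term such as $g^*(cN)$, controlled as in the finite non-atomic case.

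On the choice of functional where it is not unique: sampling $g^*$ at the points $cn$ is valid for the statement as given. Property (P5) even holds automatically, since $\int_0^{cN}g^*\,d\lambda\leq cN\,g^*(0)$ and $g^*(0)\norm{\chi_{\{x_0\}}}_X\leq\norm{g}_{\overline{X}}$ for any atom $x_0$.

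However, your functional is not the canonical one of \cite[Definition~3.1]{Pesa25} that the paper fixes afterwards, and the difference matters later. With your functional, $\norm{f^*\chi_{[0,n^{-1})}}_{\overline{X}}=f^*(0)\norm{\chi_{\{x_0\}}}_X$ as soon as $n^{-1}<c$. The equivalence \eqref{Cor:RepreDecays_i} in Corollary~\ref{Cor:RepreDecays} would therefore fail for every $f\neq 0$: its left-hand side holds trivially, but the right-hand side does not.

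The paper's remark that in the atomic case the right-hand side becomes a condition on $\int_0^{n^{-1}}f^*\,d\lambda$ indicates that the canonical functional sees $g^*$ on each interval $[cn,c(n+1))$ through integral means rather than point values. Integral means are still compatible with $\norm{f^*}_{\overline{X}}=\norm{f}_X$, because $f^*$ is constant on those intervals. Unlike sampling, they do not force $\overline{X}\hookrightarrow L^\infty$.
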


Note that in the cases when the representation quasinorm is not uniquely determined, some results may depend on the precise choice. For that reason, we will always use the symbol $\norm{\cdot}_{\overline{X}}$ to denote the functional constructed in \cite[Definition~3.1]{Pesa25}.

In rearrangement-invariant spaces, the characteristic functions of sets of given fixed finite measure all have the same norm. Hence, the functions from Definition~\ref{Def:FundFunc} satisfy $\varphi_X^{\max}(t) = \varphi_X^{\min}(t) = \varphi_X(t)$ for every $t$ in the range of $\mu$ and we have $\varphi_X(t)=\norm{\chi_{[0,t]}}_{\overline{X}}$. The right-hand side of this formula can then be used to extend $\varphi_X$ onto the entire set $[0, \infty)$.

\section{Absolute continuity of the quasinorm} \label{SecACqN}

As compactness is closely related to the absolute continuity of the quasinorm (see Theorem~\ref{Thm:OldCharComp}), it seems that providing an amalgam-type characterisation of this property would be a good place to start. We first have to introduce two functionals that we will employ throughout the paper. We would like to point out again that their construction is inspired by the work conducted in \cite{MihulaPandy25}. Furthermore, the utility of these functionals seems to exceed the scope of this paper, as illustrated e.g.~by \cite{KotalikAmalgams}.

\begin{definition}\label{Def:maximal_minimal_norm}
    Let $\norm{\cdot}_X$ be a quasi-Banach function norm and let $X$ be the corresponding quasi-Banach function space. We define the \emph{maximal local norm} of $f\in X$
    \begin{align*}
        \sigma_Xf(t) &:=\sup_{\substack{E\subseteq \mathcal{R}  \\ \mu(E)\leq t}} \norm{f \chi_{E}}_X, &\text{for } t \in (0, \infty),
    \end{align*}
    and the \emph{minimal global norm} of $f\in X$
    \begin{align*}
        \delta_Xf(t) &:=\inf_{\substack{E\subseteq \mathcal{R}  \\ \mu(E)\leq t}} \norm{f \chi_{\mathcal{R} \setminus E}}_X, &\text{for } t \in (0, \infty).
    \end{align*}
\end{definition}

\begin{remark} \label{Rem:SpecialMu}
    Clearly, if $(\mathcal{R}, \mu)$ is such that
    \begin{equation*}
        \alpha := \inf_{\substack{E\subseteq \mathcal{R}  \\ E \neq \emptyset}} \mu (E) > 0,
    \end{equation*}
    (such as e.g.~in $(\mathbb{N}, m)$) then $\sigma_Xf(t) = 0$ for $t < \alpha$, regardless of $f$. Similarly, if $\mu(\mathcal{R}) < \infty$, then $\delta_Xf(t) = 0$ for $t > \mu(\mathcal{R})$, regardless of $f$.
\end{remark} 

\begin{theorem}\label{Thm:ac_char}
    Let $\norm{\cdot}_X$ be a quasi-Banach function norm and let $X$ be the corresponding quasi-Banach function space. Let $f\in X$. Then $f\in X_a$ if and only if the following two conditions hold:
    \begin{equation}\label{local}
        \lim_{n\rightarrow\infty} \sigma_Xf\left(n^{-1}\right) = 0,
    \end{equation}
    and
    \begin{equation}\label{global}
        \lim_{n\rightarrow\infty} \delta_Xf(n) = 0.
    \end{equation}
\end{theorem}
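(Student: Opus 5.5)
For the forward implication, let $f\in X_a$. To prove \eqref{local}, I argue by contradiction. If it fails, there are $\varepsilon>0$ and sets $E_n$ with $\mu(E_n)\leq 2^{-n}$ and $\norm{f\chi_{E_n}}_X\geq\varepsilon$; such sets exist because $\sigma_Xf$ is non-decreasing, so the failure persists along every subsequence. Put $F_n:=\bigcup_{k\geq n}E_k$. Then $\mu(F_n)\leq 2^{1-n}$ and $F_n$ is decreasing, so $\chi_{F_n}\to 0$ a.e. Absolute continuity of the quasinorm gives $\norm{f\chi_{F_n}}_X\to 0$. But $\norm{f\chi_{F_n}}_X\geq \norm{f\chi_{E_n}}_X\geq\varepsilon$ by \ref{P2}, a contradiction.

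For \eqref{global}, I use $\sigma$-finiteness: fix sets $\mathcal{R}_n\nearrow\mathcal{R}$ with $\mu(\mathcal{R}_n)<\infty$. Then $\chi_{\mathcal{R}\setminus\mathcal{R}_n}\to 0$ everywhere, so $\norm{f\chi_{\mathcal{R}\setminus\mathcal{R}_n}}_X\to0$. Given $n$, pick $k$ with $\mu(\mathcal{R}_k)\leq n$ as large as possible; then $\delta_Xf(n)\leq\norm{f\chi_{\mathcal{R}\setminus\mathcal{R}_{k}}}_X$. The index $k$ tends to infinity as $n\to\infty$ because each $\mu(\mathcal{R}_k)$ is finite. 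Moreover $\delta_Xf$ is non-increasing, so the limit exists and equals $0$.

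For the converse, assume \eqref{local} and \eqref{global}, and let $\chi_{E_n}\to0$ a.e. Fix $\varepsilon>0$. By \eqref{global}, choose $G$ with $\mu(G)\leq N$ and $\norm{f\chi_{\mathcal{R}\setminus G}}_X<\varepsilon$. By \eqref{local}, choose $\eta>0$ with $\sigma_Xf(\eta)<\varepsilon$. The sets $E_n\cap G$ satisfy $\chi_{E_n\cap G}\to0$ a.e. on a set of finite measure, but this is not enough for their measures to tend to zero; I need the decreasing hull. So put $H_n:=\bigcup_{k\geq n}E_k\cap G$. Then $H_n$ decreases to a null set, and since $\mu(G)<\infty$, continuity from above gives $\mu(H_n)\to0$. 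Hence $\mu(E_n\cap G)\leq\mu(H_n)\leq\eta$ for $n$ large. By \ref{Q1} and \ref{P2},
\[
\norm{f\chi_{E_n}}_X\leq C\bigl(\norm{f\chi_{E_n\cap G}}_X+\norm{f\chi_{\mathcal{R}\setminus G}}_X\bigr)\leq C\bigl(\sigma_Xf(\eta)+\varepsilon\bigr)<2C\varepsilon .
\]
This shows $f\in X_a$.

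The only delicate steps are, in the forward direction, extracting sets of summable measure so that their tail unions shrink to a null set, and, in the converse, passing from pointwise convergence of $\chi_{E_n}$ to smallness of measure, which requires restricting to the finite-measure set $G$. Both are handled as above. The degenerate cases in Remark~\ref{Rem:SpecialMu} cause no trouble, since there the relevant quantities are simply zero.
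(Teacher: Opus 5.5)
Your proof is correct and is essentially the paper's argument. For the local condition you use the same near-extremal sets, and your tail unions $F_n=\bigcup_{k\geq n}E_k$ of sets with summable measure just inline the a.e.-convergent subsequence that the paper extracts from $\mu(E_n)\to 0$. For the global condition you use the same exhaustion $\mathcal{R}_n\nearrow\mathcal{R}$, and for the converse the same splitting $E_n=(E_n\cap G)\cup(E_n\setminus G)$ with the quasi-triangle inequality.

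One correction: your remark that $\chi_{E_n\cap G}\to 0$ a.e.\ with $\mu(G)<\infty$ is ``not enough'' for $\mu(E_n\cap G)\to 0$ is false. It is enough, by dominated convergence, which is what the paper uses. Your decreasing-hull argument is in fact a proof of exactly this implication.
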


It is worth pointing out that we make no assumptions on the underlying measure space (except $\sigma$-finiteness). We also note that a similar result for r.i.~quasi-Banach function spaces over a resonant measure space has previously been obtained in \cite[Proposition~4.5]{Pesa25}; as we will see in Section~\ref{SectionRI}, this older result is a special case of the Theorem at hand. 
Finally, similarly to what we will show in Theorem~\ref{Thm:CharConv} \ref{Thm:CharConv_iii}, the characterisation still holds if \eqref{global} is replaced by a certain stronger condition. However, we do not see an immediate application for this formulation, and so we decided to not include it for the sake of simplicity of presentation.

\begin{proof}
    We start with the necessity for~\eqref{local}. Let $\varepsilon>0$. 
    For every $n\in\N$, we find a set $E_n\subseteq \mathcal{R}$ such that $\mu(E_n)\leq n^{-1}$ and 
    \begin{equation*}
        \sigma_Xf\left(n^{-1}\right)  -\varepsilon < \norm{f \chi_{E_n}}_X.
    \end{equation*}
    We have $\mu(E_n)\rightarrow 0$. Hence, there exists a subsequence $E_{n_{k}}$ such that $\chi_{E_{n_{k}}}\rightarrow 0$ a.e. Because $f\in X_a$, we get that there exists $k_0\in\N$ such that for every $k\geq k_0$,
    \begin{equation*}
        \norm{f \chi_{E_{n_{k}}}}_X < \varepsilon.
    \end{equation*}
    Thus, for every $k\geq k_0$, we get
    \begin{equation*}
        \sigma_Xf\left(n_k^{-1}\right)< \norm{f \chi_{E_{n_k}}}_X + \varepsilon < 2 \varepsilon,
    \end{equation*}
    which is enough due to the monotonicity of our sequence. 
    
    For the necessity of~\eqref{global}, we use the $\sigma$-finiteness of $(\mathcal{R},\mu)$ to find a sequence of sets $E_n\subseteq \mathcal{R}$ such that $E_n \nearrow \mathcal{R}$ and $\mu(E_n)<n$ for every $n\in\N$. Then $\chi_{\mathcal{R}\setminus E_n}\rightarrow 0$ a.e. Let $\varepsilon >0$. Because $f\in X_a$, we find $n_0\in\N$ such that for every $n\geq n_0$ we have
    \begin{equation*}
        \norm{f \chi_{\mathcal{R}\setminus E_n}}_X<\varepsilon.
    \end{equation*}
    We obtain that for every $n\in\N, n\geq n_0$,
    \begin{equation*}
        \delta_Xf(n) \leq \norm{f \chi_{\mathcal{R}\setminus E_n}}_X < \varepsilon.
    \end{equation*} 
    
    For sufficiency, let $\varepsilon>0$ and let $\chi_{E_n}\rightarrow 0$ a.e.~for some sequence $E_n\subseteq \mathcal{R}$. By~\eqref{global}, we find $n_0\in\N$ such that 
    \begin{equation*}
        \delta_Xf(n_0) < \varepsilon.
    \end{equation*}
    Hence, there exists a set $E_0$ such that $\mu(E_0) \leq n_0$ and 
    \begin{equation}\label{E0}
         \norm{f \chi_{\mathcal{R}\setminus E_0}}_X<\varepsilon.
    \end{equation}
    We denote $\widetilde{E}_n = E_n \cap E_0$ and note that $\mu(\widetilde{E}_n) \rightarrow 0$ by the dominated convergence theorem for the Lebesgue integral. Next, by~\eqref{local}, we find $n_1\in\N$ such that for every set $E\subseteq \mathcal{R}$, $\mu(E)< n_1^{-1}$, we have 
    \begin{equation*}
         \norm{f \chi_{E}}_X<\varepsilon.
    \end{equation*}
    Because $\mu(\widetilde{E}_n) \rightarrow 0$, we find $n_2\in\N$, $n_2 \geq n_0$, such that for every $n\geq n_2$ we have $\mu(\widetilde{E}_n) < n_1^{-1}$ and by the previous equation
    \begin{equation}\label{EK}
        \norm{f \chi_{\widetilde{E}_n}}_X<\varepsilon,
    \end{equation}
    for every $n\geq n_2$. Using~\eqref{E0} and~\eqref{EK}, we obtain for every $n\geq n_2$, that
    \begin{equation*}
        \norm{f \chi_{E_n}}_X \lesssim \norm{f\chi_{\widetilde{E}_k}}_X+\norm{f\chi_{\mathcal{R}\setminus E_0}}_X < 2 \varepsilon.
    \end{equation*}
\end{proof}

This theorem reveals a simple characterisation for when the simple functions have absolutely continuous quasinorm.

\begin{corollary}
    Let $\norm{\cdot}_X$ be a quasi-Banach function norm and let $X$ be the corresponding quasi-Banach function space. Then $X_a$ contains the simple functions if and only if $\lim_{t\to 0+} \varphi_X^{\max}(t)=0$.
\end{corollary}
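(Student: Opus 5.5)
We derive the corollary from Theorem~\ref{Thm:ac_char} together with Proposition~\ref{PropXaOrdId}. A simple function is a finite linear combination of characteristic functions $\chi_F$ with $\mu(F)<\infty$; these lie in $X$ by \ref{P4}. Since $X_a$ is a linear subspace, $X_a$ contains all simple functions if and only if $\chi_F \in X_a$ for every $F$ of finite measure. So it suffices to show that the latter holds if and only if $\lim_{t\to 0+}\varphi_X^{\max}(t)=0$.

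We first evaluate the two functionals of Theorem~\ref{Thm:ac_char} for $f=\chi_F$. The global one is trivial: for $t \geq \mu(F)$ we may take $E=F$, so $\delta_X\chi_F(t)=0$. Hence \eqref{global} always holds, and by Theorem~\ref{Thm:ac_char}, $\chi_F\in X_a$ if and only if $\sigma_X\chi_F(n^{-1})\to 0$. For the local one, \ref{P2} gives
\begin{equation*}
    \sigma_X\chi_F(t)=\sup_{\substack{E\subseteq \mathcal{R} \\ \mu(E)\leq t}}\norm{\chi_{E\cap F}}_X=\sup_{\substack{G\subseteq F \\ \mu(G)\leq t}}\norm{\chi_G}_X .
\end{equation*}

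For sufficiency, assume $\varphi_X^{\max}(t)\to 0$ as $t\to 0+$. Every $G$ with $\mu(G)=s\leq t$ satisfies $\norm{\chi_G}_X\leq \varphi_X^{\max}(s)$. Therefore
\begin{equation*}
    \sigma_X\chi_F(t)\leq \sup_{s\leq t,\; s \text{ in the range of }\mu}\varphi_X^{\max}(s),
\end{equation*}
and the right-hand side tends to $0$ as $t\to0+$ by assumption. Note that no monotonicity of $\varphi_X^{\max}$ is needed here, since the supremum is taken over all $s\leq t$. Thus \eqref{local} holds for every $\chi_F$.

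For necessity, the main obstacle is that $\varphi_X^{\max}(t)$ takes a supremum over sets spread across the whole space, while $\sigma_X\chi_F$ only sees subsets of one fixed $F$. We argue by contradiction. Suppose there are $\varepsilon>0$, $t_k\to0$ and sets $G_k$ with $\mu(G_k)=t_k$ and $\norm{\chi_{G_k}}_X>\varepsilon$. Passing to a subsequence, we may assume $\sum_k t_k<\infty$. Put $F=\bigcup_k G_k$; then $\mu(F)<\infty$, so $\chi_F$ is simple and hence lies in $X_a$ by hypothesis. Now $G_k\subseteq F$ and $\mu(G_k)\to0$, so $\sigma_X\chi_F(t)\ge\varepsilon$ for every $t>0$ (choose $k$ with $t_k\leq t$). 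This contradicts \eqref{local}, which holds for $\chi_F$ by Theorem~\ref{Thm:ac_char}. Alternatively, one can avoid $\sigma_X$ entirely: $\chi_{\bigcup_{j\ge k}G_j}\to0$ a.e., hence $\norm{\chi_F\chi_{G_k}}_X\to0$ by absolute continuity of $\chi_F$, contradicting $\norm{\chi_{G_k}}_X>\varepsilon$.

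Finally, if $0$ is not an accumulation point of the range of $\mu$, the limit condition is read on the range of $\mu$. In that case it holds vacuously or trivially, consistent with Remark~\ref{Rem:SpecialMu}, and both sides of the equivalence are true.
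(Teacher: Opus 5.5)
Your proof is correct, and it is more complete than the paper's. The reduction to characteristic functions of finite-measure sets and the sufficiency direction coincide with the paper's first paragraph: you bound $\sigma_X\chi_F(t)$ by $\sup_{s\le t}\varphi_X^{\max}(s)$, note that $\delta_X\chi_F(t)=0$ for $t\ge\mu(F)$, and invoke Theorem~\ref{Thm:ac_char}. Your supremum over $s\le t$ is slightly more careful than the paper's $t\in(0,n^{-1})$, which misses sets of measure exactly $n^{-1}$.

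The routes differ genuinely in the necessity direction, and there the paper's written proof does not actually establish it. Its second paragraph starts from a finite-measure set $G$ with $\chi_G\notin X_a$ and concludes $\limsup_{t\to0+}\varphi_X^{\max}(t)>0$. That is only the contrapositive of the \emph{if} direction again, so the \emph{only if} direction is never argued. Proving it requires exactly your idea. The quantity $\varphi_X^{\max}$ takes suprema over sets located anywhere in $\mathcal{R}$, whereas absolute continuity of a single $\chi_F$ only controls subsets of $F$. You therefore glue the bad sets $G_k$, chosen with summable measures, into one set $F=\bigcup_k G_k$ of finite measure, so that $\chi_F$ is simple and the hypothesis applies. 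Either of your closing arguments then gives the contradiction: via $\sigma_X\chi_F$ and Theorem~\ref{Thm:ac_char}, or directly via $\chi_{G_k}\to0$ a.e.

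One cosmetic point: the identity for $\sigma_X\chi_F(t)$ is a mere reparametrisation and does not need \ref{P2}.
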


\begin{proof}
    As $X_a$ is an order ideal, it suffices to show the characterisation for the characteristic functions of sets of finite measure. 
    
    Let $G\subseteq \R$, $0<\mu(G)<\infty$ be arbitrary. It is easy to see that $\delta_X \chi_G (n)=0$ for $n>\mu(G)$ and for any $n\in \N$, it holds
    \begin{equation*}
        \sigma_X \chi_G \left (n^{-1} \right) = \sup_{\substack{E\subseteq \mathcal{R}  \\ \mu(E)\leq n^{-1}}} \norm{\chi_{E\cap G}}_X \leq \sup_{\substack{E\subseteq \mathcal{R}  \\ \mu(E)\leq n^{-1}}} \norm{\chi_{E}}_X \leq \sup_{t\in(0,n^{-1})} \varphi_X^{\max}\left( t\right) \to 0.
    \end{equation*}
    Theorem~\ref{Thm:ac_char} now shows that $\chi_G\in X_a$.

    On the other hand, if there is some $G \subseteq \mathcal{R}$ such that $0<\mu(G)<\infty$ while $\chi_G \notin X_a$, then there is a sequence $E_n$ of subsets of $\mathcal{R}$ such that $\chi_{E_n} \to 0$ $\mu$-a.e.~while
    \begin{equation*}
        \limsup_{n \to \infty} \; \lVert \chi_{G \cap E_n} \rVert_X > 0.
    \end{equation*}
    As clearly $\mu(G\cap E_n) \to 0$, we get
    \begin{equation*}
        \limsup_{t \to 0_+} \; \varphi_X^{\max}(t) \geq \limsup_{n \to \infty} \; \lVert \chi_{G \cap E_n} \rVert_X > 0.
    \end{equation*}
    
\end{proof}

\section{Amalgam-type characterisations of convergence and compactness} \label{SecAmalgamType}

\subsection{The main amalgam-type characterisations} \label{SecAmalgamTypeMain}

To prove our main amalgam-type characterisations of convergence and compactness, we will require uniform versions of the functionals introduced in Definition~\ref{Def:maximal_minimal_norm}.

\begin{definition}\label{Def:uniform_maximal_minimal_norm}
    Let $\norm{\cdot}_X$ be a quasi-Banach function norm and let $X$ be the corresponding quasi-Banach function space. We define the \emph{uniform maximal local norm} of $A \subseteq X$
    \begin{align*}
        \sigma_X A(t)&:=\sup_{\substack{E\subseteq \mathcal{R} \\ \mu(E)\leq t}} \sup_{f \in A} \norm{f \chi_{E}}_X, &\text{for } t \in (0, \infty).
    \end{align*}
    and the \emph{uniform minimal global norm} of $A \subseteq X$
    \begin{align*}
        \delta_X A(t)&:=\inf_{\substack{E\subseteq \mathcal{R}  \\ \mu(E)\leq t}} \sup_{f \in A} \norm{f \chi_{\mathcal{R} \setminus E}}_X, &\text{for } t \in (0, \infty).
    \end{align*}
\end{definition}

\begin{remark}
    As suprema can be interchanged, we get
    \begin{align*}
        \sigma_X A(t)&=\sup_{f \in A} \sigma_X f(t), &\text{for } t \in (0, \infty).
    \end{align*}
    On the contrary, we cannot interchange a supremum with an infimum, so an analogous formula fails to hold for $\delta_X$. This is rather important, as it implies that
    \begin{align*}
        \lim_{n \to \infty} \delta_X A(n) &= 0, \\
        \lim_{n \to \infty} \sup_{g \in A} \delta_X g(n) &= 0,
    \end{align*}
    are two distinct conditions, with the former being stronger than the latter.
\end{remark}

In the light of Theorem~\ref{Thm:OldCharComp}, it would be natural to characterise compactness through providing an amalgam characterisation of the uniform absolute continuity of the quasinorm. Such an approach is indeed possible (and we present such a characterisation in Theorem~\ref{Thm:uac_char} below), but we decided to provide a direct proof of the characterisation, i.e.~one not requiring Theorem~\ref{Thm:OldCharComp}, because it naturally leads to another form of the characterising conditions that we find to be quite interesting.

The first step is a characterisation of convergence.

\begin{theorem} \label{Thm:CharConv}
    Let $\norm{\cdot}_X$ be a quasi-Banach function norm and let $X$ be the corresponding quasi-Banach function space. Let $f_k$ be a sequence of functions in $X_a$ and denote $A = \{ f_k; \; k \in \mathbb{N} \}$. Then the following statements are equivalent:
    \begin{enumerate}
        \item \label{Thm:CharConv_i} There is some $f \in X$ such that $f_k \to f$ in $(X, \lVert \cdot \rVert_X)$.
        \item \label{Thm:CharConv_ii} The following three conditions are all satisfied:
        \begin{enumerate}
            \item \label{Thm:CharConv_iia} There is some $f \in \mathcal{M}_0$ such that $f_k \to f$ in $(\mathcal{M}_0, \mu_{\loc} )$,
            \item \label{Thm:CharConv_iib} $\lim_{n \to \infty} \sigma_X A(n^{-1}) = 0$,
            \item \label{Thm:CharConv_iic} $\lim_{n \to \infty} \delta_X A(n) = 0$.
        \end{enumerate}
        \item \label{Thm:CharConv_iii} The following three conditions are all satisfied:
        \begin{enumerate}
            \item \label{Thm:CharConv_iiia} There is some $f \in \mathcal{M}_0$ such that $f_k \to f$ in $(\mathcal{M}_0, \mu_{\loc} )$,
            \item \label{Thm:CharConv_iiib} $\lim_{n \to \infty} \sigma_X A(n^{-1}) = 0$,
            \item \label{Thm:CharConv_iiic} for an arbitrary sequence $\mathcal{R}_n$ of subsets of $\mathcal{R}$ such that $\mu(\mathcal{R}_n) < \infty$ and $\mathcal{R}_n \nearrow \mathcal{R}$, we have that $\lim_{n \to \infty} \sup_{f_k \in A} \lVert f_k \chi_{\mathcal{R} \setminus \mathcal{R}_n} \rVert_X = 0$.
        \end{enumerate}
    \end{enumerate} 
\end{theorem}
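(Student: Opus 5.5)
I would prove the cycle of implications \ref{Thm:CharConv_i} $\Rightarrow$ \ref{Thm:CharConv_iii} $\Rightarrow$ \ref{Thm:CharConv_ii} $\Rightarrow$ \ref{Thm:CharConv_i}.

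\textbf{Step 1: \ref{Thm:CharConv_i} $\Rightarrow$ \ref{Thm:CharConv_iii}.} Condition (a) follows directly from Theorem~\ref{Thm:EmbToMeasure}. Since $X_a$ is closed (Proposition~\ref{PropXaOrdId}), we also get $f \in X_a$. For (b) and (c) I use the standard ``finite set plus tail'' argument. Fix $\varepsilon>0$ and pick $k_0$ with $\lVert f_k - f\rVert_X<\varepsilon$ for all $k\ge k_0$. By the lattice property, for any $E$ and $k \geq k_0$,
\begin{equation*}
    \lVert f_k\chi_E\rVert_X \le C\bigl(\lVert f\chi_E\rVert_X+\varepsilon\bigr).
\end{equation*}
The remaining finitely many functions $f_0,\dots,f_{k_0-1}$, together with $f$, all lie in $X_a$. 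For (b), Theorem~\ref{Thm:ac_char} gives $\sigma_X g(n^{-1})\to 0$ for each such $g$, so $\sigma_X A(n^{-1}) \lesssim \varepsilon$ for large $n$. For (c), given $\mathcal{R}_n\nearrow\mathcal{R}$, we have $\chi_{\mathcal{R}\setminus\mathcal{R}_n}\to 0$ a.e., so absolute continuity gives $\lVert g\chi_{\mathcal{R}\setminus\mathcal{R}_n}\rVert_X\to0$ for each of the finitely many $g$. The same splitting then yields the uniform bound.

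\textbf{Step 2: \ref{Thm:CharConv_iii} $\Rightarrow$ \ref{Thm:CharConv_ii}.} This is trivial. By $\sigma$-finiteness, choose $\mathcal{R}_n\nearrow\mathcal{R}$ with $\mu(\mathcal{R}_n)\le n$ (reindexing a finite-measure exhaustion if needed). Then
\begin{equation*}
    \delta_X A(n)\le \sup_k\lVert f_k\chi_{\mathcal{R}\setminus\mathcal{R}_n}\rVert_X \to 0.
\end{equation*}

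\textbf{Step 3: \ref{Thm:CharConv_ii} $\Rightarrow$ \ref{Thm:CharConv_i}.} This is the main step.

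First I show $f\in X$ and obtain the same controls for $f$. Pass to a subsequence $f_{k_j}\to f$ a.e., which is possible via a diagonal argument over a finite-measure exhaustion. By the Fatou property (in its lattice form, via $\inf_{j\ge i}\lvert f_{k_j}\rvert$), every $E$ satisfies
\begin{equation*}
    \lVert f\chi_E\rVert_X\le\liminf_j\lVert f_{k_j}\chi_E\rVert_X.
\end{equation*}
Consequently $\sigma_X f(t)\le\sigma_X A(t)$ and $\lVert f\chi_{\mathcal{R}\setminus E}\rVert_X\le\sup_k\lVert f_k\chi_{\mathcal{R}\setminus E}\rVert_X$. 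Finiteness of $\lVert f\rVert_X$ follows from splitting into three pieces: a tail outside $E_0$ with small uniform norm, the part where $\lvert f\rvert$ is large inside $E_0$ (which has small measure, controlled by $\sigma_X$), and the bounded part on a set of finite measure, controlled by \ref{P4}.

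Now fix $\varepsilon$. Choose $E_0$ with $\mu(E_0)\le n_0$ and $\sup_k\lVert f_k\chi_{\mathcal{R}\setminus E_0}\rVert_X<\varepsilon$; the same bound then holds for $f$. Next choose $\eta$ with $\sigma_X A(\eta)<\varepsilon$, so also $\sigma_X f(\eta)\le\varepsilon$. Let $\tau>0$ be small, to be fixed later, and set
\begin{equation*}
    F_k=\{x\in E_0;\ \lvert f_k-f\rvert>\tau\}.
\end{equation*}
By local convergence in measure, $\mu(F_k)<\eta$ for $k$ large. Decompose
\begin{equation*}
    f_k-f=(f_k-f)\chi_{\mathcal{R}\setminus E_0}+(f_k-f)\chi_{F_k}+(f_k-f)\chi_{E_0\setminus F_k}.
\end{equation*}
The first two terms are $\lesssim\varepsilon$ by the quasi-triangle inequality and the above bounds. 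The third is at most $\tau\lVert\chi_{E_0}\rVert_X$, which is $<\varepsilon$ once $\tau$ is chosen small using \ref{P4}. Repeated quasi-triangle constants are harmless since there are only three terms.

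\textbf{Main obstacle.} The delicate point is transferring the controls from $A$ to the limit $f$, and in particular proving $f\in X$, which requires the Fatou property along an a.e.-convergent subsequence. Once that is in place, the three-term decomposition closes the argument.
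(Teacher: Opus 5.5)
Your proposal is correct and follows essentially the same route as the paper. It uses the same cycle of implications, the same Fatou-based transfer of the local and tail bounds to the limit $f$, and the same three-piece decomposition into $\mathcal{R}\setminus E_0$, the small bad set, and the part of $E_0$ where $\lvert f_k - f\rvert$ is small.

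There are two minor deviations, both valid. In (i) $\Rightarrow$ (iii)(c) you apply absolute continuity of the finitely many functions $f, f_0, \dots, f_{k_0-1}$ directly along $\chi_{\mathcal{R}\setminus\mathcal{R}_n}\to 0$. This is simpler than the paper's detour through an auxiliary set $E_0$ and the local condition $\sigma_X A$. You also verify $f\in X$ separately before the main estimate, whereas the paper reads it off from the final bound on $\lVert f - f_k\rVert_X$.
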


It is immediate from Theorem~\ref{Thm:EmbToMeasure} that if either of the three conditions holds, then the limits in $(X, \lVert \cdot \rVert_X)$ and in $(\mathcal{M}_0, \mu_{\loc} )$ coincide. This is also probably a good place to note that the implication from \ref{Thm:CharConv_ii} to \ref{Thm:CharConv_i} does not require that the sequence is a~priori known to be in $X_a$, but we need this a~priori assumption for the converse implication, as e.g.~constant sequences of functions in $X \setminus X_a$ also converge but clearly fail our conditions (as per Theorem~\ref{Thm:ac_char}). Hence, this assumption will be present in characterisations of convergence or compactness, but not in results containing only sufficient conditions.

\begin{proof}
    It is clear that \ref{Thm:CharConv_iii} implies \ref{Thm:CharConv_ii}. Let us assume that \ref{Thm:CharConv_ii} holds and fix $\varepsilon > 0$. Using \ref{Thm:CharConv_ii}\ref{Thm:CharConv_iic}, we find some $E_0 \subseteq \mathcal{R}$ with $\mu(E_0) < \infty$ such that
    \begin{align*}
        \sup_{f_k \in A} \norm{f_k \chi_{\mathcal{R}\setminus E_0}}_X &< \varepsilon. \\
    \end{align*}
    As there is a subsequence of $f_k$ converging to $f$ $\mu$-a.e., we may apply the Fatou's lemma (\cite[Theorem~3.4]{NekvindaPesa24}) to get that also
    \begin{equation*}
        \norm{f \chi_{\mathcal{R}\setminus E_0}}_X < \varepsilon.
    \end{equation*}    
    
    We also find via \ref{Thm:CharConv_ii}\ref{Thm:CharConv_iib} some $n \in \mathbb{N}$ such that $\sigma_X A(n^{-1}) < \varepsilon$. Then we apply the convergence $f_k \to f$ in $(\mathcal{M}_0, \mu_{\loc} )$ onto the set $E_0$ to find some $k_0 \in \mathbb{N}$ such that the sets
    \begin{align*}
        E_k &:=\left \{ x \in E_0 ; \; \lvert f(x) - f_k(x) \rvert > \varepsilon \lVert \chi_{E_0} \rVert_X^{-1} \right \}
    \end{align*}
    satisfy $\mu \left ( E_k \right ) < n^{-1}$ for every $k \geq k_0$. Next, if we fix any $\widetilde{k} \geq k_0$ and the corresponding set $E_{\widetilde{k}}$, then the a.e.~convergence of an appropriate subsequence of $f_k$ and the Fatou's lemma (\cite[Theorem~3.4]{NekvindaPesa24}) show that also
    \begin{equation*}
        \norm{f \chi_{E_{\widetilde{k}}}}_X < \varepsilon.
    \end{equation*}
    
    Finally, we get for every $k \geq k_0$
    \begin{equation*}
    \begin{split}
        \lVert f - f_k \rVert_X &\lesssim \lVert f \chi_{E_k} \rVert_X + \lVert f_k \chi_{E_k} \rVert_X + \lVert (f - f_k) \chi_{E_0 \setminus E_k} \rVert_X +  \lVert f\chi_{\mathcal{R} \setminus E_0} \rVert_X + \lVert f_k \chi_{\mathcal{R} \setminus E_0} \rVert_X \\
        &< 5 \varepsilon.
    \end{split}
    \end{equation*}
    This also shows that $f \in X$.

    It remains to show that \ref{Thm:CharConv_i} implies \ref{Thm:CharConv_iii}. Fix again $\varepsilon > 0$. Theorem~\ref{Thm:EmbToMeasure} shows that $f_k \to f$ in $(\mathcal{M}_0, \mu_{\loc} )$. As $X_a$ is closed in $(X, \lVert \cdot \rVert_X)$ (Proposition~\ref{PropXaOrdId}), we get $f \in X_a$. Let $k_0 \in \mathbb{N}$ be such that we have $\lVert f - f_k \rVert_X < \varepsilon$ for all $k \geq k_0$. Applying Theorem~\ref{Thm:ac_char} on $f$ and $f_k$, $k < k_0$, we may find $n_0 \in \mathbb{N}$ such that
    \begin{align*}
        \sigma_X f(n^{-1}) &< \varepsilon &\text{for all } n \geq n_0, \\
        \sigma_X f_k(n^{-1}) &< \varepsilon &\text{for all } k < k_0 \text{ and all } n \geq n_0.
    \end{align*}
    As for $k \geq k_0$, we compute for any $E \subseteq \mathcal{R}$ with $\mu(E) < n^{-1}$
    \begin{equation*}
        \lVert f_k \chi_E \rVert_X \lesssim \lVert (f - f_k) \chi_E \rVert_X + \lVert f \chi_E \rVert_X \lesssim \varepsilon,
    \end{equation*}
    whence $\sigma_X A(n^{-1}) \lesssim \varepsilon$ for all $n \geq n_0$. 
    
    Similarly, by applying Theorem~\ref{Thm:ac_char} on $f$ and $f_k$, $k < k_0$, and then taking the union of the obtained sets, we get some $E_0 \subseteq \mathcal{R}$ such that $\mu(E_0) < \infty$ and
    \begin{align*}
        \lVert f \chi_{\mathcal{R} \setminus E_0} \rVert_X
        &< \varepsilon, \\
        \lVert f_k \chi_{\mathcal{R} \setminus E_0} \rVert_X &< \varepsilon &\text{for all } k < k_0.
    \end{align*}
    We now compute for $k \geq k_0$
    \begin{equation*}
        \lVert f_k \chi_{\mathcal{R} \setminus E_0} \rVert_X \lesssim \lVert (f - f_k) \chi_{\mathcal{R} \setminus E_0} \rVert_X + \lVert f \chi_{\mathcal{R} \setminus E_0} \rVert_X \lesssim \varepsilon.
    \end{equation*}
    This would suffice for \ref{Thm:CharConv_ii}\ref{Thm:CharConv_iic}, but we must show \ref{Thm:CharConv_iii}\ref{Thm:CharConv_iiic}. To this end, fix arbitrary $\mathcal{R}_n$ as in the statement and observe that there is some $n_1 \in \mathbb{N}$ such that $\mu(E_0 \setminus \mathcal{R}_{n_1}) < n_0^{-1}$, where $n_0$ is the number obtained in the step concerning $\sigma_X A$. Then we have for $n \geq n_1$ and every $k \in \mathbb{N}$
    \begin{equation*}
    \begin{split}
        \lVert f_k \chi_{\mathcal{R} \setminus \mathcal{R}_n} \rVert_X &\leq \lVert f_k \chi_{\mathcal{R} \setminus \mathcal{R}_{n_1}} \rVert_X \\
        &\lesssim \lVert f_k \chi_{\mathcal{R} \setminus E_0} \rVert_X + \lVert f_k \chi_{E_0 \setminus \mathcal{R}_{n_1}} \rVert_X \\
        &\lesssim \varepsilon + \sigma_X A(n^{-1}) \\
        &\lesssim \varepsilon.
    \end{split}
    \end{equation*}    
\end{proof} 

Note that the condition \ref{Thm:CharConv_iii}\ref{Thm:CharConv_iiic} in the previous Theorem is as of itself strictly stronger than \ref{Thm:CharConv_ii}\ref{Thm:CharConv_iic} (even for an individual function, consider e.g.~$f = \min \{1, \frac{1}{x} \}$ in $L^{\infty}((0, \infty))$; it is the presence of the common part \ref{Thm:CharConv_iib} that makes \ref{Thm:CharConv_ii} and \ref{Thm:CharConv_iii} equivalent. Before discussing the purpose of introducing this stronger condition we have to present the promised characterisation of compactness.

\begin{theorem} \label{Thm:CharComp}
    Let $\norm{\cdot}_X$ be a quasi-Banach function norm and let $X$ be the corresponding quasi-Banach function space. Let $A \subseteq X_a$. Then the following statements are equivalent:
    \begin{enumerate}
        \item \label{Thm:CharComp_i} $A$ is precompact in $(X, \lVert \cdot \rVert_X)$.
        \item \label{Thm:CharComp_ii} The following three conditions are all satisfied:
        \begin{enumerate}
            \item \label{Thm:CharComp_iia} $A$ is precompact in $(\mathcal{M}_0, \mu_{\loc} )$,
            \item \label{Thm:CharComp_iib} $\lim_{n \to \infty} \sigma_X A(n^{-1}) = 0$,
            \item \label{Thm:CharComp_iic} $\lim_{n \to \infty} \delta_X A(n) = 0$.
        \end{enumerate}
        \item \label{Thm:CharComp_iii} The following three conditions are all satisfied:
        \begin{enumerate}
            \item \label{Thm:CharComp_iiia} $A$ is precompact in $(\mathcal{M}_0, \mu_{\loc} )$,
            \item \label{Thm:CharComp_iiib} $\lim_{n \to \infty} \sigma_X A(n^{-1}) = 0$,
            \item \label{Thm:CharComp_iiic} for an arbitrary sequence $\mathcal{R}_n$ of subsets of $\mathcal{R}$ such that $\mu(\mathcal{R}_n) < \infty$ and $\mathcal{R}_n \nearrow \mathcal{R}$, we have that $\lim_{n \to \infty} \sup_{f \in A} \lVert f \chi_{\mathcal{R} \setminus \mathcal{R}_n} \rVert_X = 0$.
        \end{enumerate}
    \end{enumerate} 
\end{theorem}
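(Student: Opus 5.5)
We prove the cycle \ref{Thm:CharComp_iii} $\Rightarrow$ \ref{Thm:CharComp_ii} $\Rightarrow$ \ref{Thm:CharComp_i} $\Rightarrow$ \ref{Thm:CharComp_iii}, reducing everything to sequences so that Theorem~\ref{Thm:CharConv} can be applied. Both $(X, \lVert \cdot \rVert_X)$ and $(\mathcal{M}_0, \mu_{\loc})$ are metrisable, so precompactness is equivalent to every sequence having a Cauchy subsequence. Since $X$ is complete, this is the same as every sequence having a subsequence that converges in $X$ (its limit may lie outside $A$). The same holds in $\mathcal{M}_0$, which is completely metrisable.

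The implication \ref{Thm:CharComp_iii} $\Rightarrow$ \ref{Thm:CharComp_ii} is trivial. Choose, by $\sigma$-finiteness, sets $\mathcal{R}_n \nearrow \mathcal{R}$ of finite measure. Given $n$, some $\mathcal{R}_m$ has measure at most $n$, since $\mu(\mathcal{R}_m)$ may be arranged to be at most $m$. Hence $\delta_X A(n) \leq \sup_{f\in A}\lVert f\chi_{\mathcal{R}\setminus\mathcal{R}_n}\rVert_X$ after reindexing, and the right-hand side tends to zero.

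For \ref{Thm:CharComp_ii} $\Rightarrow$ \ref{Thm:CharComp_i}, take any sequence $f_k$ in $A$. By \ref{Thm:CharComp_iia} it has a subsequence converging in $(\mathcal{M}_0, \mu_{\loc})$ to some $f \in \mathcal{M}_0$. The set $B$ of terms of this subsequence satisfies $\sigma_X B \leq \sigma_X A$ and $\delta_X B \leq \delta_X A$, because both functionals are monotone in the set. Therefore $B$ satisfies \ref{Thm:CharConv_ii} of Theorem~\ref{Thm:CharConv}. As noted after that theorem, the implication \ref{Thm:CharConv_ii} $\Rightarrow$ \ref{Thm:CharConv_i} does not need membership in $X_a$. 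So the subsequence converges in $X$, and $A$ is precompact.

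For \ref{Thm:CharComp_i} $\Rightarrow$ \ref{Thm:CharComp_iii}, condition \ref{Thm:CharComp_iiia} follows from Theorem~\ref{Thm:EmbToMeasure}, since a continuous map sends precompact sets to precompact sets (equivalently, Cauchy subsequences stay Cauchy). The conditions \ref{Thm:CharComp_iiib} and \ref{Thm:CharComp_iiic} are the main point, and we argue by contradiction with sequences. Suppose \ref{Thm:CharComp_iiib} fails. Then there are $\varepsilon>0$, functions $f_n \in A$ and sets $E_n$ with $\mu(E_n)\leq n^{-1}$ and $\lVert f_n\chi_{E_n}\rVert_X \geq \varepsilon$. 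Pass to a subsequence converging in $X$. By Theorem~\ref{Thm:CharConv} \ref{Thm:CharConv_i} $\Rightarrow$ \ref{Thm:CharConv_iii}, which applies because $A\subseteq X_a$, the set of terms $B$ satisfies $\sigma_X B(m^{-1}) \to 0$. This contradicts $\sigma_X B(n_j^{-1}) \geq \lVert f_{n_j}\chi_{E_{n_j}}\rVert_X \geq \varepsilon$.

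Suppose instead \ref{Thm:CharComp_iiic} fails for some $\mathcal{R}_n$. Since $\lVert f \chi_{\mathcal{R}\setminus\mathcal{R}_n}\rVert_X$ is non-increasing in $n$ by \ref{P2}, we get $\varepsilon>0$ and $f_n\in A$ with $\lVert f_n\chi_{\mathcal{R}\setminus\mathcal{R}_n}\rVert_X\geq\varepsilon$ for all $n$. A convergent subsequence then violates \ref{Thm:CharConv_iii}\ref{Thm:CharConv_iiic} of Theorem~\ref{Thm:CharConv}. Here the monotonicity in $n$ is essential: the subsequence has $\lVert f_{n_j}\chi_{\mathcal{R}\setminus\mathcal{R}_{m}}\rVert_X\geq\varepsilon$ whenever $n_j \geq m$, so the supremum over its terms stays at least $\varepsilon$ for every $m$.

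I expect the only delicate point to be this last step, namely checking that failure of a uniform condition survives passage to a subsequence. The monotonicity of $\sigma_X$ in $t$ and of the tails in $n$ takes care of this. Everything else is bookkeeping with metrisability.
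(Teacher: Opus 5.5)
Your proposal is correct and follows the paper's route. You reduce to sequences via metrisability and obtain (ii)$\Rightarrow$(i) directly from Theorem~\ref{Thm:CharConv}. For (i)$\Rightarrow$(iii), you build a bad sequence from the failure of (b) or (c) and show, again via Theorem~\ref{Thm:CharConv}, that no subsequence can converge. Your explicit use of the monotonicity of $\sigma_X A(n^{-1})$ and of the tails $\lVert f\chi_{\mathcal{R}\setminus\mathcal{R}_n}\rVert_X$ in $n$ supplies details the paper leaves implicit. It also handles the (c) case correctly, where the paper's text has minor typos.
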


\begin{proof}
    As both topologies are metrisable, we only need to work with sequences. Thus, the sufficiency of \ref{Thm:CharComp_ii} for \ref{Thm:CharComp_i} is provided directly by Theorem~\ref{Thm:CharConv}. That \ref{Thm:CharComp_iii} implies \ref{Thm:CharComp_ii} is immediate. 
    
    As for the remaining implication, if we assume that \ref{Thm:CharComp_i} holds, then the condition \ref{Thm:CharComp_iii}\ref{Thm:CharComp_iiia} is satisfied by Theorem~\ref{Thm:EmbToMeasure}. Further, if we assume that \ref{Thm:CharComp_iii}\ref{Thm:CharComp_iiib} fails, then there is an $\varepsilon$ such that we may find for every $n \in \mathbb{N}$ some function $f_n \in A$ satisfying $\sigma_X f_n(n^{-1}) >\varepsilon$. This sequence $f_n$ cannot have any convergent subsequence in $(X, \lVert \cdot \rVert_X)$, as existence of such a subsequence would contradict Theorem~\ref{Thm:CharConv}. Similarly, if we assume that \ref{Thm:CharComp_iii}\ref{Thm:CharComp_iiib} fails, then there is a sequence $\mathcal{R}_n$ of subsets of $\mathcal{R}$ such that $\mu(\mathcal{R}_n) < \infty$ and $\mathcal{R}_n \nearrow \mathcal{R}$ and that we may find for every $n$ some function $f_n \in A$ satisfying $\lVert f \chi_{\mathcal{R} \setminus \mathcal{R}_n} \rVert_X > \varepsilon$. Just as before, this sequence $f_n$ cannot have any convergent subsequence in $(X, \lVert \cdot \rVert_X)$ because of Theorem~\ref{Thm:CharConv}.
\end{proof} 

The proof above is the source of the motivation for the stronger condition \ref{Thm:CharComp_iii}\ref{Thm:CharComp_iiic}. A failure of the weaker condition \ref{Thm:CharComp_ii}\ref{Thm:CharComp_iic} provides us with a counter-example function for every set of finite measure, but we need a sequence, so we must restrict ourselves to a countable family of sets. However, we then do not immediately see whether convergence of a subsequence violates Theorem~\ref{Thm:CharConv} \ref{Thm:CharConv_ii}\ref{Thm:CharConv_iic}; the sets we have used to construct the sequence are of course bad, but it is not obvious that there are no other sets of finite measure that would ensure $\delta_X A(n) \to 0$. The local condition $\sigma_X A (n^{-1}) \to 0$ must come into play and facilitate the pass from the hypothetical ``good'' sequence of sets to our counter-example one, just as at the end of the proof of Theorem~\ref{Thm:CharConv}.

However, it seems worth noting that the above-presented considerations may fail to be relevant for particular choices of the underlying measure spaces.

\begin{remark}
    
    Consider the measure space $(\mathbb{N}, m)$, where $m$ is the classical counting measure, and an arbitrary sequence of sets $\mathcal{R}_n$ with $m(\mathcal{R}_n) < \infty$ and $\mathcal{R}_n \nearrow \mathbb{N}$. Then every set $E \subseteq \mathbb{N}$ with $m(E) < \infty$ is a subset of $\mathcal{R}_n$ for $n$ large enough. Consequently, the conditions \ref{Thm:CharComp_iic} in parts \ref{Thm:CharComp_ii} and \ref{Thm:CharComp_iii} of Theorem~\ref{Thm:CharComp} are equivalent.
\end{remark}

Let us now consider the conditions $\sigma_X A(n^{-1}) \to 0$ and $\delta_X A(n) \to 0$ separately. Inspired by \cite[Theorems~4.3 and 4.6]{MihulaPandy25}, we present the following results. The first two of them show that if we appropriately strengthen the $X$-independent assumption $f_n \to f$ in $(\mathcal{M}_0, \mu_{\loc} )$, we may prove convergence using only one of the said conditions.

\begin{theorem} \label{Thm:ConvLoc}
    Let $\norm{\cdot}_X$ be a quasi-Banach function norm and let $X$ be the corresponding quasi-Banach function space. Let $f_k$ be a sequence of functions in $X$, $f \in X$, and denote $A = \{ f_k; \; k \in \mathbb{N} \} \cup \{ f \}$. Assume that
    \begin{enumerate}
        \item $\lim_{k \to \infty} \mu(\supp (f_k - f) ) = 0$,
        \item $\lim_{n \to \infty} \sigma_X A(n^{-1}) = 0$.
    \end{enumerate} 
    Then $f_k \to f$ in $(X, \lVert \cdot \rVert_X)$.
\end{theorem}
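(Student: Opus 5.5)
The plan is a direct estimate: split $f - f_k$ by its support and bound it with the quasi-triangle inequality. Write $G_k := \supp(f_k - f)$. Since $f_k - f = 0$ $\mu$-a.e.\ on $\mathcal{R} \setminus G_k$, we have $f_k - f = (f_k - f)\chi_{G_k} = f_k \chi_{G_k} - f \chi_{G_k}$ a.e. The axioms \ref{Q1}\ref{Q1c} and \ref{P2}, together with $\lVert \cdot \rVert_X = \lVert \lvert \cdot \rvert \rVert_X$ and $\lvert f_k\chi_{G_k} - f\chi_{G_k} \rvert \leq \lvert f_k \rvert \chi_{G_k} + \lvert f \rvert \chi_{G_k}$, then give
\begin{equation*}
    \lVert f_k - f \rVert_X \leq C \left( \lVert f_k \chi_{G_k} \rVert_X + \lVert f \chi_{G_k} \rVert_X \right),
\end{equation*}
where $C$ is the modulus of concavity.

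Next, fix $\varepsilon > 0$. By assumption (ii), there is $n \in \mathbb{N}$ with $\sigma_X A(n^{-1}) < \varepsilon$. By assumption (i), there is $k_0$ such that $\mu(G_k) \leq n^{-1}$ for all $k \geq k_0$. For such $k$, the set $G_k$ is admissible in the supremum defining $\sigma_X A(n^{-1})$. Since both $f_k$ and $f$ belong to $A$ (this is why $f$ is included in $A$), we obtain
\begin{equation*}
    \lVert f_k \chi_{G_k} \rVert_X \leq \sigma_X A(n^{-1}) < \varepsilon, \qquad \lVert f \chi_{G_k} \rVert_X \leq \sigma_X A(n^{-1}) < \varepsilon.
\end{equation*}
Hence $\lVert f_k - f \rVert_X < 2C\varepsilon$ for all $k \geq k_0$, which proves the convergence.

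I see no real obstacle here. The only points needing care are that $\supp(f_k - f)$ is well defined up to a null set, which is harmless because quasinorms ignore null sets, and that $f_k - f \in \mathcal{M}_0$, which holds because $f_k, f \in X$ are a.e.\ finite. Unlike Theorem~\ref{Thm:CharConv}, neither the Fatou property nor membership in $X_a$ is needed.
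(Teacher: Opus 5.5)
Your proof is correct and follows the paper's argument. The paper also fixes $n$ with $\sigma_X A(n^{-1}) < \varepsilon$, then $k_0$ with $\mu(\supp(f_k - f)) < n^{-1}$. It then bounds $\lVert f_k - f \rVert_X$ via the quasi-triangle inequality by $\lVert f_k \chi_{\supp(f_k - f)} \rVert_X + \lVert f \chi_{\supp(f_k - f)} \rVert_X$, with both terms controlled by $\sigma_X A(n^{-1})$ because $f_k, f \in A$.
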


\begin{proof}
    Fix $\varepsilon >0$ and find in turn $n \in \mathbb{N}$ such that $\sigma_X A(n^{-1}) < \varepsilon$ and $k_0 \in \mathbb{N}$ such that $\mu(\supp (f_k - f) ) < n^{-1}$ for every $k \geq k_0$. Then we have for every such $k$
    \begin{equation*}
        \lVert f_k - f \rVert_X = \lVert (f_k - f) \chi_{\supp (f_k - f)} \rVert_X \lesssim \lVert f_k \chi_{\supp (f_k - f)} \rVert_X + \lVert f \chi_{\supp (f_k - f)} \rVert_X \lesssim \varepsilon.
    \end{equation*}
\end{proof} 

\begin{theorem} \label{Thm:ConvGlob}
    Let $\norm{\cdot}_X$ be a quasi-Banach function norm and let $X$ be the corresponding quasi-Banach function space. Let $f_k$ be a sequence of functions in $X$, $f \in X$, and denote $A = \{ f_k; \; k \in \mathbb{N} \} \cup \{ f \}$. Assume that
    \begin{enumerate}
        \item for every set $E$ with $\mu(E) < \infty$, we have that $f_k \chi_E \to f \chi_E$ in $(L^{\infty}, \lVert \cdot \rVert_{L^{\infty}})$,
        \item $\lim_{n \to \infty} \delta_X A(n) = 0$.
    \end{enumerate} 
    Then $f_k \to f$ in $(X, \lVert \cdot \rVert_X)$.
\end{theorem}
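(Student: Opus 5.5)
Fix $\varepsilon > 0$. The plan is to split $f_k - f$ into a part supported on a set of finite measure and a tail, mirroring the proof of Theorem~\ref{Thm:ConvLoc}.

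First, we use the global condition to handle the tail. By the second assumption there is $n \in \mathbb{N}$ with $\delta_X A(n) < \varepsilon$. Hence there is a set $E_0 \subseteq \mathcal{R}$ with $\mu(E_0) \leq n < \infty$ such that
\begin{equation*}
    \sup_{g \in A} \norm{g \chi_{\mathcal{R} \setminus E_0}}_X < \varepsilon .
\end{equation*}
Since $f \in A$ by definition of $A$, this bound holds simultaneously for every $f_k$ and for $f$. The point of including $f$ in $A$ is precisely to avoid any Fatou-type argument here.

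Next, we use the uniform convergence on the finite-measure set. Apply the first assumption with $E = E_0$ to find $k_0$ such that
\begin{equation*}
    \norm{(f_k - f)\chi_{E_0}}_{L^\infty} < \varepsilon \norm{\chi_{E_0}}_X^{-1} \quad \text{for all } k \geq k_0 .
\end{equation*}
Here $\norm{\chi_{E_0}}_X < \infty$ by \ref{P4}. If $\mu(E_0) = 0$, this term simply vanishes. Then $\lvert f_k - f \rvert \chi_{E_0} \leq \varepsilon \norm{\chi_{E_0}}_X^{-1} \chi_{E_0}$ a.e., so the lattice property \ref{P2} and homogeneity give $\norm{(f_k - f)\chi_{E_0}}_X \leq \varepsilon$.

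Finally, we combine the pieces using the quasi-triangle inequality \ref{Q1}. For $k \geq k_0$,
\begin{equation*}
    \norm{f_k - f}_X \lesssim \norm{(f_k - f)\chi_{E_0}}_X + \norm{f_k \chi_{\mathcal{R}\setminus E_0}}_X + \norm{f \chi_{\mathcal{R}\setminus E_0}}_X < 3\varepsilon ,
\end{equation*}
where the implied constant depends only on the modulus of concavity.

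I expect no genuine obstacle. The only point that needs care is to choose $E_0$ before invoking the $L^\infty$ convergence, since that convergence is assumed only set by set.
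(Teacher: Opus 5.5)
Your proposal is correct and follows essentially the same route as the paper's proof. You choose a finite-measure set from $\delta_X A(n) < \varepsilon$ (using $f \in A$ for the tail of $f$), then apply the $L^\infty$ convergence on that set scaled by $\lVert \chi_{E_0} \rVert_X^{-1}$, and finish with the quasi-triangle inequality; your remark on the degenerate case $\mu(E_0) = 0$ is a harmless extra detail the paper leaves implicit.
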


\begin{proof}
    Fix $\varepsilon > 0$ and find in turn some $n \in \mathbb{N}$ such that $\delta_X A(n) < \varepsilon$ and some set $F_0$ with $\mu(F_0) \leq n$ such that
    \begin{equation*}
        \lVert g \chi_{\mathcal{R} \setminus F_0} \rVert_X < \varepsilon
    \end{equation*} 
    for every $g \in A$. We proceed to find some $k_0 \in \mathbb{N} $ such that
    \begin{equation*}
        \lVert (f_k - f) \chi_{F_0} \rVert_{L^{\infty}} < \frac{\varepsilon}{\lVert \chi_{F_0} \rVert_X}
    \end{equation*}
    for every $k \geq k_0$. Then we have for every such $k$
    \begin{equation*}
    \begin{split}
        \lVert f_k - f \rVert_X \lesssim \lVert (f_k - f) \chi_{F_0} \rVert_X + \lVert f \chi_{\mathcal{R} \setminus F_0} \rVert_X + \lVert f_k \chi_{\mathcal{R} \setminus F_0}  \rVert_X
        \lesssim \varepsilon.
    \end{split}
    \end{equation*}
\end{proof}

For the sake of consistency and in preparation for another result, we also present a version of the last Theorem that works with a variant of the stronger global condition from the parts \ref{Thm:CharConv_iii} of Theorems~\ref{Thm:CharConv} and \ref{Thm:CharComp}.

\begin{theorem} \label{Thm:ConvGlobB}
    Let $\norm{\cdot}_X$ be a quasi-Banach function norm and let $X$ be the corresponding quasi-Banach function space. Let $f_k$ be a sequence of functions in $X$, $f \in X$, and denote $A = \{ f_k; \; k \in \mathbb{N} \} \cup \{ f \}$. Assume that there is a sequence $\mathcal{R}_n$ of subsets of $\mathcal{R}$ satisfying $\mu(\mathcal{R}_n) < \infty$ and $\mathcal{R}_n \nearrow \mathcal{R}$ such that the following two conditions hold:
    \begin{enumerate}
        \item for every $n \in \mathbb{N}$, we have that $f_k \chi_{\mathcal{R}_n} \to f \chi_{\mathcal{R}_n}$ in $(L^{\infty}, \lVert \cdot \rVert_{L^{\infty}})$,
        \item $\lim_{n \to \infty} \sup_{g \in A} \lVert g \chi_{\mathcal{R} \setminus \mathcal{R}_n} \rVert = 0$.
    \end{enumerate} 
    Then $f_k \to f$ in $(X, \lVert \cdot \rVert_X)$.
\end{theorem}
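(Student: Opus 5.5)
The proof will follow the proof of Theorem~\ref{Thm:ConvGlob} almost verbatim. The only change is that the exceptional set $F_0$ is now taken to be one of the prescribed sets $\mathcal{R}_n$. This is exactly what lets us get away with the weaker local hypothesis (i): uniform convergence is only required on the sets $\mathcal{R}_n$, not on every set of finite measure.

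Fix $\varepsilon > 0$. First, using assumption (ii), I choose $n \in \mathbb{N}$ such that
\begin{equation*}
    \sup_{g \in A} \lVert g \chi_{\mathcal{R} \setminus \mathcal{R}_n} \rVert_X < \varepsilon .
\end{equation*}
Since $f \in A$ and every $f_k \in A$, this gives simultaneously $\lVert f \chi_{\mathcal{R} \setminus \mathcal{R}_n} \rVert_X < \varepsilon$ and $\lVert f_k \chi_{\mathcal{R} \setminus \mathcal{R}_n} \rVert_X < \varepsilon$ for all $k$.

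Next, since $\mu(\mathcal{R}_n) < \infty$, axiom \ref{P4} gives $\lVert \chi_{\mathcal{R}_n} \rVert_X < \infty$. If $\lVert \chi_{\mathcal{R}_n} \rVert_X = 0$, then $\mathcal{R}_n$ is a null set and this step is trivial. Otherwise, assumption (i) applied to this fixed $n$ yields $k_0 \in \mathbb{N}$ such that
\begin{equation*}
    \lVert (f_k - f) \chi_{\mathcal{R}_n} \rVert_{L^{\infty}} < \frac{\varepsilon}{\lVert \chi_{\mathcal{R}_n} \rVert_X} \qquad \text{for all } k \geq k_0 .
\end{equation*}
The lattice property \ref{P2} together with homogeneity then gives $\lVert (f_k - f)\chi_{\mathcal{R}_n} \rVert_X < \varepsilon$ for $k \geq k_0$.

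Finally, I split $f_k - f = (f_k - f)\chi_{\mathcal{R}_n} + f_k \chi_{\mathcal{R} \setminus \mathcal{R}_n} - f \chi_{\mathcal{R} \setminus \mathcal{R}_n}$. Applying the quasi-triangle inequality \ref{Q1}\ref{Q1c} twice (together with \ref{P2} and $\lVert \lvert h \rvert \rVert_X = \lVert h \rVert_X$, since the axioms are stated for non-negative functions), I get for $k \geq k_0$
\begin{equation*}
    \lVert f_k - f \rVert_X \lesssim \lVert (f_k - f) \chi_{\mathcal{R}_n} \rVert_X + \lVert f_k \chi_{\mathcal{R} \setminus \mathcal{R}_n} \rVert_X + \lVert f \chi_{\mathcal{R} \setminus \mathcal{R}_n} \rVert_X < 3\varepsilon .
\end{equation*}
The implicit constant depends only on the modulus of concavity $C$, so this proves $f_k \to f$ in $(X, \lVert \cdot \rVert_X)$.

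There is no genuine obstacle. The only point needing care is the order of choices: $n$ must be fixed first from the global condition (ii), and only then is the uniform convergence on that single $\mathcal{R}_n$ invoked. This is precisely why a single sequence $\mathcal{R}_n$ suffices, and why neither monotonicity of $\mathcal{R}_n$ nor $\mathcal{R}_n \nearrow \mathcal{R}$ is really used, beyond making hypothesis (ii) plausible.
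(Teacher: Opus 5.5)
Your proposal is correct and follows the paper's own proof essentially verbatim. The paper first fixes $n$ from (ii), then takes $k_0$ from the uniform convergence on $\mathcal{R}_n$ with threshold $\varepsilon/\lVert \chi_{\mathcal{R}_n} \rVert_X$, and finishes with the same three-term quasi-triangle split. Your handling of the degenerate case $\lVert \chi_{\mathcal{R}_n} \rVert_X = 0$ is a harmless extra detail the paper leaves implicit.
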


\begin{proof}
    Fix $\varepsilon > 0$ and find in turn some $n \in \mathbb{N}$ such that
    \begin{equation*}
        \lVert g \chi_{\mathcal{R} \setminus \mathcal{R}_{n}} \rVert_X < \varepsilon
    \end{equation*} 
    for every $g \in A$ and some $k_0 \in \mathbb{N} $ such that
    \begin{equation*}
        \lVert (f_k - f) \chi_{\mathcal{R}_{n}} \rVert_{L^{\infty}} < \frac{\varepsilon}{\lVert \chi_{\mathcal{R}_{n}} \rVert_X}
    \end{equation*}
    for every $k \geq k_0$. Then we have for every such $k$
    \begin{equation*}
    \begin{split}
        \lVert f_k - f \rVert_X \lesssim \lVert (f_k - f) \chi_{\mathcal{R}_{n}} \rVert_X + \lVert f \chi_{\mathcal{R} \setminus \mathcal{R}_{n}} \rVert_X + \lVert f_k \chi_{\mathcal{R} \setminus \mathcal{R}_{n}}  \rVert_X
        \lesssim \varepsilon.
    \end{split}
    \end{equation*}
\end{proof}

If we do not consider a fixed sequence, but rather a set $A$ with an appropriate property, then we may obtain the following characterisations.

\begin{theorem} \label{Thm:CharBoth}
    Let $\norm{\cdot}_X$ be a quasi-Banach function norm and let $X$ be the corresponding quasi-Banach function space. Let $A \subseteq X$ have the following property:
    \begin{equation} \label{Thm:CharBoth:Ideal}
        \text{If } f \in A \text{ and } g \leq f \text{ $\mu$-a.e.~then } g \in A.
    \end{equation}
    Then we have the following four characterisations:
    \begin{enumerate}
        \item  \label{Thm:CharBothLoc} The following two statements are equivalent:
        \begin{enumerate}
            \item $\lim_{n \to \infty} \sigma_X A(n^{-1}) = 0$.
            \item Every sequence $f_k$ of functions in $A$ that satisfies
            \begin{equation}  \label{Thm:CharBoth:LocCond}
                \lim_{k \to \infty} \mu(\supp f_k  ) = 0
            \end{equation}
            converges to $0$ in $(X, \lVert \cdot \rVert_X)$.
        \end{enumerate}
        \item  \label{Thm:CharBothGlob} The following two statements are equivalent:
        \begin{enumerate}
            \item $\lim_{n \to \infty} \delta_X A(n) = 0$.
            \item \label{Thm:CharBothGlobCond} Every net $f_{\iota}$ of functions in $A$ that satisfies 
            \begin{equation} \label{Thm:CharBoth:GlobCond}
                f_{\iota} \chi_E \to 0 \text{ in } (L^{\infty}, \lVert \cdot \rVert_{L^{\infty}})  \text{ for every } E \subseteq \mathcal{R} \text{ with } \mu(E) < \infty
            \end{equation}
            converges to $0$ in $(X, \lVert \cdot \rVert_X)$.
        \end{enumerate}
        \item  \label{Thm:CharBothGlobB} Let $\mathcal{R}_n$ be a sequence of subsets of $\mathcal{R}$ such that $\mu(\mathcal{R}_n) < \infty$ and $\mathcal{R}_n \nearrow \mathcal{R}$. Then the following two statements are equivalent:
        \begin{enumerate}
            \item $\lim_{n \to \infty} \sup_{f \in A} \lVert f \chi_{\mathcal{R} \setminus \mathcal{R}_n} \rVert_X = 0$.
            \item \label{Thm:CharBothGlobCondB} Every sequence $f_{k}$ of functions in $A$ that satisfies 
            \begin{equation} \label{Thm:CharBoth:GlobCondB}
                f_{k} \chi_{\mathcal{R}_n} \to 0 \text{ in } (L^{\infty}, \lVert \cdot \rVert_{L^{\infty}})  \text{ for every } n \in \mathbb{N}
            \end{equation}
            converges to $0$ in $(X, \lVert \cdot \rVert_X)$.
        \end{enumerate}
        \item  \label{Thm:CharBothGlobC} The following two statements are equivalent:
        \begin{enumerate}
            \item It holds for every sequence $\mathcal{R}_n$ of subsets of $\mathcal{R}$ such that $\mu(\mathcal{R}_n) < \infty$ and $\mathcal{R}_n \nearrow \mathcal{R}$ that $\lim_{n \to \infty} \sup_{f \in A} \lVert f \chi_{\mathcal{R} \setminus \mathcal{R}_n} \rVert_X = 0$.
            \item Every sequence $f_{k}$ of functions in $A$ for which there exists a sequence $\mathcal{R}_n$ of subsets of $\mathcal{R}$ satisfying $\mu(\mathcal{R}_n) < \infty$ and $\mathcal{R}_n \nearrow \mathcal{R}$ such that the sequence $f_k$ satisfies
            \begin{equation*} \label{Thm:CharBoth:GlobCondC}
                f_{k} \chi_{\mathcal{R}_n} \to 0 \text{ in } (L^{\infty}, \lVert \cdot \rVert_{L^{\infty}}),  \text{ for every } n \in \mathbb{N},
            \end{equation*}
            converges to $0$ in $(X, \lVert \cdot \rVert_X)$.
        \end{enumerate}
    \end{enumerate} 
\end{theorem}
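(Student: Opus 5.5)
In each of the four parts the implication from (a) to (b) is a direct application of Theorems~\ref{Thm:ConvLoc}, \ref{Thm:ConvGlob} and \ref{Thm:ConvGlobB} with $f = 0$, after minor adjustments. The converse implications will be proved by contradiction. We build a bad sequence (or net) inside $A$ by truncating functions of $A$, which is allowed by the ideal property \eqref{Thm:CharBoth:Ideal}. Here and below we read \eqref{Thm:CharBoth:Ideal} as "$\lvert g\rvert \leq \lvert f\rvert$ implies $g \in A$", so that $f\chi_E \in A$ whenever $f \in A$.

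\textbf{Part \ref{Thm:CharBothLoc}.} For (a)$\Rightarrow$(b), take $f_k \in A$ with $\mu(\supp f_k) \to 0$. Given $\varepsilon>0$, choose $n$ with $\sigma_X A(n^{-1})<\varepsilon$. Then for large $k$ we have $\lVert f_k\rVert_X = \lVert f_k\chi_{\supp f_k}\rVert_X \leq \sigma_X A(n^{-1})<\varepsilon$. For (b)$\Rightarrow$(a), suppose (a) fails. Then there are $\varepsilon>0$, $g_n\in A$ and sets $E_n$ with $\mu(E_n)\leq n^{-1}$ such that $\lVert g_n\chi_{E_n}\rVert_X>\varepsilon$. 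Put $f_n := g_n\chi_{E_n}\in A$. Since $\mu(\supp f_n)\leq n^{-1}\to 0$ but $\lVert f_n\rVert_X>\varepsilon$, this contradicts (b).

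\textbf{Part \ref{Thm:CharBothGlob}.} For (a)$\Rightarrow$(b), fix $\varepsilon$ and choose $F_0$ with $\mu(F_0)<\infty$ and $\sup_{g\in A}\lVert g\chi_{\mathcal{R}\setminus F_0}\rVert_X<\varepsilon$. Along the net we eventually have $\lVert f_\iota\chi_{F_0}\rVert_{L^\infty}<\varepsilon\lVert\chi_{F_0}\rVert_X^{-1}$. The lattice property together with the quasi-triangle inequality then gives $\lVert f_\iota\rVert_X\lesssim\varepsilon$; this is exactly the proof of Theorem~\ref{Thm:ConvGlob}, which never used that the index set is $\mathbb{N}$. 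The converse is where nets are needed, and I expect it to be the main step. Suppose (a) fails with constant $\varepsilon$. Index the net by $\mathcal{I}=\{E\subseteq\mathcal{R};\ \mu(E)<\infty\}$, directed by inclusion. For $E\in\mathcal{I}$ we have $\delta_X A(\mu(E))\geq\delta_X A(\lceil\mu(E)\rceil)\geq\varepsilon$, since $\delta_X A$ is non-increasing and bounded below by the failing limit. Hence there is $g_E\in A$ with $\lVert g_E\chi_{\mathcal{R}\setminus E}\rVert_X>\varepsilon/2$. Set $f_E:=g_E\chi_{\mathcal{R}\setminus E}\in A$. For any fixed $F$ of finite measure and any $E\supseteq F$, the function $f_E\chi_F$ vanishes identically. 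So the net satisfies \eqref{Thm:CharBoth:GlobCond}, yet it does not converge to $0$, contradicting (b).

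\textbf{Parts \ref{Thm:CharBothGlobB} and \ref{Thm:CharBothGlobC}.} For part \ref{Thm:CharBothGlobB}, the implication (a)$\Rightarrow$(b) is Theorem~\ref{Thm:ConvGlobB} with $f=0$. For the converse, the failure of (a) gives $\varepsilon$ and, after passing to a subsequence $n_k$, functions $g_k\in A$ with $\lVert g_k\chi_{\mathcal{R}\setminus\mathcal{R}_{n_k}}\rVert_X>\varepsilon$. Put $f_k:=g_k\chi_{\mathcal{R}\setminus\mathcal{R}_{n_k}}$. For each fixed $n$, once $n_k\geq n$ we get $f_k\chi_{\mathcal{R}_n}=0$, so \eqref{Thm:CharBoth:GlobCondB} holds while $\lVert f_k\rVert_X>\varepsilon$. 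This contradicts (b).

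For part \ref{Thm:CharBothGlobC}, the implication (a)$\Rightarrow$(b) again follows from Theorem~\ref{Thm:ConvGlobB}: the sequence $\mathcal{R}_n$ witnessing (b) is one of the sequences covered by (a). For (b)$\Rightarrow$(a), suppose (a) fails for some particular $\mathcal{R}_n$. The construction from part \ref{Thm:CharBothGlobB} then produces a sequence in $A$ that satisfies the hypothesis of (b) with this very $\mathcal{R}_n$ and does not converge to $0$, contradicting (b).

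The only subtle point is in part \ref{Thm:CharBothGlob}: a sequence cannot be used there, because no countable family of finite-measure sets is cofinal in general. This is precisely why nets appear in the statement. The directed family $\mathcal{I}$ resolves it without any appeal to the local condition.
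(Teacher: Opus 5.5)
Your proof is correct and follows essentially the same route as the paper. Sufficiency comes from Theorems~\ref{Thm:ConvLoc}, \ref{Thm:ConvGlob} (whose argument works for nets) and \ref{Thm:ConvGlobB} with $f=0$, and necessity comes from truncating functions of $A$ to small sets, to the complements $\mathcal{R}\setminus\mathcal{R}_n$, or, in the net case, to complements of all finite-measure sets ordered by inclusion. Your explicit reading of the ideal property as $\lvert g\rvert\le\lvert f\rvert$ and your use of $\varepsilon/2$ when extracting $g_E$ from a limit equal to $\varepsilon$ are, if anything, slightly more careful than the paper's write-up.
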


The meaning and intention of the conditions \ref{Thm:CharBothGlob}\ref{Thm:CharBothGlobCond}, \ref{Thm:CharBothGlobB}\ref{Thm:CharBothGlobCondB}, and \ref{Thm:CharBothGlobC}\ref{Thm:CharBoth:GlobCondC} are probably not apparent; however, the discussion is not exactly brief, so we will present it after the proof.

\begin{proof}
    The sufficiency in all cases follows from Theorems~\ref{Thm:ConvLoc}, \ref{Thm:ConvGlob}, and \ref{Thm:ConvGlobB}, respectively; only in the case of Theorem~\ref{Thm:ConvGlob} one has to quickly realise that the argument works for nets as well (one literally only has to substitute $\iota$ instead of $k$).

    As for the necessity in \ref{Thm:CharBothLoc}, assume that
    \begin{equation*}
        \lim_{n \to \infty} \sigma_X A(n^{-1}) = 2\varepsilon > 0
    \end{equation*}
    and proceed to find for every $n \in \mathbb{N}$ some function $f_n \in A$ and some set $E_n \subseteq \mathcal{R}$ such that $\mu(E_n) < n^{-1}$ and $\lVert f_n \chi_{E_n} \rVert_X > \varepsilon$. Then the functions $f_n \chi_{E_n}$ belong to $A$ (as per \eqref{Thm:CharBoth:Ideal}), clearly satisfy \eqref{Thm:CharBoth:LocCond}, but they fail to converge to zero in $(X, \lVert \cdot \rVert_X)$.

    As for the necessity in \ref{Thm:CharBothGlob}, assume that
    \begin{equation*}
        \lim_{n \to \infty} \delta_X A(n) = \varepsilon > 0.
    \end{equation*}
    Thence we have for every set $E \subseteq \mathcal{R}$ with $\mu(E) < \infty$ some function $f_E \in A$ with
    \begin{equation*}
        \lVert f_E \chi_{\mathcal{R} \setminus E} \rVert_X > \varepsilon.
    \end{equation*}
    Considering the natural inclusion-order on subsets of $\mathcal{R}$ with finite measure, we observe that $g_E = f_E \chi_{\mathcal{R} \setminus E}$ is a net of functions in $A$ (as per \eqref{Thm:CharBoth:Ideal}). Furthermore, given a fixed set $F \subseteq \mathcal{R}$ with $\mu(F) < \infty$ we have for every $E \supseteq F$ that $g_E \chi_F = 0$, whence the validity of \eqref{Thm:CharBoth:GlobCond} is obvious. However, the net clearly fails to converge to zero in $(X, \lVert \cdot \rVert_X)$.

    As for the necessity in \ref{Thm:CharBothGlobB}, assume that
    \begin{equation*}
        \lim_{n \to \infty} \sup_{g \in A} \lVert g \chi_{\mathcal{R} \setminus \mathcal{R}_n} \rVert_X = \varepsilon > 0.
    \end{equation*}
    Thence we have for every $n \in \mathbb{N}$ some function $f_n \in A$ with
    \begin{equation*}
        \lVert f_n \chi_{\mathcal{R} \setminus \mathcal{R}_n} \rVert_X > \varepsilon.
    \end{equation*}
    The functions $f_n \chi_{\mathcal{R} \setminus \mathcal{R}_n}$ belong to $A$ (as per \eqref{Thm:CharBoth:Ideal}), clearly satisfy \eqref{Thm:CharBoth:GlobCondB}, but fail to converge to zero in $(X, \lVert \cdot \rVert_X)$.

    Finally, for the necessity in \ref{Thm:CharBothGlobC}, one only considers a sequence $\mathcal{R}_n$ for which 
    \begin{equation*}
        \lim_{n \to \infty} \sup_{g \in A} \lVert g \chi_{\mathcal{R} \setminus \mathcal{R}_n} \rVert_X = \varepsilon > 0.
    \end{equation*}
    Then the already established necessity in \ref{Thm:CharBothGlobB} applies.
\end{proof} 

Let us now provide some commentary. Firstly, we note that the condition \eqref{Thm:CharBoth:Ideal} is not very restrictive. Indeed, the validity of the conditions
\begin{enumerate}
    \item $\lim_{n \to \infty} \sigma_X A(n^{-1}) = 0$,
    \item $\lim_{n \to \infty} \delta_X A(n) = 0$,
    \item $\lim_{n \to \infty} \sup_{f \in A} \lVert f \chi_{\mathcal{R} \setminus \mathcal{R}_n} \rVert_X = 0$,
\end{enumerate}
for a given set $A$ is equivalent to the validity of the respective conditions for the set
\begin{equation*}
    \{ g \in \mathcal{M}_0 ; \; \exists f \in A, \, g \leq f\}.
\end{equation*}

Secondly, the rather unfortunate net-based characterisation in part \ref{Thm:CharBothGlob} is due to the fact that it is problematic to ensure that a sequence converges uniformly on every set of finite measure, yet that is what we need to combine with the condition $\delta_X A \to 0$ in order to obtain convergence in the quasinorm. The crux of the issue seems to be the fact that the space $L^{\infty}$ equipped with the locally convex topology induced by the family of seminorms
\begin{align*}
    f &\mapsto \esssup \, \lvert f \rvert \chi_E &\text{for every } E \subseteq \mathcal{R} \text{ with } \mu(E) < \infty 
\end{align*}
fails to be metrisable in general (e.g.~when the underlying measure space $(\mathcal{R}, \mu)$ is non-atomic and of infinite measure).

On the other hand, the condition:
\begin{equation*}
    \lim_{n \to \infty} \delta_X A(n) = 0
\end{equation*}
is equivalent to the statement:
\begin{itemize}[leftmargin=5mm]
    \item There exists a sequence $\mathcal{R}_n$ of subsets of $\mathcal{R}$ such that $\mu(\mathcal{R}_n) < \infty$ and $\mathcal{R}_n \nearrow \mathcal{R}$ for which 
    \begin{equation*}
        \lim_{n \to \infty} \sup_{f \in A} \lVert f \chi_{\mathcal{R} \setminus \mathcal{R}_n} \rVert_X = 0.
    \end{equation*}
\end{itemize}
We can thus apply part \ref{Thm:CharBothGlobB} on this specific sequence of sets. However, the resulting characterising condition would then read:
\begin{itemize}[leftmargin=5mm]
    \item There exists a sequence $\mathcal{R}_n$ of subsets of $\mathcal{R}$ such that $\mu(\mathcal{R}_n) < \infty$ and $\mathcal{R}_n \nearrow \mathcal{R}$ for which it holds that every sequence $f_k$ of functions in $A$ that satisfies
    \begin{equation*}
                f_{k} \chi_{\mathcal{R}_n} \to 0 \text{ in } (L^{\infty}, \lVert \cdot \rVert_{L^{\infty}})  \text{ for every } n \in \mathbb{N}
    \end{equation*}
    converges to $0$ in $(X, \lVert \cdot \rVert_X)$.
\end{itemize}
This does not seem to be an improvement over the net-based condition in part \ref{Thm:CharBothGlob}.

\subsection{An alternative approach} \label{SecAmalgamTypeAlternative}

Next, we present an alternative approach to the problem at hand which yields rather interesting results that could not have been easily formulated without the amalgam approach and thus have no classical analogue. The principal idea is that the condition 
\begin{equation*}
    \lim_{n \to \infty} \delta_X A(n) = 0
\end{equation*}
can be weakened to just
\begin{equation*}
    \lim_{n \to \infty} \sup_{g \in A} \delta_X g(n) = 0,
\end{equation*}
at the cost of strengthening the $X$-independent assumption $f_k \to f$ in $(\mathcal{M}_0, \mu_{\loc} )$ to full convergence in measure and adding some further assumptions on $X$ and $(\mathcal{R}, \mu)$. Unfortunately, these necessary additional assumptions seem to be distinct for the individual implications, whence we present said implications in separate statements. Furthermore, as the topology that induces the full convergence in measure is rather less well-behaved than $\mu_{\loc}$ (e.g.~scalar multiplication fails to be continuous on the entirety of $\mathcal{M}_0$, see e.g.~\cite[Chapter~IV, Section~11]{DunfordSchwartz56}), we only present the case of convergence.

\begin{theorem}\label{Thm:ConvAlt}
    Let $(\mathcal{R}, \mu)$ be resonant, let $\norm{\cdot}_X$ be a quasi-Banach function norm, let $X$ be the corresponding quasi-Banach function space, and assume that $\varphi_X^{\max}(t) < \infty$ for all $t \in [0, \infty)$. Let $f_k$ be a sequence of functions in $X$, $f \in X$, and denote $A = \{ f_k; \; k \in \mathbb{N} \} \cup \{ f \}$. Assume that
    \begin{enumerate}
        \item $f_k \to f$ in measure,
        \item $\lim_{n \to \infty} \sup_{g \in A} \sigma_X g(n^{-1}) = 0$,
        \item $\lim_{n \to \infty} \sup_{g \in A} \delta_X g(n) = 0$.
    \end{enumerate} 
    Then $f_k \to f$ in $(X, \lVert \cdot \rVert_X)$.
\end{theorem}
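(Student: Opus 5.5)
The plan is to split $f_k - f$ into three pieces: a piece living on a set of small measure, a piece living on a set of bounded measure where $\lvert f_k - f\rvert$ is uniformly small, and a ``tail'' piece. The first is controlled by the uniform local condition (ii), the second by $\varphi_X^{\max}$, and the third by the per-function global condition (iii). The key point is that the exceptional sets in (iii) may depend on the function, but their measure is bounded by $n$ uniformly. Hence the union of the exceptional sets for $f$ and for $f_k$ has measure at most $2n$, independently of $k$.

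Fix $\varepsilon > 0$.
\begin{enumerate}
\item Using (iii), choose $n \in \mathbb{N}$ such that for every $g \in A$ there is a set $F_g$ with $\mu(F_g) \leq n$ and $\lVert g \chi_{\mathcal{R}\setminus F_g}\rVert_X < \varepsilon$. Such a set exists because the infimum defining $\delta_X g(n)$ is $<\varepsilon$. Write $F := F_f$ and $F_k := F_{f_k}$.
\item Put $M := \sup\{\lVert \chi_G\rVert_X;\ \mu(G) \leq 2n\}$ and check that $M < \infty$. This is where resonance enters. By Remark~\ref{Rem:FundMono}, $\varphi_X^{\max}$ is non-decreasing on the range of $\mu$. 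In the non-atomic case, every $G$ with $\mu(G)\leq 2n$ satisfies $\lVert\chi_G\rVert_X \leq \varphi_X^{\max}(\min\{2n,\mu(\mathcal{R})\})$; if $\mu(\mathcal{R})\le 2n$ one simply uses $\lVert\chi_{\mathcal{R}}\rVert_X<\infty$. In the atomic case with equal atoms, the measure of $G$ is a multiple of the atom size, and the largest admissible value $\leq 2n$ is in the range. In either case the hypothesis $\varphi_X^{\max}<\infty$ gives $M<\infty$.
\item Set $\eta := \varepsilon / M$ and $E_k := \{\lvert f_k - f\rvert > \eta\}$. Using (ii), pick $m$ with $\sup_{g\in A}\sigma_X g(m^{-1}) < \varepsilon$. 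Convergence in measure gives $k_0$ with $\mu(E_k) < m^{-1}$ for all $k \geq k_0$.
\end{enumerate}

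For $k\ge k_0$, I then decompose $\mathcal{R}$ into $E_k$, $(F\cup F_k)\setminus E_k$, and $\mathcal{R}\setminus(F\cup F_k\cup E_k)$. Applying the quasi-triangle inequality a bounded number of times, with constant depending only on the modulus of concavity, gives
\begin{equation*}
\lVert f_k - f\rVert_X \lesssim \lVert f_k\chi_{E_k}\rVert_X + \lVert f\chi_{E_k}\rVert_X + \eta\lVert \chi_{F\cup F_k}\rVert_X + \lVert f_k\chi_{\mathcal{R}\setminus F_k}\rVert_X + \lVert f\chi_{\mathcal{R}\setminus F}\rVert_X .
\end{equation*}
Each term is estimated as follows.
\begin{enumerate}
\item The first two terms are $<\varepsilon$ by the choice of $m$, since $\mu(E_k)<m^{-1}$.
\item The third term is $\leq \eta M = \varepsilon$. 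This uses the lattice property, $\lvert f_k-f\rvert\le\eta$ off $E_k$, and $\mu(F\cup F_k)\le 2n$.
\item For the last two terms, note that $\mathcal{R}\setminus(F\cup F_k)$ is contained in both $\mathcal{R}\setminus F_k$ and $\mathcal{R}\setminus F$. On this set we bound $\lvert f_k-f\rvert\le\lvert f_k\rvert\chi_{\mathcal{R}\setminus F_k}+\lvert f\rvert\chi_{\mathcal{R}\setminus F}$, and by the lattice property each term is $<\varepsilon$.
\end{enumerate}
Hence $\lVert f_k - f\rVert_X \lesssim \varepsilon$ for $k \geq k_0$.

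The main obstacle is step 2, obtaining a bound on $\lVert\chi_G\rVert_X$ that is uniform over all sets of measure at most $2n$. This is the only place where resonance and finiteness of $\varphi_X^{\max}$ are used, and the care needed is in handling values of the measure not in the range of $\mu$, in both the atomic and non-atomic cases. Full convergence in measure, rather than $\mu_{\loc}$, is essential because the sets $F_k$ move with $k$, so $\lvert f_k-f\rvert\le\eta$ is needed off a small set globally rather than on a fixed set of finite measure.
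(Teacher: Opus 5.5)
Your proof is correct and follows essentially the same route as the paper. It uses the same five-term splitting over $E_k=\{\lvert f_k-f\rvert>\eta\}$, the union $F_f\cup F_{f_k}$ of measure at most $2n$, and its complement, with resonance entering only to bound $\lVert\chi_{F_f\cup F_{f_k}}\rVert_X$ uniformly in $k$ via the monotonicity of $\varphi_X^{\max}$. Your constant $M$ is a slightly more careful version of the paper's $\varphi_X^{\max}(2n)$, since it also handles the case where $2n$ is not in the range of $\mu$.
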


Recall that the assumption that $\varphi_X^{\max}$ is finite is always satisfied when $(\mathcal{R}, \mu)$ is non-atomic (Theorem~\ref{lenka4.4}).

\begin{proof}
    Fix $\varepsilon > 0$ and find $n \in \mathbb{N}$ such that $\sup_{g \in A} \delta_X g(n) < \varepsilon$ and $\sup_{g \in A} \sigma_X g(n^{-1}) < \varepsilon$. Find further for any $g \in A$, some set $F_g$ with $\mu(F_g) \leq n$ such that
    \begin{equation*}
        \lVert g \chi_{\mathcal{R} \setminus F_g} \rVert_X < \varepsilon
    \end{equation*} 
    and denote $F_k = F_f \cup F_{f_k}$; we have $\mu(F_k) \leq 2n$. Finally, put
    \begin{equation*}
        E_k := \left \{ x \in \mathcal{R}; \; \lvert f_k - f \rvert > \frac{\varepsilon}{\varphi_X^{\max}(2n)} \right \},
    \end{equation*}
    where the finiteness of $\varphi_X^{\max}(2n)$ is included in our assumptions, and use the convergence in measure to find some $k_0 \in \mathbb{N}$ such that for every $k \geq k_0$ we have $\mu (E_k) < n^{-1}$. Then we have for any such $k$
    \begin{equation*}
    \begin{split}
        \lVert f_k - f \rVert_X &\lesssim \lVert f \chi_{E_k} \rVert_X + \lVert f_k \chi_{E_k} \rVert_X + \lVert (f_k - f) \chi_{F_k \setminus E_k} \rVert_X + \lVert f \chi_{\mathcal{R} \setminus F_k} \rVert_X + \lVert f_k \chi_{\mathcal{R} \setminus F_k} \rVert_X \\
        &< \varepsilon \left( 4 + \frac{\lVert \chi_{F_k} \rVert_X}{\varphi_X^{\max}(2n)} \right) \\
        &\lesssim \varepsilon.
    \end{split}
    \end{equation*}
    The last step employs the assumption that $(\mathcal{R}, \mu)$ is resonant (see Remark~\ref{Rem:FundMono}).
\end{proof} 

As for the converse implication, the $X$-dependent conditions are weaker than in Theorem~\ref{Thm:CharConv}, so we only have to consider the convergence in measure. Note that we find it somehow surprising that this result holds without assuming that $X$ is r.i.

\begin{theorem}\label{Thm:measureConvChar}
    Let $\norm{\cdot}_X$ be a quasi-Banach function norm and let $X$ be the corresponding quasi-Banach function space. Consider the following two statements:
    \begin{enumerate}
        \item \label{Thm:measureConvChar_i} $\norm{\cdot}_X$ is admissible,
        \item \label{Thm:measureConvChar_ii} for every $f_k,f\in X$ such that $f_k\to f$ in $(X, \lVert \cdot \rVert_X)$, it holds that $f_k\to f$ in measure.
    \end{enumerate}
    Then \ref{Thm:measureConvChar_ii} implies \ref{Thm:measureConvChar_i}. Furthermore, if $(\mathcal{R}, \mu)$ is resonant, then also \ref{Thm:measureConvChar_i} implies \ref{Thm:measureConvChar_ii}.
\end{theorem}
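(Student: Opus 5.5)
The plan is to prove both implications directly from the definition of the minimal fundamental function, testing everything against characteristic functions. Neither direction seems to need the machinery of Section~\ref{SecAmalgamType}, and resonance should enter only to guarantee that we can cut out subsets of a prescribed measure.

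\emph{\ref{Thm:measureConvChar_ii} implies \ref{Thm:measureConvChar_i}.} I argue by contraposition. Suppose $\norm{\cdot}_X$ is not admissible. Then there is some $t>0$ in the range of $\mu$ with $\varphi_X^{\min}(t)=0$. By the definition of the infimum, for every $k\in\mathbb{N}$ we can pick a set $E_k\subseteq\mathcal{R}$ with $\mu(E_k)=t$ and $\norm{\chi_{E_k}}_X<k^{-1}$. Put $f_k=\chi_{E_k}$ and $f=0$; these lie in $X$, and $f_k\to 0$ in $(X,\lVert\cdot\rVert_X)$. On the other hand, $\mu(\{\lvert f_k\rvert>1/2\})=\mu(E_k)=t>0$ for every $k$, so $f_k$ does not converge to $0$ in measure. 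Hence \ref{Thm:measureConvChar_ii} fails. This direction needs no assumption on $(\mathcal{R},\mu)$.

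\emph{\ref{Thm:measureConvChar_i} implies \ref{Thm:measureConvChar_ii} for resonant $(\mathcal{R},\mu)$.} Let $f_k\to f$ in $(X,\lVert\cdot\rVert_X)$ and suppose, for contradiction, that $f_k\not\to f$ in measure. Then there are $\varepsilon>0$, $\eta>0$ and a subsequence (not relabelled) such that the sets $E_k:=\{\lvert f_k-f\rvert>\varepsilon\}$ satisfy $\mu(E_k)\geq\eta$ for all $k$; note that $\mu(E_k)$ may be infinite. Next I fix a positive value $s$ in the range of $\mu$ and subsets $G_k\subseteq E_k$ with $\mu(G_k)=s$, as follows.
\begin{itemize}
    \item If $(\mathcal{R},\mu)$ is non-atomic, take $s=\eta$. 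By $\sigma$-finiteness, $E_k$ contains a subset of finite measure at least $\eta$. The Sierpiński intermediate value property for non-atomic measures then yields $G_k\subseteq E_k$ with $\mu(G_k)=\eta$.
    \item If $(\mathcal{R},\mu)$ is completely atomic with all atoms of measure $\alpha$, then $\mu(E_k)>0$ forces $E_k$ to contain an atom. Take $G_k$ to be that atom and $s=\alpha$.
\end{itemize}
Since $\varepsilon\chi_{G_k}\leq\lvert f_k-f\rvert$, the lattice property \ref{P2} and homogeneity give
\begin{equation*}
    \varepsilon\,\varphi_X^{\min}(s)\leq\varepsilon\norm{\chi_{G_k}}_X\leq\norm{f_k-f}_X\to 0 .
\end{equation*}
This forces $\varphi_X^{\min}(s)=0$, contradicting admissibility.

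The only step I expect to need some care is producing subsets $G_k\subseteq E_k$ of one common measure $s$, which must lie in the range of $\mu$ so that $\varphi_X^{\min}(s)$ is defined and positive. This is exactly where resonance is used: on a general measure space, atoms of varying (possibly shrinking) measure could make the sets $E_k$ incomparable. Monotonicity of $\varphi_X^{\min}$ (Remark~\ref{Rem:FundMono}) is not needed, because we choose subsets of exactly measure $s$.
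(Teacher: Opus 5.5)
Your proof is correct and follows essentially the same route as the paper's. For the first implication you use characteristic functions of sets of fixed measure $t$ with vanishing quasinorm. For the second you find, inside the sets $\{\lvert f_k - f\rvert > \varepsilon\}$, subsets of a common measure in the range of $\mu$, which forces $\varphi_X^{\min}=0$; your explicit non-atomic/equal-atom case split just spells out the step the paper compresses into a single appeal to resonance.
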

\begin{proof}
    We will prove both implications by contraposition. To show that \ref{Thm:measureConvChar_ii} implies \ref{Thm:measureConvChar_i} suppose that $\norm{\cdot}_X$ is not admissible. Hence, there exists some $t>0$ in the range of $\mu$ and a sequence of sets $E_k\subseteq \mathcal{R}$, $\mu(E_k)=t$ such that 
    \begin{equation*}
        \lim_{k\to\infty} \norm{\chi_{E_k}}_X = 0,
    \end{equation*}
    which means that $\chi_{E_k}\to 0$ in $(X, \lVert \cdot \rVert_X)$. However, we have that
    \begin{equation*}
        \mu\left(\left\{ x\in \R ; \; \chi_{E_k}\geq 1\right\}\right)=t,
    \end{equation*}
    and so $\chi_{E_k}$ does not converge to $0$ in measure. 

    For the other implication, suppose that there exist $f_k,f\in X$ such that $f_k\to f$ in $(X, \lVert \cdot \rVert_X)$ and $f_k\not\to f$ in measure. Then there exists $\varepsilon>0$ and a subsequence $f_{k_j}$ of $f_k$ such that the sets
    \begin{equation*}
        E_j:=\left\{ x\in \R ; \; \abs{f-f_{k_j}} > \varepsilon \right\}
    \end{equation*}
    satisfy
    \begin{equation*}
        \lim_{j\to\infty} \mu\left( E_{j} \right) > 0.
    \end{equation*}
    The assumption that $(\mathcal{R}, \mu)$ is resonant implies that there exists $t>0$ in the range of $\mu$ and $j_0\in\N$ such that $\mu\left(E_{j}\right) \geq t$ for all $j\geq j_0$. Hence, we may find for every $j \geq j_0$ some subset $\widetilde{E}_j \subseteq E_j$ satisfying $\mu ( \widetilde{E}_j ) = t$. Then
    \begin{equation*}
        0 \leq \varphi_X^{\min}(t) \leq \lVert \chi_{\widetilde{E}_j} \rVert_X \leq \frac{1}{\varepsilon} \lVert (f - f_{k_j}) \chi_{\widetilde{E}_j} \rVert_X \leq \frac{1}{\varepsilon} \lVert f - f_{k_j}\rVert_X \to 0.
    \end{equation*}
\end{proof}

Similarly as in the previous section, we also consider the condition $\sup_{g \in A} \delta_X g(n) \to 0$ separately.

\begin{theorem}
    Let $(\mathcal{R}, \mu)$ be resonant, let $\norm{\cdot}_X$ be a quasi-Banach function norm, let $X$ be the corresponding quasi-Banach function space, and assume that $\varphi_X^{\max}(t) < \infty$ for all $t \in [0, \infty)$. Let $f_k$ be a sequence of functions in $X$, $f \in X$, and denote $A = \{ f_k; \; k \in \mathbb{N} \} \cup \{ f \}$. Assume that
    \begin{enumerate}
        \item $f_k \to f$ in $(L^{\infty}, \lVert \cdot \rVert_{L^{\infty}})$,
        \item $\lim_{n \to \infty} \sup_{g \in A} \delta_X g(n) = 0$.
    \end{enumerate} 
    Then $f_k \to f$ in $(X, \lVert \cdot \rVert_X)$.
\end{theorem}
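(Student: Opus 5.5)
The plan is to mimic the proofs of Theorem~\ref{Thm:ConvGlob} and Theorem~\ref{Thm:ConvAlt}. The only change is that the excluded set is now allowed to depend on the function, which is exactly what assumption (ii) permits.

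Fix $\varepsilon > 0$. First, use (ii) to find $n \in \mathbb{N}$ with $\sup_{g \in A} \delta_X g(n) < \varepsilon$. For every $g \in A$, choose a set $F_g$ with $\mu(F_g) \leq n$ and $\lVert g \chi_{\mathcal{R} \setminus F_g} \rVert_X < \varepsilon$, and set $F_k = F_f \cup F_{f_k}$, so that $\mu(F_k) \leq 2n$. Second, using the uniform convergence (i), find $k_0$ such that
\begin{equation*}
    \lVert f_k - f \rVert_{L^\infty} < \frac{\varepsilon}{\varphi_X^{\max}(2n)} \quad \text{for all } k \geq k_0.
\end{equation*}
This is finite and positive by assumption; if $\varphi_X^{\max}(2n)=0$ the bound is trivial, and if $2n$ is not in the range of $\mu$ we replace it by a suitable value as below.

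For $k \geq k_0$ we then split
\begin{equation*}
    \lVert f_k - f \rVert_X \lesssim \lVert (f_k - f)\chi_{F_k} \rVert_X + \lVert f \chi_{\mathcal{R}\setminus F_k} \rVert_X + \lVert f_k \chi_{\mathcal{R}\setminus F_k} \rVert_X .
\end{equation*}
The last two terms are below $\varepsilon$ by the lattice property \ref{P2}, because $\mathcal{R} \setminus F_k \subseteq \mathcal{R} \setminus F_f$ and $\mathcal{R}\setminus F_k \subseteq \mathcal{R} \setminus F_{f_k}$. The first term is bounded by $\lVert f_k - f\rVert_{L^\infty} \lVert \chi_{F_k}\rVert_X$. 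This is where the one real point lies: we need $\lVert \chi_{F_k} \rVert_X \leq \varphi_X^{\max}(2n)$ uniformly in $k$, even though the sets $F_k$ vary with $k$ and have measure at most $2n$ rather than exactly $2n$.

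This is handled as in the last step of Theorem~\ref{Thm:ConvAlt}. By resonance, Remark~\ref{Rem:FundMono} shows that $\varphi_X^{\max}$ is non-decreasing on the range of $\mu$. Hence $\lVert \chi_{F_k}\rVert_X \leq \varphi_X^{\max}(\mu(F_k)) \leq \varphi_X^{\max}(t_n)$, where $t_n$ is chosen as follows.
\begin{itemize}
    \item In the non-atomic case, take $t_n = \min\{2n, \mu(\mathcal{R})\}$.
    \item In the atomic case with atoms of equal measure, take $t_n$ to be the largest value in the range not exceeding $2n$.
    \item If $\mu(\mathcal{R}) < 2n$, simply use $\lVert \chi_{\mathcal{R}}\rVert_X$.
\end{itemize}
The uniform bound and assumption (ii) give $\lVert f_k - f\rVert_X \lesssim \varepsilon$ for all $k \geq k_0$, with the implicit constant depending only on the modulus of concavity. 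This completes the argument.
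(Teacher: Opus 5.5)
Your proposal is correct and follows the paper's proof essentially step by step. It makes the same choice of $n$ via (ii), uses the same sets $F_k = F_f \cup F_{f_k}$ with $\mu(F_k)\le 2n$ and the same three-term splitting, and appeals in the same way to resonance (Remark~\ref{Rem:FundMono}) to bound $\lVert \chi_{F_k}\rVert_X$ uniformly by $\varphi_X^{\max}(2n)$. Your extra handling of the case where $2n$ is not in the range of $\mu$ is a small refinement that the paper passes over silently.
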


\begin{proof}
    Fix $\varepsilon > 0$ and find $n \in \mathbb{N}$ such that $\sup_{g \in A} \delta_X g(n) < \varepsilon$. Find further for any $g \in A$ some set $F_g$ with $\mu(F_g) \leq n$ such that
    \begin{equation*}
        \lVert g \chi_{\mathcal{R} \setminus F_g} \rVert_X < \varepsilon
    \end{equation*} 
    and denote $F_k = F_f \cup F_{f_k}$; we have $\mu(F_k) \leq 2n$. We proceed to find some $k_0 \in \mathbb{N} $ such that
    \begin{equation*}
        \lVert f_k - f \rVert_{L^{\infty}} < \frac{\varepsilon}{\varphi_X^{\max}(2n)}
    \end{equation*}
    for every $k \geq k_0$; note that the finiteness of $\varphi_X^{\max}(2n)$ is part of our assumptions. Then we have for every such $k$
    \begin{equation*}
    \begin{split}
         \lVert f_k - f \rVert_X &\lesssim \lVert (f_k - f) \chi_{F_k} \rVert_X + \lVert f \chi_{\mathcal{R} \setminus F_k} \rVert_X + \lVert f_k \chi_{\mathcal{R} \setminus F_k}  \rVert_X \\
         &\leq \varepsilon \left (2 + \frac{\lVert \chi_{F_k} \rVert_X}{\varphi_X^{\max}(2n)}  \right ) \\
         &\lesssim \varepsilon.
    \end{split}
    \end{equation*}
    The last step employs the assumption that $(\mathcal{R}, \mu)$ is resonant (see Remark~\ref{Rem:FundMono}).
\end{proof} 

As for the converse implication, the situation is different from that in the previous section. In this case we were only able to obtain it by adding additional a~priori assumptions on $A$ (namely, $\lVert \cdot \rVert_X$-boundedness) and $X$ (namely, $\lim_{t \to \infty} \varphi_X^{\min}(t) = \infty$). Given that those assumptions do not seem natural and that they turn the result into a close copy of \cite[Theorem~4.3]{MihulaPandy25}, we will be content to refer to said paper and not present the result here.

\subsection{Uniform absolute continuity of the quasinorm and almost-compact embeddings} \label{SecAmalgamTypeUACqN}
 
We now present the amalgam-type characterisation of the uniform absolute continuity of the quasinorm. Although the proof is virtually identical to that of Theorem~\ref{Thm:ac_char}, we present it for the sake of completeness and because the important characterisation of almost-compact embeddings is a direct consequence of this result.

\begin{theorem}\label{Thm:uac_char}
    Let $\norm{\cdot}_X$ be a quasi-Banach function norm and let $X$ be the corresponding quasi-Banach function space. Let $A \subseteq X$. Then $A$ has uniformly absolutely continuous quasinorm if and only if the following two conditions hold:
    \begin{equation}\label{Thm:uac_char:e_loc}
        \lim_{n\rightarrow\infty} \sigma_X A\left(n^{-1}\right) = 0,
    \end{equation}
    and
    \begin{equation}\label{Thm:uac_char:e_glob}
        \lim_{n\rightarrow\infty} \delta_X A(n) = 0.
    \end{equation}
\end{theorem}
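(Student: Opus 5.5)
We follow the proof of Theorem~\ref{Thm:ac_char} essentially verbatim, replacing each norm of a single function by the supremum over $A$. There are two necessity arguments and one sufficiency argument.

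\emph{Necessity of \eqref{Thm:uac_char:e_loc}.} We argue by contradiction. Since $\sigma_X A$ is non-decreasing, the limit in \eqref{Thm:uac_char:e_loc} exists; suppose it equals $2\varepsilon>0$. Then for every $n$ we pick a set $E_n$ with $\mu(E_n)\le n^{-1}$ such that $\sup_{f\in A}\lVert f\chi_{E_n}\rVert_X>\varepsilon$. Since $\mu(E_n)\to 0$, we have $\chi_{E_n}\to 0$ in measure, so some subsequence satisfies $\chi_{E_{n_k}}\to 0$ $\mu$-a.e. Uniform absolute continuity then gives $\sup_{f\in A}\lVert f\chi_{E_{n_k}}\rVert_X\to 0$, which is a contradiction.

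\emph{Necessity of \eqref{Thm:uac_char:e_glob}.} By $\sigma$-finiteness we choose $E_n\nearrow\mathcal{R}$ with $\mu(E_n)<n$. Then $\chi_{\mathcal{R}\setminus E_n}\to 0$ a.e., so uniform absolute continuity yields
\begin{equation*}
\delta_X A(n)\le \sup_{f\in A}\lVert f\chi_{\mathcal{R}\setminus E_n}\rVert_X\to 0.
\end{equation*}

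\emph{Sufficiency.} Let $\chi_{E_n}\to 0$ a.e. and fix $\varepsilon>0$.
\begin{enumerate}
\item By \eqref{Thm:uac_char:e_glob}, choose $n_0$ and a set $E_0$ with $\mu(E_0)\le n_0$ such that $\sup_{f\in A}\lVert f\chi_{\mathcal{R}\setminus E_0}\rVert_X<\varepsilon$. It is essential here that $E_0$ is independent of $f$; this is exactly what the inf–sup order in $\delta_X A$ provides.
\item Set $\widetilde E_n=E_n\cap E_0$. By dominated convergence on the finite-measure set $E_0$, we get $\mu(\widetilde E_n)\to 0$.
\item By \eqref{Thm:uac_char:e_loc}, choose $n_1$ with $\sigma_X A(n_1^{-1})<\varepsilon$. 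For $n$ large we have $\mu(\widetilde E_n)\le n_1^{-1}$, and hence $\sup_{f\in A}\lVert f\chi_{\widetilde E_n}\rVert_X<\varepsilon$.
\item The quasi-triangle inequality (Q1)(c) together with the lattice property \ref{P2} then gives, uniformly in $f\in A$,
\begin{equation*}
\lVert f\chi_{E_n}\rVert_X\le C\left(\lVert f\chi_{\widetilde E_n}\rVert_X+\lVert f\chi_{\mathcal{R}\setminus E_0}\rVert_X\right)<2C\varepsilon.
\end{equation*}
\end{enumerate}

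We expect no real obstacle. The one point needing care is that the set $E_0$ must be chosen uniformly over $A$, and the definition of $\delta_X A$ supplies exactly this. The subsequence argument in the first necessity step is the only place where we pass from convergence in measure to a.e. convergence.
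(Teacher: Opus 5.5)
Your proposal is correct and follows the paper's proof essentially step by step. It uses the same subsequence argument for \eqref{Thm:uac_char:e_loc}, the same exhaustion $E_n \nearrow \mathcal{R}$ for the global condition, and the same splitting $E_n \subseteq (E_n\cap E_0)\cup(\mathcal{R}\setminus E_0)$ with a uniformly chosen $E_0$ for sufficiency. The only difference is cosmetic: you argue the necessity of \eqref{Thm:uac_char:e_loc} by contradiction rather than directly via $\sigma_X A(n^{-1})-\varepsilon<\sup_{f\in A}\lVert f\chi_{E_n}\rVert_X$ and monotonicity; the case of an infinite limit goes through the same way with any $\varepsilon>0$.
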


\begin{proof}
    We start with the necessity for~\eqref{Thm:uac_char:e_loc}. Let $\varepsilon>0$. 
    For every $n\in\N$, we find a set $E_n\subseteq \mathcal{R}$ such that $\mu(E_n)\leq n^{-1}$ and 
    \begin{equation*}
        \sigma_X A\left(n^{-1}\right)  -\varepsilon < \sup_{f \in A} \norm{f \chi_{E_n}}_X.
    \end{equation*}
    We have $\mu(E_n)\rightarrow 0$. Hence, there exists a subsequence $E_{n_{k}}$ such that $\chi_{E_{n_{k}}}\rightarrow 0$ a.e. Because $A$ has uniformly absolutely continuous quasinorm, we get that there exists $k_0\in\N$ such that we have for every $k\geq k_0$ and every $f \in A$ that
    \begin{equation*}
        \norm{f \chi_{E_{n_{k}}}}_X < \varepsilon.
    \end{equation*}
    Thus, we obtain for every $k\geq k_0$ that
    \begin{equation*}
        \sigma_X A\left(n_k^{-1}\right) < \sup_{f \in A} \norm{f \chi_{E_{n_k}}}_X + \varepsilon < 2 \varepsilon,
    \end{equation*}
    which is enough due to the monotonicity of our sequence. 
    
    For the necessity of~\eqref{Thm:uac_char:e_glob}, we use the $\sigma$-finiteness of $(\mathcal{R},\mu)$ to find a sequence of sets $E_n\subseteq \mathcal{R}$ such that $E_n \nearrow \mathcal{R}$ and $\mu(E_n)<n$ for every $n\in\N$. Then $\chi_{\mathcal{R}\setminus E_n}\rightarrow 0$ a.e. Let $\varepsilon >0$. Because $A$ has uniformly absolutely continuous quasinorm, we find $n_0\in\N$ such that for every $n\geq n_0$ we have
    \begin{equation*}
        \sup_{f \in A} \norm{f \chi_{\mathcal{R}\setminus E_n}}_X<\varepsilon.
    \end{equation*}
    Thence we obtain that for every $n\geq n_0$
    \begin{equation*}
        \delta_X A(n) \leq \sup_{f \in A} \norm{f \chi_{\mathcal{R}\setminus E_n}}_X < \varepsilon.
    \end{equation*} 
    
    For sufficiency, let $\varepsilon>0$ and let $\chi_{E_n}\rightarrow 0$ a.e.~for some $E_n \subseteq \mathcal{R}$. By~\eqref{Thm:uac_char:e_glob}, 
    we find $n_0\in\N$ such that 
    \begin{equation*}
        \delta_X A(n_0) < \varepsilon.
    \end{equation*}
    Hence, there exists a set $E_0$ such that $\mu(E_0)<n_0$ and 
    \begin{equation}\label{E0_unif}
         \sup_{f \in A} \norm{f \chi_{\mathcal{R}\setminus E_0}}_X<\varepsilon.
    \end{equation}
    We denote $\widetilde{E}_n = E_n \cap E_0$ and note that $\mu( \widetilde{E}_n ) \rightarrow 0$ by the dominated convergence theorem. Next, by~\eqref{Thm:uac_char:e_loc}, we find $n_1\in\N$  
    such that for every set $E\subseteq \mathcal{R}$, $\mu(E)< n_1^{-1}$, we have 
    \begin{equation*}
         \sup_{f \in A} \norm{f \chi_{E}}_X<\varepsilon.
    \end{equation*}
    Because $\mu( \widetilde{E}_n ) \rightarrow 0$, we find $n_2\in\N$, $n_2 \geq n_0$, such that for every $n\geq n_2$ we have $\mu( \widetilde{E}_n ) < n_1^{-1}$ and by 
    the previous equation
    \begin{equation}\label{EK_unif}
        \sup_{f \in A} \norm{f \chi_{\widetilde{E}_n}}_X<\varepsilon,
    \end{equation}
    for every $n\geq n_2$. Using~\eqref{E0_unif}, \eqref{EK_unif}, and the subadditivity of suprema, we obtain for every $n\geq n_2$, that
    \begin{equation*}
        \sup_{f \in A} \norm{f \chi_{E_n}}_X \lesssim \sup_{f \in A} \norm{f\chi_{\widetilde{E}_k}}_X+\sup_{f \in A} \norm{f\chi_{\mathcal{R}\setminus E_0}}_X < 2 \varepsilon.
    \end{equation*}
\end{proof}

To get an amalgam-type characterisation of almost-compact embeddings one simply applies the preceding result onto $B_Y$. Let us note that the two conditions together easily imply that $Y \hookrightarrow X$.

\begin{corollary} \label{Cor:Char_ACE}
    Let $\norm{\cdot}_X$ and $\norm{\cdot}_Y$ be a quasi-Banach function norms over the same measure space $(\mathcal{R}, \mu)$ and let $X$ and $Y$ be the corresponding quasi-Banach function spaces. Then $Y \stackrel{*}{\hookrightarrow} X$ if and only if both    
    \begin{equation*}
        \lim_{n\rightarrow\infty} \sigma_X B_Y \left(n^{-1}\right) = 0,
    \end{equation*}
    and
    \begin{equation} \label{Cor:Char_ACE:e_glob}
        \lim_{n\rightarrow\infty} \delta_X B_Y(n) = 0.
    \end{equation} 
\end{corollary}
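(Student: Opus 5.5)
The plan is to reduce the statement to Theorem~\ref{Thm:uac_char} applied with $A = B_Y$, the closed unit ball of $Y$. For this I take the standard definition of an almost-compact embedding: $Y \stackrel{*}{\hookrightarrow} X$ means that for every sequence $E_n \subseteq \mathcal{R}$ with $\chi_{E_n} \to 0$ $\mu$-a.e. we have
\begin{equation*}
    \lim_{n \to \infty} \sup_{\lVert f \rVert_Y \leq 1} \lVert f \chi_{E_n} \rVert_X = 0.
\end{equation*}
Put differently, $Y \stackrel{*}{\hookrightarrow} X$ says exactly that $B_Y$ has uniformly absolutely continuous quasinorm in $X$, provided $B_Y$ is a subset of $X$ at all, which Theorem~\ref{Thm:uac_char} presupposes through $A \subseteq X$.

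\textbf{Necessity.} Assume $Y \stackrel{*}{\hookrightarrow} X$. It is standard, and can be checked by the argument below, that this forces $Y \hookrightarrow X$, so $B_Y \subseteq X$. The set $B_Y$ then has uniformly absolutely continuous quasinorm in $X$, and Theorem~\ref{Thm:uac_char} gives both $\sigma_X B_Y(n^{-1}) \to 0$ and $\delta_X B_Y(n) \to 0$.

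\textbf{Sufficiency.} Assume both limits hold. The only genuine obstacle is that Theorem~\ref{Thm:uac_char} requires $B_Y \subseteq X$, so I first have to derive this, in fact the uniform bound $\sup_{f \in B_Y} \lVert f \rVert_X < \infty$, from the two conditions. This is the remark in the paper that the conditions imply $Y \hookrightarrow X$.

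\begin{enumerate}
    \item Using \eqref{Cor:Char_ACE:e_glob}, choose $E_0$ with $\mu(E_0) < \infty$ such that $\sup_{f \in B_Y} \lVert f \chi_{\mathcal{R} \setminus E_0} \rVert_X \leq 1$.
    \item Choose $m$ such that $\sigma_X B_Y(m^{-1}) \leq 1$.
    \item For $f \in B_Y$, split $E_0$ into the set $G_f = \{ x \in E_0 ; \; \lvert f(x) \rvert > \lambda \}$ and its complement in $E_0$. The constant $\lambda$ is chosen using the embedding $Y \hookrightarrow (\mathcal{M}_0, \mu_{\loc})$ from Theorem~\ref{Thm:EmbToMeasure}, which makes $B_Y$ bounded in measure on $E_0$. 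So there is a $\lambda$, uniform in $f \in B_Y$, with $\mu(G_f) \leq m^{-1}$.
    \item Then $\lVert f \chi_{G_f} \rVert_X \leq 1$ by the choice of $m$. On $E_0 \setminus G_f$ we have $\lvert f \rvert \leq \lambda$, so by the lattice property $\lVert f \chi_{E_0 \setminus G_f} \rVert_X \leq \lambda \lVert \chi_{E_0} \rVert_X$, which is finite by \ref{P4}.
    \item By the quasi-triangle inequality, $\lVert f \rVert_X \leq C'(2 + \lambda \lVert \chi_{E_0} \rVert_X)$ uniformly on $B_Y$, so $B_Y \subseteq X$ and indeed $Y \hookrightarrow X$.
\end{enumerate}

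With $B_Y \subseteq X$ established, Theorem~\ref{Thm:uac_char} applies directly. It shows that $B_Y$ has uniformly absolutely continuous quasinorm in $X$, which is exactly $Y \stackrel{*}{\hookrightarrow} X$.

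The sufficiency direction is where care is needed, specifically the boundedness in measure of $B_Y$ on sets of finite measure. I will justify it by noting that $(Y, \lVert \cdot \rVert_Y) \hookrightarrow (\mathcal{M}_0, \mu_{\loc})$ is continuous and that the unit ball of a quasinormed space is bounded. A continuous linear map sends bounded sets to topologically bounded sets, and in $\mu_{\loc}$ topological boundedness means boundedness in measure on each set of finite measure.
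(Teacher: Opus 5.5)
Your proposal is correct and takes the paper's route: the paper proves the corollary by applying Theorem~\ref{Thm:uac_char} to $A = B_Y$, and only remarks that the two conditions easily give $Y \hookrightarrow X$. Your steps 1--5 (uniform boundedness in measure of $B_Y$ on $E_0$ via Theorem~\ref{Thm:EmbToMeasure}, then splitting $E_0$ at height $\lambda$) supply exactly that omitted detail; in the necessity direction your appeal to ``the argument below'' is not circular, because the necessity half of the proof of Theorem~\ref{Thm:uac_char} never uses $A \subseteq X$.
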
 

The condition \eqref{Cor:Char_ACE:e_glob} is difficult to satisfy. This has been implicitly observed in \cite[Theorem~4.5]{Slavikova12}, where it was shown that there are no almost-compact embeddings among Banach function spaces over non-atomic measure spaces $(\mathcal{R}, \mu)$ with $\mu(\mathcal{R}) = \infty$, with the argument essentially showing the failure of \eqref{Cor:Char_ACE:e_glob}. As we are working in a wider setting, we now present a somewhat more general and more precise version of the result, although the argument is essentially the same.

\begin{proposition} \label{Prop:Global_fail}
    Let $\norm{\cdot}_X$ and $\norm{\cdot}_Y$ be a quasi-Banach function norms over the same measure space $(\mathcal{R}, \mu)$ and let $X$ and $Y$ be the corresponding quasi-Banach function spaces. Assume that there is some $t \in (0, \infty)$ such that the following three conditions hold:
    \begin{enumerate}
        \item \label{Prop:Global_fail_i} For every $E \subseteq \mathcal{R}$ with $\mu(E) < \infty$ there is some set $F_E \subseteq \mathcal{R} \setminus E$ with $\mu(F_E) = t$.
        \item \label{Prop:Global_fail_ii} We have $\varphi_Y^{\max} (t) < \infty$.
        \item \label{Prop:Global_fail_iii} We have $\varphi_X^{\min} (t) > 0$.
    \end{enumerate} 
    Then \eqref{Cor:Char_ACE:e_glob} fails to hold and consequently $Y \not \stackrel{*}{\hookrightarrow} X$.
\end{proposition}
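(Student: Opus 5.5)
The plan is to show directly that $\delta_X B_Y(n)$ is bounded below by a positive constant independent of $n$. We use normalised characteristic functions of the sets $F_E$ supplied by \ref{Prop:Global_fail_i}. The almost-compact embedding statement then follows at once from Corollary~\ref{Cor:Char_ACE}.

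Fix $n \in \mathbb{N}$ and an arbitrary set $E \subseteq \mathcal{R}$ with $\mu(E) \leq n$. By \ref{Prop:Global_fail_i} there is some $F_E \subseteq \mathcal{R} \setminus E$ with $\mu(F_E) = t$. Consider the function
\begin{equation*}
    g_E := \frac{\chi_{F_E}}{\varphi_Y^{\max}(t)}.
\end{equation*}
It is well defined, because \ref{Prop:Global_fail_ii} gives $\varphi_Y^{\max}(t) < \infty$, and $\varphi_Y^{\max}(t) > 0$ by \ref{Q1}. Since $\mu(F_E) = t$, the definition of $\varphi_Y^{\max}$ yields $\lVert g_E \rVert_Y \leq 1$, so $g_E \in B_Y$.

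Because $F_E \cap E = \emptyset$, we have $g_E \chi_{\mathcal{R} \setminus E} = g_E$. Hence
\begin{equation*}
    \sup_{g \in B_Y} \lVert g \chi_{\mathcal{R}\setminus E} \rVert_X \geq \lVert g_E \rVert_X = \frac{\lVert \chi_{F_E} \rVert_X}{\varphi_Y^{\max}(t)} \geq \frac{\varphi_X^{\min}(t)}{\varphi_Y^{\max}(t)}.
\end{equation*}
The final bound uses the definition of $\varphi_X^{\min}$ and again $\mu(F_E) = t$. We now take the infimum over all admissible $E$. This gives $\delta_X B_Y(n) \geq \varphi_X^{\min}(t)/\varphi_Y^{\max}(t)$ for every $n$, and the right-hand side is strictly positive by \ref{Prop:Global_fail_iii}. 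Therefore \eqref{Cor:Char_ACE:e_glob} fails, and Corollary~\ref{Cor:Char_ACE} gives $Y \not\stackrel{*}{\hookrightarrow} X$.

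There is no genuine obstacle here. The only point needing care is that \ref{Prop:Global_fail_i} is stated for every set of finite measure, which is exactly what is needed to beat the infimum in $\delta_X B_Y(n)$ for every $n$. The argument also needs the relevant $t$ to lie in the range of $\mu$ so that $\varphi_X^{\min}(t)$ and $\varphi_Y^{\max}(t)$ are defined, and this is guaranteed by the existence of the sets $F_E$.
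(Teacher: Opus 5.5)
Your proof is correct and is essentially the paper's argument: the normalised characteristic function $\chi_{F_E}/\varphi_Y^{\max}(t)$ lies in $B_Y$, is unaffected by the cut-off $\chi_{\mathcal{R}\setminus E}$, and so gives the uniform lower bound $\varphi_X^{\min}(t)/\varphi_Y^{\max}(t)$. The only cosmetic difference is that the paper picks $\varepsilon$-almost-minimising sets $E_n$ and then lets $\varepsilon \to 0$, whereas you bound the supremum for every admissible $E$ and take the infimum directly, which is slightly cleaner.
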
 

The condition \ref{Prop:Global_fail_i} of course implies that $(\mathcal{R}, \mu)$ is of infinite measure and is automatically satisfied when it is also resonant. The condition \ref{Prop:Global_fail_ii} is automatically satisfied when $(\mathcal{R}, \mu)$ is non-atomic (Theorem~\ref{lenka4.4}) or when $Y$ is r.i. The condition \ref{Prop:Global_fail_iii} is satisfied if $X$ is admissible, e.g.~when both $(\mathcal{R}, \mu)$ is non-atomic and $X$ is a Banach function space or when $X$ is r.i.

\begin{proof}
    Fix $\varepsilon > 0$. We find for every $n \in \mathbb{N}$ some set $E_n$ of measure $\mu(E_n) \leq n$ such that
    \begin{equation*}
        \delta_X B_Y (n) + \varepsilon > \sup_{f \in B_Y} \lVert f \chi_{\mathcal{R} \setminus E_n} \rVert_X.
    \end{equation*}
    Then we use \ref{Prop:Global_fail_i} to find the corresponding set $F_{E_n}$ and observe that \ref{Prop:Global_fail_ii} implies
    \begin{equation*}
        \frac{1}{\varphi_Y^{\max} (t)} \chi_{F_{E_n}}  \in B_Y.
    \end{equation*}
    Thus, we may apply  \ref{Prop:Global_fail_iii} to get
    \begin{equation*}
        \delta_X B_Y (n) + \varepsilon > \sup_{f \in B_Y} \lVert f \chi_{\mathcal{R} \setminus E_n} \rVert_X \geq \frac{1}{\varphi_Y^{\max} (t)} \lVert \chi_{F_{E_n}} \rVert_X \geq \frac{\varphi_X^{\min} (t)}{\varphi_Y^{\max} (t)} > 0.
    \end{equation*}
    As $\varepsilon$ and $n$ were arbitrary, we obtain the desired conclusion.
\end{proof}

It is apparent from the proof that we do not actually need the full estimates from \ref{Prop:Global_fail_ii} and \ref{Prop:Global_fail_iii}. Indeed, the proof would hold even if we had them for the specific sequence $F_{E_n}$ rather than all sets of measure $t$; all we need is the upper estimate on $\lVert \chi_{F_{E_n}} \rVert_Y$ and the lower estimate on $\lVert \chi_{F_{E_n}} \rVert_X$.

Let us now show that, unlike in the case or r.i.~spaces over resonant measure space (see \cite[Theorems~4.3 and 4.5]{Slavikova12}), there are some pairs of even Banach function spaces over a measure space of infinite measure for which we have that the embedding is almost-compact but not compact. We of course recognize that the example is rather artificial.

\begin{example}
    Consider the set $\mathcal{R} = [0, 1] \cup \mathbb{N}$ equipped with the measure $\mu$ that acts like the Lebesgue measure $\lambda$ on $[0,1]$ and like the counting measure $m$ on $\mathbb{N}$. Then the space $X$ given by the norm
    \begin{equation*}
        \lVert f \rVert_X = \esssup_{t \in [0,1]} \lvert f(t) \rvert + \sup_{n \in \mathbb{N}} n \lvert f(n) \rvert
    \end{equation*}
    satisfies $X \stackrel{*}{\hookrightarrow} L^{2}$, but the embedding is not compact.
\end{example}

In the light of the high failure rate of \eqref{Cor:Char_ACE:e_glob} and considering Theorem~\ref{Thm:ConvAlt}, it seems natural to look towards the weaker relation
\begin{equation*}
    \lim_{n \to \infty} \sup_{f \in B_Y} \delta_X f(n) = 0
\end{equation*}
as a possible replacement. This relation, together with the local counterpart
\begin{equation*}
    \lim_{n \to \infty} \sup_{f \in B_Y} \sigma_X f(n^{-1}) = 0,
\end{equation*}
has been thoroughly examined in \cite{MihulaPandy25}, so we will just refer to said paper and repeat that it was a great inspiration for our work. Let us just mention that these relations lead to some rather interesting conclusions and are also rather well motivated, as they were originally introduced in order to provide a characterisation of compact embeddings of Sobolev spaces of radially symmetric functions on the entire Euclidean space in \cite{Mihula26}.

\section{The case of rearrangement-invariant spaces} \label{SectionRI}

Let us now consider quasi-Banach function spaces that are r.i. In this setting, most of our conditions can be equivalently restated in the terms of the non-increasing rearrangement and the representation space in a rather natural way; the one significant exception is the condition $\delta_X A (t) \to 0$ (we shall discuss this in more detail after the proofs).

Throughout this section, we will only work with resonant measure spaces $(\mathcal{R}, \mu)$ and r.i.~quasi-Banach functions spaces $X$. To prevent any confusion, we stress that we always assume a~priori that $f \in \M_0(\mathcal{R},\mu)$, i.e.~it cannot happen that we would work with a function $g \in \mathcal{M}([0, \infty), \lambda)$ for which there would be no equimeasurable function in $\M_0(\mathcal{R},\mu)$. For an analogous reason, we will also prove the next two results only for $t$ in the range of $\mu$, although half of the estimates does not require this assumption.

We shall start with the simpler case, that is representing the maximal local norm of a function.

\begin{proposition}\label{Prop:local_ri_equiv}
    Let $(\mathcal{R}, \mu)$ be resonant, let $\norm{\cdot}_X$ be an r.i.~quasi-Banach function norm, and let $X$ be the corresponding r.i.~quasi-Banach function space. Let $t\geq0$ be in the range of $\mu$. 
    Then, for all $f\in \M_0(\mathcal{R},\mu)$, we have
    \begin{equation*}
        \sigma_X f(t)=\norm{f^*\chi_{[0,t)}}_{\overline{X}}.
    \end{equation*}
\end{proposition}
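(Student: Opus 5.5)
We prove the two inequalities separately, using throughout that $\norm{g}_X = \norm{g^*}_{\overline{X}}$ for every $g \in \M_0(\mathcal{R},\mu)$ together with the lattice property \ref{P2} of $\norm{\cdot}_{\overline{X}}$.

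For the upper bound $\sigma_X f(t) \leq \norm{f^*\chi_{[0,t)}}_{\overline{X}}$, fix $E \subseteq \mathcal{R}$ with $\mu(E) \leq t$. We claim the pointwise estimate
\begin{equation*}
    (f\chi_E)^* \leq f^*\chi_{[0,t)} \quad \text{on } [0,\infty).
\end{equation*}
Indeed, $(f\chi_E)^* \leq f^*$ because $\lvert f\chi_E\rvert \leq \lvert f\rvert$. Moreover, the distribution function of $f\chi_E$ is bounded by $\mu(E) \leq t$, so $(f\chi_E)^*(s) = 0$ for $s \geq t$. Hence
\begin{equation*}
    \norm{f\chi_E}_X = \norm{(f\chi_E)^*}_{\overline{X}} \leq \norm{f^*\chi_{[0,t)}}_{\overline{X}},
\end{equation*}
and taking the supremum over $E$ gives the bound. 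This direction does not need $t$ to lie in the range of $\mu$.

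For the lower bound we need, for $t$ in the range of $\mu$, sets $E$ with $\mu(E) \leq t$ such that $(f\chi_E)^*$ equals, or approximates from below, $f^*\chi_{[0,t)}$. We split into cases according to resonance.

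In the non-atomic case we take $E = \{\lvert f\rvert > f^*(t)\} \cup G$. Here $G \subseteq \{\lvert f\rvert = f^*(t)\}$ is chosen with measure $t - \mu(\{\lvert f\rvert > f^*(t)\})$, which is possible by non-atomicity since $\mu(\{\lvert f\rvert \geq f^*(t)\}) \geq t$. Then $\mu(E) = t$, and a standard verification gives $(f\chi_E)^* = f^*\chi_{[0,t)}$.

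In the completely atomic case with atoms of measure $\alpha$, we have $t = N\alpha$ for some $N$. We let $E$ consist of $N$ atoms on which $\lvert f\rvert$ attains its $N$ largest values.

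\textbf{Main obstacle.} The largest values may fail to be attained, namely when $\lvert f\rvert$ on infinitely many atoms accumulates at its supremum, or, in the non-atomic case, when $f^*(t)$ degenerates (e.g.\ $f^*$ is infinite near zero, or level sets are not large enough). Here exact equality of rearrangements can fail, so we plan an approximation via the Fatou property \ref{P3} of $\norm{\cdot}_{\overline{X}}$. We choose sets $E_j$ with $\mu(E_j) \leq t$ and $(f\chi_{E_j})^* \nearrow f^*\chi_{[0,t)}$ a.e.; in the atomic case, for instance, we pick atoms whose values increase to the successive suprema. Then
\begin{equation*}
    \norm{f\chi_{E_j}}_X = \norm{(f\chi_{E_j})^*}_{\overline{X}} \nearrow \norm{f^*\chi_{[0,t)}}_{\overline{X}},
\end{equation*}
which gives $\sigma_X f(t) \geq \norm{f^*\chi_{[0,t)}}_{\overline{X}}$. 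Finally, the case $t=0$ is trivial, since both sides vanish.
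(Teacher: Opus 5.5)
Your upper bound is the same as the paper's. Your level-set construction in the non-atomic case would also be a valid elementary substitute for the measure-preserving transformation from Bennett--Sharpley that the paper uses, but only under a finiteness hypothesis you did not impose. The approximation meant to cover the remaining cases is asserted rather than constructed, and that is the gap.

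The claim $\mu(\{\lvert f\rvert\geq f^*(t)\})\geq t$ relies on continuity from above of $f_*$ at $f^*(t)$. That requires $f_*(s)<\infty$ for some $s<f^*(t)$, which can fail. For $f(x)=x/(1+x)$ on $((0,\infty),\lambda)$ one has $f^*\equiv 1$ while $\{\lvert f\rvert\geq 1\}=\emptyset$. In fact no $E$ with $\mu(E)\leq t$ satisfies $(f\chi_E)^*=\chi_{[0,t)}$, so here the lower bound is attained only in the limit. Your last paragraph simply postulates sets $E_j$ with $(f\chi_{E_j})^*\nearrow f^*\chi_{[0,t)}$. In the non-atomic case you give no recipe for them. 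In the atomic case, ``values increasing to the successive suprema'' still needs an argument, e.g.\ that once a supremum is not attained, $f^*$ is constant from that point on. The degenerate cases are exactly where your exact constructions break, so these sets are the real content of the lower bound, not a technicality.

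The missing step is the reduction the paper uses.

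First take $f\geq 0$ with $\mu(\supp f)<\infty$. Then $f_*\leq\mu(\supp f)<\infty$, so $f^*(t)<\infty$ for $t>0$ and your level-set construction works verbatim. In the atomic case only finitely many atoms carry $f$, so the $N$ largest values are attained. Either way you get $H$ with $\mu(H)\leq t$ and $(f\chi_H)^*=f^*\chi_{[0,t)}$.

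For general $f$, apply this to $f\chi_{\mathcal{R}_n}$, where $\mathcal{R}_n\nearrow\mathcal{R}$ have finite measure. This gives sets $H_n$ with $\mu(H_n)\leq t$ and $\lVert f\chi_{H_n}\rVert_X\geq\lVert f\chi_{\mathcal{R}_n}\chi_{H_n}\rVert_X=\lVert (f\chi_{\mathcal{R}_n})^*\chi_{[0,t)}\rVert_{\overline{X}}$. Since $(f\chi_{\mathcal{R}_n})^*\nearrow f^*$, the Fatou property of $\lVert\cdot\rVert_{\overline{X}}$ yields $\sigma_X f(t)\geq\lVert f^*\chi_{[0,t)}\rVert_{\overline{X}}$.

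The $H_n$ need not be nested, so you should not insist that $(f\chi_{E_j})^*$ itself increases. It suffices that it dominates an increasing sequence converging to $f^*\chi_{[0,t)}$.
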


As this (and the next) result is similar in spirit to \cite[Theorem~4.8]{MihulaPandy25}, we believe it is worth pointing out the main conceptual difference: In our case, the representation is localised to an individual function $f \in \M_0(\mathcal{R},\mu)$, and we work with point-wise values rather than with limits at $0$ or $\infty$.

\begin{proof}
    Let $f\in \M_0(\mathcal{R},\mu)$. For the first inequality, let $\varepsilon>0$ and by the definition of $\sigma_X f$ find a set $G\subseteq \mathcal{R}$, $\mu(G)=t$ such that 
    \begin{equation*}
        \sigma_X f(t) < \norm{f\chi_G}_X + \varepsilon.
    \end{equation*}
    Then using the properties of the non-increasing rearrangement and the fact that $f\chi_G$ and $(f\chi_G)^*$ are equimeasurable, with the latter function being supported inside $[0, t)$, we obtain
    \begin{equation*}
        \sigma_X f(t) < \norm{f\chi_G}_X + \varepsilon = \norm{(f\chi_G)^* \chi_{[0,t)}}_{\overline{X}} + \varepsilon \leq \norm{f^* \chi_{[0,t)}}_{\overline{X}} + \varepsilon,
    \end{equation*}
    which gives
    \begin{equation*}
        \sigma_X f(t)\leq\norm{f^*\chi_{[0,t)}}_{\overline{X}}.
    \end{equation*}
    
    For the second inequality, we start by only considering $f\in \M_0(\mathcal{R} ,\mu)$ non-negative and with support of finite measure. We denote $E:=\supp f$. Note that the statement is trivial if $\mu(E)\leq t$, so we may assume that $\mu(E)>t$. We now need to distinguish two cases:
    \begin{itemize}[leftmargin=5mm]
        \item If the measure space $(\R,\mu)$ is non-atomic, then by \cite[Chapter~2, Theorem~7.5]{BennettSharpley88} there exists a measure preserving transformation $\sigma\colon E \rightarrow (0,\mu(E))$ such that $f=f^*\circ \sigma$ a.e.~on $E$. Next, we put $H:=\sigma^{-1}((0,t))$. Then $\mu(H)=t$~and $(f^*\chi_{[0,t)}) \circ \sigma = f \chi_H$ a.e.~on $E$. Hence, $f^*\chi_{[0,t)}$ and $f \chi_H$ are equimeasurable.
        \item If $(\R,\mu)$ is completely atomic with atoms of equal measure $\alpha$, then $f^*$ is a right-continuous decreasing step function with jumps occurring only at points $k\alpha$, $k\in\N$. In other words, it is constant on the intervals $[k\alpha, (k+1)\alpha)$, $k \in \mathbb{N}$. For every $k\in\N$ such that $(k+1)\alpha\leq t$ we find a distinct $x_k\in R$ satisfying $f(x_k)=f^*(k\alpha)$; this is possible because we assume that $\mu(E) < \infty$. Let $H\subseteq R$ be the collection of all $x_k$ such that $(k+1)\alpha\leq t$. From this construction, it is clear that the functions $f^*\chi_{[0,t)}$ and $f\chi_H$ are equimeasurable.
    \end{itemize}
    In both cases, this yields
    \begin{equation*}
        \norm{f^*\chi_{[0,t)}}_{\overline{X}} = \norm{f \chi_H}_X \leq \sigma_X f(t).
    \end{equation*}
    For a general non-negative $f\in\M_0(\mathcal{R},\mu)$, we let $f_n:=f\chi_{\mathcal{R}_n}$ for every $n\in\N$, where $\mathcal{R}_n\nearrow \mathcal{R}$ is the sequence from the $\sigma$-finiteness of $(\mathcal{R},\mu)$. The previously proven case gives
    \begin{equation*}
        \sigma_X f_n(t)=\norm{f^*_n\chi_{[0,t)}}_{\overline{X}},
    \end{equation*}
    for every $n\in\N$. We may now take the limit on both sides. Indeed, on the right-hand side, the convergence follows by using the properties of the non-increasing rearrangement and the property \ref{P3} of $\norm{\cdot}_{\overline{X}}$, while on the left-hand side one only has to interchange suprema and apply the property \ref{P3} of $\norm{\cdot}_{X}$. Finally, the extension from non-negative functions to general ones is trivial.
\end{proof}

The answer is not so simple for the global part, where we were only able to obtain the following inequalities with a substantially more involved proof. Note also that we assume that $\mu(\mathcal{R}) = \infty$; this is a technical assumption that makes the proof less complicated and that is not restrictive for our purposes as we are only interested in the convergence to zero as $t \to \infty$ (which is trivially satisfied for both functionals if $\mu(\mathcal{R}) < \infty$).

\begin{proposition}\label{global_norm_rearrangement}
   Let $(\mathcal{R}, \mu)$ be resonant, let $\norm{\cdot}_X$ be an r.i.~quasi-Banach function norm, and let $X$ be the corresponding r.i.~quasi-Banach function space. Assume that $\mu(\mathcal{R}) = \infty$. Let $t\geq0$ be in the range of $\mu$. Then it holds for all $f\in \M_0(\mathcal{R},\mu)$ that
    \begin{equation*}
        \delta_Xf(t) \lesssim \norm{f^*\chi_{[t,\infty)}}_{\overline{X}} \lesssim \delta_Xf\left(\frac{t}{2}\right),
    \end{equation*}
    where the hidden constants depend only on the space $X$, not on $f$ or $t$.
\end{proposition}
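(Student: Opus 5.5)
Both inequalities will be proved by reducing to non-negative functions and by measure-preserving transfer between $f$ and $f^*$, in the spirit of the proof of Proposition~\ref{Prop:local_ri_equiv}.

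\emph{First inequality.} I would first treat non-negative $f$ whose support has finite measure. The resonance of $(\mathcal{R},\mu)$ then yields a set $H$ with $\mu(H)=t$ on which $f$ takes its ``largest values''. In the non-atomic case, take $H=\sigma^{-1}((0,t))$, where $\sigma$ is the measure-preserving transformation from \cite[Chapter~2, Theorem~7.5]{BennettSharpley88}. In the atomic case, choose the atoms carrying the values $f^*(k\alpha)$. With this choice, $f\chi_{\mathcal{R}\setminus H}$ is equimeasurable with $f^*\chi_{[t,\infty)}$, which gives
\begin{equation*}
\delta_X f(t)\le \norm{f\chi_{\mathcal{R}\setminus H}}_X=\norm{f^*\chi_{[t,\infty)}}_{\overline X}.
\end{equation*}
For general $f$ I expect the monotone truncation $f\chi_{\mathcal{R}_n}$ of Proposition~\ref{Prop:local_ri_equiv} to be awkward, because $\delta_X$ is an infimum. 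I would instead truncate the level, splitting $f=f\chi_{\{|f|>f^*(t)\}}+f\chi_{\{|f|\le f^*(t)\}}$. The set $\{|f|>f^*(t)\}$ has measure at most $t$. If $f^*$ is not constant near $t$, one adds to it part of the level set $\{|f|=f^*(t)\}$ to obtain $H$ with $\mu(H)\le t$ and the rearrangement of $f\chi_{\mathcal{R}\setminus H}$ dominated by $f^*(\cdot+t)$. The remaining case is when $f^*$ is constant on a neighbourhood, for instance when $\mu(\{|f|=f^*(t)\})=\infty$, since then we cannot pick $H$ of measure exactly $t$ inside the level set with the complementary part of full infinite measure. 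That case still works, because the level set has infinite measure: we just remove a part of measure $t-\mu(\{|f|>f^*(t)\})$. Lastly, $(f\chi_{\mathcal{R}\setminus H})^*\le f^*(\cdot+t)$, which has the same distribution as $f^*\chi_{[t,\infty)}$ up to a set of measure $t$ with value at most $f^*(t)$. I need to check this yields the bound without constants; a lattice comparison via \ref{P2} should suffice, and otherwise only a constant is lost. This is where $\mu(\mathcal{R})=\infty$ is used.

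\emph{Second inequality.} This is the main obstacle. Given $\varepsilon>0$, pick $E$ with $\mu(E)\le t/2$ and $\norm{f\chi_{\mathcal{R}\setminus E}}_X<\delta_Xf(t/2)+\varepsilon$, and set $g=f\chi_{\mathcal{R}\setminus E}$. The task is to bound $\norm{f^*\chi_{[t,\infty)}}_{\overline X}$ by a multiple of $\norm{g^*}_{\overline X}$. Since $f=g+f\chi_E$ and $\mu(\supp f\chi_E)\le t/2$, the standard estimate $f^*(s_1+s_2)\le g^*(s_1)+(f\chi_E)^*(s_2)$ with $s_2=t/2$ gives $f^*(s)\le g^*(s-t/2)$ for $s\ge t/2$ (the second term vanishes there). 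Hence
\begin{equation*}
f^*\chi_{[t,\infty)}(s)\le g^*(s-t/2)\chi_{[t,\infty)}(s).
\end{equation*}
The right-hand side is a translate of $g^*\chi_{[t/2,\infty)}$ and is equimeasurable with it, since the shifted function lives on $[t,\infty)$ and has the same distribution. So its $\overline{X}$-norm is at most $\norm{g^*}_{\overline X}=\norm{g}_X$ by \ref{P6} and \ref{P2}. This step seems to need no quasi-triangle constant at all; the constants in the statement presumably come from handling $f$ not in $\mathcal{M}_+$, or from the first inequality. I would double-check the rearrangement inequality in the atomic case, where $t/2$ may not lie in the range of $\mu$. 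If it does not, the shift argument still works at the level of $[0,\infty)$, since it only uses properties of $\overline X$.
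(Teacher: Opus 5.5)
Your proof of the second inequality is correct and takes a different, cleaner route than the paper. The paper bounds $f^*(s)\le (f\chi_{\mathcal{R}\setminus H})^*(s/2)$ for $s\ge t$ and then invokes the boundedness of the dilation operator $D_{1/2}$ on $\overline{X}$, which costs $\norm{D_{1/2}}_{\overline X\to\overline X}$. Your shift $f^*(s)\le g^*(s-t/2)$ needs only that a translate on $[0,\infty)$ is equimeasurable with the original, so you get constant $1$ and use no dilation theorem. Running the same argument with $\mu(E)\le t$ and $s_2=t$ even gives $\norm{f^*\chi_{[t,\infty)}}_{\overline X}\le\delta_Xf(t)$.

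The first inequality, however, has a gap, and it sits exactly where the paper needs its separate case $\alpha:=\lim_{s\to\infty}f^*(s)>0$. Your construction removes a piece of measure $t-\mu(\{\abs{f}>f^*(t)\})$ from the level set $\{\abs{f}=f^*(t)\}$. This requires $\mu(\{\abs{f}\ge f^*(t)\})\ge t$, which can fail. For $f(x)=x/(1+x)$ on $((0,\infty),\lambda)$ and $t=1$ we have $f^*\equiv 1$, but both $\{\abs{f}>1\}$ and $\{\abs{f}=1\}$ are empty, so there is nothing to remove. The infinite-measure level set that you single out as the ``remaining case'' is harmless. The real difficulty is $f^*\equiv\alpha>0$ on $[t,\infty)$ while $\mu(\{\abs{f}\ge\alpha\})<t$. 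The paper handles this case with $f_0=\max\{f,\alpha\}$, the quasi-triangle inequality, and the finiteness of $\norm{\chi_{[0,\infty)}}_{\overline X}$ from \cite[Theorem~4.16]{MusilovaNekvinda25}.

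Your approach can be repaired more cheaply. If $\mu(\{\abs{f}\ge f^*(t)\})<t$, then $f_*(s)=\infty$ for every $s<f^*(t)$; otherwise continuity from above would give $\mu(\{\abs{f}\ge f^*(t)\})=\lim_{s\to f^*(t)-}f_*(s)\ge t$. Hence $f^*\equiv f^*(t)$ on $[t,\infty)$. Take $H=\{\abs{f}\ge f^*(t)\}$: then $\mu(H)<t$, and $\abs{f}<f^*(t)=f^*(s+t)$ on $\mathcal{R}\setminus H$ for every $s\ge0$. So $(f\chi_{\mathcal{R}\setminus H})^*\le f^*(\cdot+t)$ still holds.

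Your worry about losing a constant is also unnecessary. The function $f^*(\cdot+t)$ is exactly equimeasurable with $f^*\chi_{[t,\infty)}$, by the same translation fact you use in the second part. The hypothesis $\mu(\mathcal{R})=\infty$ is not what makes this step work. Repaired this way, your route gives $\delta_Xf(t)=\norm{f^*\chi_{[t,\infty)}}_{\overline X}$, which is sharper than the stated two-sided estimate.
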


\begin{proof}
    We start with the second inequality. Let $\varepsilon>0$ and $f\in\M_0(\mathcal{R},\mu)$. By the definition of $\delta_X$, we find a set $H\subseteq \mathcal{R}$, $\mu(H) \leq t/2$, such that
    \begin{equation*}
        \norm{f\chi_{\mathcal{R}\setminus H}}_X < \delta_Xf\left(\frac{t}{2}\right) + \varepsilon.
    \end{equation*}
    From the properties of the non-increasing rearrangement, we get the following pointwise inequality
    \begin{equation*}
        g^*(s)-h^*\left(\frac{s}{2}\right)\leq (g-h)^*\left(\frac{s}{2}\right),
    \end{equation*}
    for all $g,h\in\M_0(\mathcal{R},\mu)$ and all $s \in (0,\infty)$. 
    We notice that $(f\chi_H)^*\left(s/2\right)=0$ for all $s\in [t,\infty)$,
    so for all $s\in [t,\infty)$, we get
    \begin{equation*}
        f^*(s)=f^*(s)-(f\chi_H)^*\left(\frac{s}{2}\right)\leq(f\chi_{\mathcal{R}\setminus H})^*\left(\frac{s}{2}\right).
    \end{equation*}
    Using this inequality, the property \ref{P2} of $\norm{\cdot}_{\overline{X}}$, and the boundedness of the dilation operator $D_{\frac{1}{2}} \colon \overline{X} \to \overline{X}$ (\cite[Theorem~3.23]{NekvindaPesa24}), we calculate
    \begin{equation*}
        \begin{aligned}
            \norm{f^*\chi_{[t,\infty)}}_{\overline{X}}
                        &\leq \norm{ D_{\frac{1}{2}} \left(\left(f\chi_{\mathcal{R}\setminus H}\right)^* \right)\chi_{[t,\infty)}}_{\overline{X}} \\
                        &\leq \norm{D_{\frac{1}{2}}}_{\overline{X}\rightarrow \overline{X}} \norm{\left(f\chi_{\mathcal{R}\setminus H}\right)^*}_{\overline{X}} \\
                        &=\norm{D_{\frac{1}{2}}}_{\overline{X}\rightarrow \overline{X}} \norm{f\chi_{\mathcal{R}\setminus H}}_{X} \\
                        &< \norm{D_{\frac{1}{2}}}_{\overline{X}\rightarrow \overline{X}} \left( \delta_Xf\left(\frac{t}{2}\right) + \varepsilon\right).
        \end{aligned}
    \end{equation*}
    
    We now move to the first inequality. Without loss of generality, we may consider only functions $f\in \M_+(\mathcal{R},\mu)\cap \M_0(\mathcal{R},\mu)$ and we denote
    \begin{equation*}
        \alpha:=\lim_{t\to\infty} f^*(t).
    \end{equation*}
    If $\mu(\supp f)\leq t$, the statement is clear because both terms evaluate to 0. Hence, we may only consider $\mu(\supp f)>t$. 
    
    First, suppose that $\alpha=0$. We need to distinguish two cases:
    \begin{itemize}[leftmargin=5mm]
        \item If the measure space $(\R,\mu)$ is non-atomic, then by \cite[Chapter~2, Corollary~7.6]{BennettSharpley88}, there exists a measure preserving transformation $\sigma\colon \supp f\rightarrow \supp f^*$ such that $f=f^*\circ \sigma$ a.e.~on $\supp f$. We put $G:=\sigma^{-1}((0,t))$. Then $\mu(G)=t$ and $f\chi_{\mathcal{R}\setminus G}=f^*\chi_{[t,\infty)}\circ \sigma$ a.e.~on $\supp f$. Thence, $f^*\chi_{[t,\infty)}$ and $f\chi_{\mathcal{R}\setminus G}$ are equimeasurable.
        \item If $(R,\mu)$ is completely atomic with atoms of equal measure $\beta$, then $f^*$ is a right-continuous decreasing step function with jumps occurring only at points $k\beta$, $k\in\N$. In other words, it is constant on the intervals $[k\beta, (k+1)\beta)$, $k \in \mathbb{N}$. Since $\alpha=0$, it is easy to construct an ordering of atoms $\sigma\colon \N \to \R$ such that $f(\sigma(k))=f^*(k\beta)$, $k\in\N$. Let $G=\sigma(\{0,\ldots,t/\beta-1\})$. Then $\mu(G)=t$ and it is clear from the construction that $f^*\chi_{[t,\infty)}$ and $f\chi_{\mathcal{R}\setminus G}$ are equimeasurable.
    \end{itemize}
    In both cases, it follows that
    \begin{equation*}
        \delta_Xf(t) \leq \norm{f\chi_{\mathcal{R}\setminus G}}_X=\norm{f^*\chi_{[t,\infty)}}_{\overline{X}}.
    \end{equation*}
    
    Next, suppose that $\alpha>0$ and that $\norm{f^*\chi_{[t,\infty)}}_{\overline{X}}<\infty$, otherwise the inequality is trivial. Since it holds that $\alpha=\lim_{s\to\infty }f^*(s)\chi_{[t,\infty)}(s)$, all the statements in  \cite[Theorem~4.16]{MusilovaNekvinda25} are true. In particular, there exists a finite constant $C$ such that
    \begin{equation*}
        \norm{\chi_{[0,\infty)}}_{\overline{X}} = \norm{\chi_{[t,\infty)}}_{\overline{X}} = \norm{\chi_{\mathcal{R} \setminus E}}_X = \norm{\chi_\mathcal{R}}_X = C < \infty,
    \end{equation*}
    for every $t>0$ and every $E \subseteq \mathcal{R}$, $\mu(E) < \infty$ (the first three equalities are due to equimeasurability of the functions in question, the finiteness of $C$ comes from the cited result). Let now $f_0:=\max\{f,\alpha\}$. Then $f^*=f_0^*$ and $f\leq f_0$. Using these facts, the properties \ref{Q1} and \ref{P2} of $\norm{\cdot}_X$ and $\norm{\cdot}_{\overline{X}}$, the calculation in the previous case for the function $f_0-\alpha$, and the properties of the non-increasing rearrangement, we obtain
    \begin{equation*}
        \begin{aligned}
            \delta_Xf(t)&\leq\inf_{\substack{E\subseteq \mathcal{R} \\\mu(E) \leq t}} \norm{f_0 \chi_{\mathcal{R}\setminus E}}_X \\
            &\lesssim\inf_{\substack{E\subseteq \mathcal{R} \\\mu(E) \leq t}} \left( \norm{\left(f_0-\alpha\right) \chi_{\mathcal{R}\setminus E}}_X + \norm{\alpha \chi_{\mathcal{R}\setminus E}}_X\right) \\
            &= \delta_X \left(f_0-\alpha\right)(t) + \alpha C \\
            &\leq \norm{\left(f_0-\alpha\right)^*\chi_{[t,\infty)}}_{\overline{X}} + \alpha \norm{\chi_{[t,\infty)}}_{\overline{X}} \\
            &\leq \norm{f^*\chi_{[t,\infty)}}_{\overline{X}} + \norm{f^*\chi_{[t,\infty)}}_{\overline{X}} \\
            &\lesssim \norm{f^*\chi_{[t,\infty)}}_{\overline{X}}.
        \end{aligned}
    \end{equation*}
\end{proof}

A rather immediate consequence is the following representation of our conditions.

\begin{corollary} \label{Cor:RepreDecays}
    Let $(\mathcal{R}, \mu)$ be resonant, let $\norm{\cdot}_X$ be an r.i.~quasi-Banach function norm, and let $X$ be the corresponding r.i.~quasi-Banach function space. Then, for all $f\in \M_0(\mathcal{R},\mu)$, we have
    \begin{align}
        \lim_{n \to \infty} \sigma_X f(n^{-1}) = 0 &\iff \lim_{n \to \infty} \norm{f^*\chi_{[0,n^{-1})}}_{\overline{X}} = 0, \label{Cor:RepreDecays_i}\\
        \lim_{n \to \infty} \delta_X f(n) = 0 &\iff \lim_{n \to \infty} \norm{f^*\chi_{[n,\infty)}}_{\overline{X}} = 0. \label{Cor:RepreDecays_ii}
    \end{align}
    Furthermore, for every $A \subseteq X$ we have
    \begin{align}
        \lim_{n \to \infty} \sup_{f \in A} \delta_X f(n) = 0 &\iff \lim_{n \to \infty} \sup_{f \in A} \norm{f^*\chi_{[n,\infty)}}_{\overline{X}} = 0 \label{Cor:RepreDecays_iii}
    \end{align}
    and when $(\mathcal{R}, \mu)$ is non-atomic then also
    \begin{equation}
        \lim_{n \to \infty} \sigma_X A(n^{-1}) = \lim_{n \to \infty} \sup_{f \in A} \sigma_X f(n^{-1}) = 0 \iff \lim_{n \to \infty} \sup_{f \in A} \norm{f^*\chi_{[0,n^{-1})}}_{\overline{X}} = 0. \label{Cor:RepreDecays_iv}
    \end{equation}
\end{corollary}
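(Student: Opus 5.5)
The plan is to read off all four equivalences from Propositions~\ref{Prop:local_ri_equiv} and \ref{global_norm_rearrangement}, together with the monotonicity of the functionals involved. Before doing so, we dispose of the case $\mu(\mathcal{R})<\infty$, where Proposition~\ref{global_norm_rearrangement} is not available. In that case $\delta_X f(n)=0$ for $n\ge\mu(\mathcal{R})$ (Remark~\ref{Rem:SpecialMu}), and likewise $f^*\chi_{[n,\infty)}=0$ for such $n$, since $f^*$ vanishes on $[\mu(\mathcal{R}),\infty)$. Hence both sides of \eqref{Cor:RepreDecays_ii} and \eqref{Cor:RepreDecays_iii} hold trivially, and from now on we assume $\mu(\mathcal{R})=\infty$ whenever the global statements are concerned.

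For \eqref{Cor:RepreDecays_i}, the non-atomic case is immediate from Proposition~\ref{Prop:local_ri_equiv} with $t=n^{-1}$. In the atomic case with atoms of measure $\beta$, the numbers $n^{-1}$ need not lie in the range of $\mu$, so Proposition~\ref{Prop:local_ri_equiv} does not apply directly. However, for $n^{-1}<\beta$ both sides vanish: $\sigma_Xf(n^{-1})=0$ by Remark~\ref{Rem:SpecialMu}, and $\norm{f^*\chi_{[0,n^{-1})}}_{\overline{X}}$ is controlled by $\norm{f\chi_{\emptyset}}_X=0$ through the first half of the proof of Proposition~\ref{Prop:local_ri_equiv}. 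I expect this to require some care, because that first half only gives the direction $\sigma_X\le\lVert\cdot\rVert_{\overline{X}}$. I would instead argue directly in $\overline{X}$, using that it is a function norm on $[0,\infty)$, together with the paper's specific choice of representation $\overline{X}$ from \cite{Pesa25}. If that fails, I would restrict the statement to $n$ with $n^{-1}$ in the range of $\mu$, which suffices because both sides are monotone in $n$.

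For \eqref{Cor:RepreDecays_ii}, both $\delta_Xf(t)$ and $\norm{f^*\chi_{[t,\infty)}}_{\overline{X}}$ are non-increasing in $t$, so it is enough to take $t$ along a subsequence in the range of $\mu$. In the non-atomic case every $t$ lies in the range; in the atomic case we take $t$ to be multiples of $2\beta$, so that both $t$ and $t/2$ lie in the range. Proposition~\ref{global_norm_rearrangement} then gives
\[
\delta_Xf(t)\lesssim\norm{f^*\chi_{[t,\infty)}}_{\overline{X}}\lesssim\delta_Xf(t/2).
\]
If $\delta_Xf\to0$, the right-hand inequality forces $\norm{f^*\chi_{[t,\infty)}}_{\overline{X}}\to0$; conversely, the left-hand inequality gives $\delta_Xf\to0$. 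Monotonicity then transfers the limit along the subsequence to the limit along $n$.

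For \eqref{Cor:RepreDecays_iii}, we use the same two inequalities. Their constants are independent of $f$ and $t$, so we may take the supremum over $f\in A$ on each side and argue exactly as in the previous paragraph.

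For \eqref{Cor:RepreDecays_iv}, the first equality, $\sigma_XA(n^{-1})=\sup_{f\in A}\sigma_Xf(n^{-1})$, is the interchange of suprema recorded in the remark following Definition~\ref{Def:uniform_maximal_minimal_norm}. The equivalence with the rearrangement condition follows by applying Proposition~\ref{Prop:local_ri_equiv} to each $f\in A$ with $t=n^{-1}$, which lies in the range of $\mu$ because the space is non-atomic, and then taking suprema over $f\in A$.

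The only genuine obstacle is the range-of-$\mu$ issue in the atomic local case, which I plan to settle by the vanishing and monotonicity argument described above.
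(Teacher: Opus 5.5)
Your treatment of \eqref{Cor:RepreDecays_ii}, \eqref{Cor:RepreDecays_iii}, \eqref{Cor:RepreDecays_iv}, and of the non-atomic case of \eqref{Cor:RepreDecays_i} is correct and essentially follows the paper. It uses the trivial finite-measure case, Proposition~\ref{global_norm_rearrangement} along values of $t$ in the range of $\mu$ with $f$-independent constants together with monotonicity, and the interchange of suprema. The gap is the completely atomic case of \eqref{Cor:RepreDecays_i}. You leave it open, and neither of your two routes closes it.

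The fallback cannot work. If the atoms have measure $\beta$, the range of $\mu$ is contained in $\beta\mathbb{N}\cup\{\infty\}$, so only finitely many of the numbers $n^{-1}$ lie in it. There is therefore no sequence $n^{-1}\to 0$ along which Proposition~\ref{Prop:local_ri_equiv} applies, and monotonicity has nothing to transfer.

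Your first route rightly senses that the particular choice of $\overline{X}$ matters, but it aims at a false claim. Since $f^*$ is constant on $[0,\beta)$, for $n^{-1}<\beta$ we have $\lVert f^*\chi_{[0,n^{-1})}\rVert_{\overline{X}}=f^*(0)\lVert\chi_{[0,n^{-1})}\rVert_{\overline{X}}$. This is strictly positive for every $f\neq 0$ by axiom (Q1)(b) for $\lVert\cdot\rVert_{\overline{X}}$. So the right-hand side does not vanish; you have to show it tends to zero.

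What is needed is $\lVert\chi_{[0,s)}\rVert_{\overline{X}}\to 0$ as $s\to 0^+$, and this does not follow from the axioms of a function norm on $[0,\infty)$. For $X=\ell^\infty(\mathbb{N})$, the space $L^\infty((0,\infty))$ is a legitimate representation, since $\lVert f\rVert_{\ell^\infty}=f^*(0)=\lVert f^*\rVert_{L^\infty}$. Yet $\lVert f^*\chi_{[0,n^{-1})}\rVert_{L^\infty}=f^*(0)$ for every $n$, so \eqref{Cor:RepreDecays_i} fails with that choice.

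The missing ingredient is the concrete construction of the canonical space in \cite[Definition~3.1]{Pesa25}, which behaves like $L^1$ on $[0,\beta)$. Compare the remark after the corollary, where the right-hand side becomes $\int_0^{n^{-1}} f^*\,d\lambda$. This construction is exactly what the paper invokes for this case, although its wording that ``all the terms'' in \eqref{Cor:RepreDecays_i} are zero is itself too strong for the right-hand side.

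Finally, this step also needs $f^*(0)<\infty$. That is automatic for $f\in X$, because $\lvert f\rvert\leq\lVert f\rVert_X/\varphi_X(\beta)$. But for an unbounded $f\in\mathcal{M}_0$ on an atomic space one has $f^*\equiv\infty$. Then the right-hand side of \eqref{Cor:RepreDecays_i} is infinite for every $n$ while the left-hand side is eventually zero. So in the atomic case the equivalence should be read for bounded $f$.
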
 

\begin{proof}
    If all the values $n^{-1}$ are in the range of $\mu$, then \eqref{Cor:RepreDecays_i} and \eqref{Cor:RepreDecays_iv} are immediate. If this is not true, then $(\mathcal{R}, \mu)$ is completely atomic and thus all the terms in \eqref{Cor:RepreDecays_i} are zero for every $f \in \mathcal{M}(\mathcal{R}, \mu)$. This is trivial for the left-hand side; for the right-hand side it follows from the construction of the canonical representation space (see \cite[Definition~3.1]{Pesa25}). 
    
    If all the values $n$ are in the range of $\mu$, then \eqref{Cor:RepreDecays_ii} is immediate, while \eqref{Cor:RepreDecays_iii} follows because the constants in Proposition~\ref{global_norm_rearrangement} are independent of $f$. As for the remaining cases:
    \begin{itemize}[leftmargin=5mm]
        \item If $\mu(R) < \infty$ then all the terms in \eqref{Cor:RepreDecays_ii} and \eqref{Cor:RepreDecays_iii} are clearly zero (just take $n \geq \mu(\mathcal{R})$).
        \item If $(\mathcal{R}, \mu)$ is completely atomic with $\mu(\mathcal{R}) = \infty$ then we may find for every $n \in \mathbb{N}$ some $t \geq n$ in the range of $\mu$ and some $m \in \mathbb{N}$, $m \geq t$. This is sufficient thanks to the monotonicity of $s \mapsto \delta_X f(s)$ and $s \mapsto \lVert f^* \chi_{[s, \infty)} \rVert_X$.
    \end{itemize}
\end{proof}

The restriction to non-atomic measure in the last equivalence is necessary because if the measure is completely atomic, then the left-hand side is zero once $n^{-1}$ is smaller than the measure of a single atom regardless of the set $A$, while on the right-hand side we still have some non-trivial condition, specifically
\begin{equation*}
    \lim_{n \to \infty} \sup_{f \in A} \int_0^{n^{-1}} f^* \: d\lambda  = 0.
\end{equation*}
(Recall that we always consider $\lVert \cdot \rVert_{\overline{X}}$ to be defined as in \cite[Definition~3.1]{Pesa25}).

\begin{remark}
    An important part of Corollary~\ref{Cor:RepreDecays} is that there is no representation for
    \begin{equation} \label{Eq:NonRepresented}
        \lim_{n \to \infty} \delta_X A(n) = 0.
    \end{equation}
     That this condition cannot be inferred from the set $\{f^* ; \; f \in A\}$ can be observed quite easily by considering a set of all translations of a single function defined on $(\mathbb{R}^n, \lambda^n)$. This set clearly violates \eqref{Eq:NonRepresented}, yet the non-increasing rearrangements of all the functions are equal to each other, so $\{f^* ; \; f \in A\}$ is a singleton.
\end{remark}

We may now combine Corollary~\ref{Cor:RepreDecays} with the previously presented results to obtain formulations working with the canonical representation space. We note that the above discussed lack or representation for \eqref{Eq:NonRepresented} means that the results working with this condition cannot be formulated only in the terms of the canonical representation space. We shall not list all the results, selecting only the most important ones.

\begin{remark}
    The first result would be a represented version of Theorem~\ref{Thm:ac_char}, i.e.~that $f \in X_a$ if and only if both
    \begin{align*}
        \lim_{n \to \infty} \norm{f^*\chi_{[0,n^{-1})}}_{\overline{X}} = 0, \\
        \lim_{n \to \infty} \norm{f^*\chi_{[n,\infty)}}_{\overline{X}} = 0,
    \end{align*}    
    where $X$ is assumed to be an r.i.~quasi-Banach function space, but this statement has already been proved directly in \cite[Proposition~4.5]{Pesa25}. It is also worth mentioning that in \cite[Corollary~3.7]{KiwerskiKolwicz22} the authors have shown that when $X$ is an r.i.~Banach function space over $((0, \infty), \lambda)$ and $X_a \neq \{ 0 \}$, then one has
    \begin{equation*}
        \dist (f, X_a) = \lim_{n \to \infty} \lVert f^* \chi_{\Omega_n} \rVert_X,
    \end{equation*} 
    where
    \begin{equation*}
        \Omega_n = (0, n^{-1}) \cup (n, \infty).
    \end{equation*}
    This can be seen as a quantitative version of the characterisation, which notably does not follow from the approach presented in this paper. The authors of  \cite{KiwerskiKolwicz22} also cover the cases of a finite interval and of natural numbers with counting measure but their approach is somewhat different to ours;  for simplicity, we present the case that is both the most illustrative and most compatible with our approach.
\end{remark}

The first result we actually present is the characterisation of compactness using $\mu_{\loc}$ (based on Theorem~\ref{Thm:CharComp}).

\begin{theorem} \label{Thm:CharCompRI}
    Let $(\mathcal{R}, \mu)$ be resonant, let $\norm{\cdot}_X$ be an r.i.~quasi-Banach function norm, and let $X$ be the corresponding r.i.~quasi-Banach function space. Let $A \subseteq X_a$. Then the following statements are equivalent:
    \begin{enumerate}
        \item \label{Thm:CharCompRI_i} $A$ is precompact in $(X, \lVert \cdot \rVert_X)$.
        \item \label{Thm:CharCompRI_ii} The following three conditions are all satisfied:
        \begin{enumerate}
            \item \label{Thm:CharCompRI_iia} $A$ is precompact in $(\mathcal{M}_0, \mu_{\loc} )$,
            \item \label{Thm:CharCompRI_iib} $\lim_{n \to \infty} \sup_{f \in A} \lVert f^* \chi_{[0, n^{-1})}  \rVert_{\overline{X}} = 0$,
            \item \label{Thm:CharCompRI_iic} $\lim_{n \to \infty} \delta_X A(n) = 0$.
        \end{enumerate}
    \end{enumerate} 
\end{theorem}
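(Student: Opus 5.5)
The plan is to derive Theorem~\ref{Thm:CharCompRI} directly from Theorem~\ref{Thm:CharComp}. Conditions \ref{Thm:CharCompRI_ii}\ref{Thm:CharCompRI_iia} and \ref{Thm:CharCompRI_ii}\ref{Thm:CharCompRI_iic} coincide verbatim with \ref{Thm:CharComp_ii}\ref{Thm:CharComp_iia} and \ref{Thm:CharComp_ii}\ref{Thm:CharComp_iic} there. Hence everything reduces to comparing the local condition $\lim_{n\to\infty}\sigma_X A(n^{-1})=0$ with its represented counterpart \ref{Thm:CharCompRI_ii}\ref{Thm:CharCompRI_iib}. Since $(\mathcal{R},\mu)$ is resonant, I split into the non-atomic case and the completely atomic case with atoms of equal measure $\beta$.

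In the non-atomic case, \eqref{Cor:RepreDecays_iv} of Corollary~\ref{Cor:RepreDecays} gives exactly that $\lim_{n}\sigma_X A(n^{-1})=0$ is equivalent to \ref{Thm:CharCompRI_ii}\ref{Thm:CharCompRI_iib}. The equivalence of \ref{Thm:CharCompRI_i} and \ref{Thm:CharCompRI_ii} is then literally Theorem~\ref{Thm:CharComp} \ref{Thm:CharComp_i}$\iff$\ref{Thm:CharComp_ii}. This uses Proposition~\ref{Prop:local_ri_equiv} through the corollary, together with the interchange of the suprema over $E$ and over $f\in A$.

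In the completely atomic case, Remark~\ref{Rem:SpecialMu} shows that $\sigma_X A(n^{-1})=0$ as soon as $n^{-1}<\beta$. So condition \ref{Thm:CharComp_ii}\ref{Thm:CharComp_iib} of Theorem~\ref{Thm:CharComp} holds automatically, and \ref{Thm:CharCompRI_ii}$\Rightarrow$\ref{Thm:CharCompRI_i} follows at once from Theorem~\ref{Thm:CharComp}: we simply discard \ref{Thm:CharCompRI_ii}\ref{Thm:CharCompRI_iib}. For \ref{Thm:CharCompRI_i}$\Rightarrow$\ref{Thm:CharCompRI_ii}, Theorem~\ref{Thm:CharComp} gives \ref{Thm:CharCompRI_iia} and \ref{Thm:CharCompRI_iic}, so it remains to verify \ref{Thm:CharCompRI_ii}\ref{Thm:CharCompRI_iib}, i.e.\ (as discussed after Corollary~\ref{Cor:RepreDecays}) that $\sup_{f\in A}\int_0^{n^{-1}} f^*\,d\lambda\to 0$, up to a constant depending only on the canonical representation space. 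I plan three steps.
\begin{enumerate}
\item A precompact set is bounded in $(X,\lVert\cdot\rVert_X)$, so $\sup_{f\in A}\lVert f\rVert_X=:M<\infty$.
\item For each atom $x$ we have $\lvert f(x)\rvert\,\varphi_X(\beta)=\lVert f\chi_{\{x\}}\rVert_X\le\lVert f\rVert_X$ by \ref{P2}. Here $\varphi_X(\beta)>0$ by \ref{Q1} and is the same for all atoms by \ref{P6}. Hence $\sup_{f\in A}\lVert f\rVert_{L^\infty}\le M/\varphi_X(\beta)$.
\item For $n^{-1}<\beta$, the function $f^*$ is constant, equal to $\lVert f\rVert_{L^\infty}$, on $[0,n^{-1})$. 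Therefore $\sup_{f\in A}\int_0^{n^{-1}}f^*\,d\lambda\le n^{-1}M/\varphi_X(\beta)\to0$.
\end{enumerate}

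The main obstacle is the last step. It needs the precise form of $\lVert\cdot\rVert_{\overline{X}}$ on functions supported in $[0,\beta)$ from \cite[Definition~3.1]{Pesa25}, namely that there it is controlled by (a constant multiple of) the $L^1$ norm, as indicated after Corollary~\ref{Cor:RepreDecays}. I would verify this from the cited construction, and only rely on the upper bound $\lVert c\chi_{[0,s)}\rVert_{\overline{X}}\lesssim cs$ for $s<\beta$, which is all the argument requires.
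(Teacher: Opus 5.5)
Your proposal is correct. In the non-atomic case it follows exactly the paper's route: the paper gives no separate proof and presents the theorem as Theorem~\ref{Thm:CharComp} combined with Corollary~\ref{Cor:RepreDecays}, specifically \eqref{Cor:RepreDecays_iv}.

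That equivalence is stated only for non-atomic spaces, and the paper notes after the corollary that the restriction is necessary. So the one-line derivation does not by itself cover the completely atomic case, which the hypothesis ``resonant'' allows. Your separate atomic argument fills this in:
\begin{itemize}
\item For (ii)$\Rightarrow$(i), condition (b) is superfluous, since $\sigma_X A(n^{-1})=0$ once $n^{-1}<\beta$.
\item For (i)$\Rightarrow$(ii), the chain is sound: precompactness gives boundedness $M$ in $X$, hence $\sup_{f\in A}\lVert f\rVert_{L^\infty}\le M/\varphi_X(\beta)$, hence $\sup_{f\in A}\int_0^{n^{-1}} f^*\,d\lambda\le M/(n\varphi_X(\beta))$.
\end{itemize}

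You also correctly identify where the argument depends on the choice of $\lVert\cdot\rVert_{\overline{X}}$. You need $\lVert c\chi_{[0,s)}\rVert_{\overline{X}}\lesssim cs$ for $s<\beta$, which is what the paper asserts for the construction of \cite[Definition~3.1]{Pesa25} in the remark after Corollary~\ref{Cor:RepreDecays}. This is not a formality. For $X=\ell^\infty$ on $(\mathbb{N},m)$, the non-canonical representation $\overline{X}=L^\infty([0,\infty))$ is also valid. With it and $A=\{\chi_{\{0\}}\}\subseteq X_a$, condition (b) fails even though $A$ is compact.
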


The second result is a characterisation of convergence that combines Theorems~\ref{Thm:CharConv}, \ref{Thm:ConvAlt}, and \ref{Thm:measureConvChar} into a single less complicated statement. This simplification is due to the fact that we assume a~priori that $(\mathcal{R}, \mu)$ is resonant and that rearrangement-invariance implies both admissibility and the finiteness of $\varphi_X^{\max} = \varphi_X$.

\begin{theorem}
    Let $(\mathcal{R}, \mu)$ be resonant, let $\norm{\cdot}_X$ be an r.i.~quasi-Banach function norm, and let $X$ be the corresponding r.i.~quasi-Banach function space. Let $f_k$ be a sequence of functions in $X$, $f \in X$, and denote $A = \{ f_k; \; k \in \mathbb{N} \} \cup \{ f \}$. Then the following statements are equivalent:
    \begin{enumerate}
        \item $f_k \to f$ in $(X, \lVert \cdot \rVert_X)$.
        \item The following three conditions are all satisfied:
        \begin{enumerate}
            \item $f_k \to f$ in $(\mathcal{M}_0, \mu_{\loc} )$,
            \item $\lim_{n \to \infty} \sup_{f \in A} \lVert f^* \chi_{[0, n^{-1})}  \rVert_{\overline{X}} = 0$,
            \item $\lim_{n \to \infty} \delta_X A(n) = 0$.
        \end{enumerate}
        \item The following three conditions are all satisfied:
        \begin{enumerate}
            \item $f_k \to f$ in measure,
            \item $\lim_{n \to \infty} \sup_{f \in A} \lVert f^* \chi_{[0, n^{-1})}  \rVert_{\overline{X}} = 0$,
            \item $\lim_{n \to \infty} \sup_{f \in A} \lVert f^* \chi_{[n, \infty)}  \rVert_{\overline{X}} = 0$.
        \end{enumerate}
    \end{enumerate}
\end{theorem}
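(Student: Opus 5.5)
The plan is to obtain this theorem by combining Theorems~\ref{Thm:CharConv}, \ref{Thm:ConvAlt} and \ref{Thm:measureConvChar} with the representation formulas of Proposition~\ref{Prop:local_ri_equiv} and Corollary~\ref{Cor:RepreDecays}. Two preliminary facts about r.i.~spaces are needed. First, $\varphi_X^{\max}(t)=\varphi_X(t)=\lVert\chi_{[0,t)}\rVert_{\overline{X}}<\infty$ for every $t$ by \ref{P4} for $\lVert\cdot\rVert_{\overline{X}}$. Second, $\varphi_X^{\min}(t)=\varphi_X(t)>0$ for $t>0$ by \ref{Q1}, so $\lVert\cdot\rVert_X$ is admissible.

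\textbf{Caveat on $X_a$.} As with Theorem~\ref{Thm:CharConv}, the necessity direction cannot hold without the a~priori assumption $f_k\in X_a$. For example, in $L^\infty$ the constant sequence $f_k=1$ converges, yet $\lVert f^*\chi_{[0,n^{-1})}\rVert_{\overline{X}}=1$ for all $n$. I would therefore add, or read as implicit, the hypothesis $f_k\in X_a$ for the implications out of (i). Since $X_a$ is closed (Proposition~\ref{PropXaOrdId}), this also gives $f\in X_a$. The implications into (i) need no such assumption.

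\textbf{Translating the local condition.} For $t$ in the range of $\mu$, Proposition~\ref{Prop:local_ri_equiv} gives $\sigma_X g(t)=\lVert g^*\chi_{[0,t)}\rVert_{\overline{X}}$. Interchanging suprema, $\sigma_XA(t)=\sup_{g\in A}\sigma_Xg(t)=\sup_{g\in A}\lVert g^*\chi_{[0,t)}\rVert_{\overline{X}}$.
\begin{itemize}
\item In the non-atomic case this is exactly \eqref{Cor:RepreDecays_iv}.
\item In the completely atomic case, the implication from the representation condition to $\sigma_XA(n^{-1})\to0$ is trivial, since the left-hand side vanishes once $n^{-1}$ is smaller than the atom size.
\end{itemize}

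\textbf{(ii)$\Rightarrow$(i) and (iii)$\Rightarrow$(i).} Given (ii), the translation above yields condition \ref{Thm:CharConv_ii}\ref{Thm:CharConv_iib}, and Theorem~\ref{Thm:CharConv} (direction \ref{Thm:CharConv_ii}$\Rightarrow$\ref{Thm:CharConv_i}, which does not use $X_a$) gives convergence in $X$. Given (iii), the translation gives $\sup_{g\in A}\sigma_Xg(n^{-1})\to0$, and \eqref{Cor:RepreDecays_iii} gives $\sup_{g\in A}\delta_Xg(n)\to0$. Theorem~\ref{Thm:ConvAlt} then applies, its hypotheses being resonance and finiteness of $\varphi_X^{\max}$.

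\textbf{(i)$\Rightarrow$(ii) and (i)$\Rightarrow$(iii).} Theorem~\ref{Thm:CharConv} gives convergence in $\mu_{\loc}$, $\sigma_XA(n^{-1})\to0$ and $\delta_XA(n)\to0$. Theorem~\ref{Thm:measureConvChar}, using admissibility and resonance, gives convergence in measure. Since $\sup_{g\in A}\delta_Xg(n)\le\delta_XA(n)$, \eqref{Cor:RepreDecays_iii} yields (iii)(c). What remains is to pass from $\sigma_XA(n^{-1})\to0$ to the represented local condition.
\begin{itemize}
\item In the non-atomic case this follows from \eqref{Cor:RepreDecays_iv}.
\item In the completely atomic case with atoms of measure $\beta$, I would first observe that $\lvert g(x)\rvert\le\lVert g\rVert_X/\varphi_X(\beta)$ for every $g\in A$ and every atom $x$. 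Since $A$ is bounded in $X$ as a convergent sequence, this gives a uniform bound $\sup_{g\in A}g^*(0)\le M$. Then $\sup_{g\in A}\lVert g^*\chi_{[0,n^{-1})}\rVert_{\overline{X}}\le M\lVert\chi_{[0,n^{-1})}\rVert_{\overline{X}}$. By the construction of the canonical representation space (\cite[Definition~3.1]{Pesa25}), which on short intervals reduces to an $L^1$-type term, this tends to $0$.
\end{itemize}

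I expect this atomic step, together with the bookkeeping around the missing $X_a$ assumption, to be the main obstacle. It is the one place where the proof depends on the specific canonical choice of $\lVert\cdot\rVert_{\overline{X}}$ rather than on earlier theorems, and I would check it first against \cite[Definition~3.1]{Pesa25}.
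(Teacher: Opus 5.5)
Your proposal is correct and follows the paper's own argument. The paper gives no separate proof: it only says the result combines Theorems~\ref{Thm:CharConv}, \ref{Thm:ConvAlt} and \ref{Thm:measureConvChar} with Corollary~\ref{Cor:RepreDecays}, using that rearrangement-invariance gives admissibility and finiteness of $\varphi_X^{\max}=\varphi_X$.

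Your caveat is justified. As stated, the implications out of (i) fail without the hypothesis $f_k\in X_a$, as your $L^\infty$ example shows; that hypothesis is present in Theorem~\ref{Thm:CharConv} but was dropped here. Your boundedness argument for the completely atomic case of the local condition also supplies a step the paper leaves implicit, since \eqref{Cor:RepreDecays_iv} is stated only for non-atomic spaces.
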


\begin{remark}
    It holds that $f_n \to f$ in measure if and only if $(f_n - f)^*(t) \to 0$ for every $t \in (0, \infty)$.
\end{remark}

\bibliographystyle{dabbrv}
\bibliography{bibliography}
\end{document}